\newcommand{\acdef}[1]{\emph{\acl{#1}} \textup{(\acs{#1})}\acused{#1}}		
\newcommand{\afterhead}{.\;}		
\newcommand{\para}[1]{\medskip\paragraph{\textbf{#1\afterhead}}}
\newcommand{\EMAIL}[1]{\email{\href{mailto:#1}{#1}}}
\crefname{algo}{Algorithm}{Algorithms}
\crefname{assumption}{Assumption}{Assumptions}
\crefname{figure}{Fig.}{Figs.}
\crefname{model}{Model}{Models}
\theoremstyle{plain}
\newtheorem{theorem}{Theorem}		
\newtheorem{corollary}{Corollary}		
\newtheorem{lemma}{Lemma}		
\newtheorem{proposition}{Proposition}		
\newtheorem*{corollary*}{Corollary}		
\theoremstyle{definition}
\newtheorem*{example*}{Example}		
\theoremstyle{remark}
\newtheorem{remark}{Remark}
\newtheorem*{remark*}{Remark}
\newcommand{\draft}[1]{#1}		
\newcommand{\newmacro}[2]{\newcommand{#1}{\draft{#2}}}		
\newcommand{\newop}[2]{\DeclareMathOperator{#1}{\draft{#2}}}		
\DeclarePairedDelimiter{\bracks}{[}{]}		
\DeclarePairedDelimiter{\pospart}{[}{]_{+}}		
\DeclarePairedDelimiterX{\setof}[1]{\{}{\}}{#1}		
\DeclarePairedDelimiterX{\setdef}[2]{\{}{\}}{#1:#2}		
\DeclarePairedDelimiterXPP{\exclude}[1]{\mathopen{}\setminus}{\{}{\}}{}{#1}
\DeclareMathOperator{\dist}{dist}		
\DeclareMathOperator{\sign}{sgn}		
\newop{\simplex}{\Delta}		
\newcommand{\cf}{cf.\xspace}		
\newcommand{\ie}{i.e.,\xspace}		
\newcommand{\textpar}[1]{\textup(#1\textup)}		
\newcommand{\alt}[1]{#1'}		
\newcommand{\altalt}[1]{#1''}		
\newmacro{\dd}{\:d}		
\newmacro{\const}{c}		
\newmacro{\Const}{C}		
\newmacro{\coef}{\lambda}		
\newmacro{\param}{\theta}		
\newmacro{\params}{\Theta}		
\newmacro{\tstart}{0}		
\renewcommand{\time}{\draft{t}}		
\newmacro{\timealt}{s}		
\newmacro{\horizon}{T}		
\newmacro{\flow}{\phi}		
\DeclarePairedDelimiterXPP{\flowof}[2]{\flow_{#1}}{(}{)}{}{#2}		
\newop{\Nash}{NE}		
\newop{\brep}{br}		
\newop{\reg}{Reg}		
\newop{\val}{val}		
\newmacro{\players}{\mathcal{I}}		
\newmacro{\pure}{i}		
\newmacro{\purealt}{j}		
\newmacro{\purealtalt}{k}		
\newmacro{\nPures}{m}		
\newmacro{\pures}{\mathcal{A}}		
\newmacro{\strat}{\mathbf{x}}
\newmacro{\stratx}{x}		
\newmacro{\straty}{y}
\newmacro{\stratcl}{c_l}
\newmacro{\stratclalt}{\widetilde{c}_l}
\newmacro{\stratcr}{c_r}
\newmacro{\stratcralt}{\widetilde{c}_r}
\newmacro{\stratalt}{\alt\strat}		
\newmacro{\strataltalt}{\altalt\strat}		
\newmacro{\strats}{\mathcal{X}}		
\newmacro{\intstrats}{\strats^{\circle}}		
\newmacro{\eq}{p}		
\newmacro{\pay}{u}		
\newmacro{\payv}{v}		
\newmacro{\pot}{\Phi}		
\newmacro{\game}{\mathcal{G}}		
\newmacro{\gamefull}{\game(\pures,\payv)}		
\newmacro{\fingame}{\Gamma}		
\newmacro{\fingamefull}{\Gamma(\players,\pures,\pay)}		
\newmacro{\mixgame}{\simplex(\fingame)}
\newmacro{\mat}{M}		
\newmacro{\hmat}{H}		
\newmacro{\ones}{\mathbf{1}}		
\newmacro{\eye}{I}		
\newmacro{\zer}{\mathbf{0}}		
\newmacro{\ttop}{{\!\top\!}}		
\newmacro{\beforestart}{0}		
\newmacro{\start}{1}		
\newmacro{\afterstart}{2}		
\newmacro{\running}{\start,\afterstart,\dotsc}		
\newmacro{\run}{n}		
\newmacro{\runalt}{k}		
\newmacro{\nRuns}{T}		
\newmacro{\state}{\strat}		
\newmacro{\statealt}{\score}		
\renewcommand{\phi}{\varphi}
\newop{\RD}{RD}
\newmacro{\updmap}{F}
\newmacro{\bio}{\cramped{\updmap_{\mathclap{\step}}^{\mathrm{I}}}}
\newmacro{\econ}{\cramped{\updmap_{\mathclap{\step}}^{\mathrm{II}}}}
\newmacro{\learn}{\cramped{\updmap_{\mathclap{\step}}^{\mathrm{III}}}}
\newmacro{\econp}{\cramped{\updmap_{\mathclap{\; \step,p}}^{\mathrm{II}}}}
\newmacro{\econpp}{\cramped{\updmap_{\mathclap{\; \step,1-p}}^{\mathrm{II}}}}
\newmacro{\point}{x}
\newmacro{\bimap}{f}
\newmacro{\set}{\mathcal{S}}
\newmacro{\size}{z}
\newmacro{\score}{y}
\newmacro{\step}{\delta}
\newmacro{\switch}{\rho}
\newmacro{\gaina}{a}
\newmacro{\gainb}{b}
\newmacro{\gainc}{c}
\newmacro{\gaind}{d}
\newmacro{\csr}{\rho}
\newmacro{\iswitch}{r}
\newmacro{\payvec}{\pi}
\newcommand{\cm}{\color{black}}
\begin{document}


\title
[The emergence of chaos in population game dynamics]
{The emergence of chaos in population game dynamics\\ induced by comparisons}


\author
[J.~Bielawski]
{Jakub Bielawski$^1$}
\address{$^1$\,%
Department of Mathematics, Krakow University of Economics, Rakowicka 27, 31-510 Krak\'{o}w, Poland.}
\EMAIL{bielawsj@uek.krakow.pl}

\author
[\L .~Cholewa]
{\L ukasz Cholewa$^2$}
\address{$^2$\,%
Department of Statistics, Krakow University of Economics, Rakowicka 27, 31-510 Krak\'{o}w, Poland.}
\EMAIL{cholewal@uek.krakow.pl}

\author
[F.~Falniowski]
{Fryderyk Falniowski$^3$}
\address{$^3$\,%
Department of Mathematics, Krakow University of Economics, Rakowicka 27, 31-510 Krak\'{o}w, Poland.}
\EMAIL{falniowf@uek.krakow.pl}





\newacro{LHS}{left-hand side}
\newacro{RHS}{right-hand side}
\newacro{iid}[i.i.d.]{independent and identically distributed}
\newacro{lsc}[l.s.c.]{lower semi-continuous}
\newacro{whp}[w.h.p]{with high probability}
\newacro{wp1}[w.p.$1$]{with probability $1$}
\newacro{ODE}{ordinary differential equation}

\newacro{CCE}{coarse correlated equilibrium}
\newacroplural{CCE}[CCE]{coarse correlated equilibria}
\newacro{NE}{Nash equilibrium}
\newacroplural{NE}[NE]{Nash equilibria}
\newacro{ESS}{evolutionarily stable state}

\newacro{RD}{replicator dynamics}
\newacro{MWU}{multiplicative weights update}
\newacro{PPI}{pairwise proportional imitation}
\newacro{EW}{exponential\,/\,multiplicative weights}
\newacro{EXP3}{exponential-weights algorithm for exploration and exploitation}

\newacro{GAN}{generative adversarial network}

\keywords{Population games, imitative and innovative revision protocols, one-dimensional maps, Li-Yorke chaos}

\maketitle

\begin{abstract}
Precise description of population game dynamics introduced by revision protocols - an economic model describing the agent's propensity to switch to a better-performing strategy - is of importance in economics and social sciences in general. In this setting imitation of others and innovation are forces which drives the evolution of the economic system. As the continuous-time game dynamics is relatively well understood, the same cannot be said about revision driven dynamics in the discrete time.
We investigate the behavior of agents in a $2\times 2$ anti-coordination game with symmetric random matching and a unique mixed Nash equilibrium. 
We show that in the discrete time 
one can construct (either innovative or imitative) revision protocol and choose the length of a unit time interval between periods in a discrete time horizon ($\step$), under which the game dynamics is Li-Yorke chaotic, inducing complex and unpredictable behavior of the system, precluding stable predictions of equilibrium. This is in the stark contrast with the continuous case. Moreover, we reveal that this unpredictability is encoded in any imitative revision protocol. Furthermore, we show that for any such game there exists a perturbed pairwise proportional imitation protocol introducing chaotic behavior of the agents for sufficiently large $\step$. 
\end{abstract}

\section{Introduction}
\label{sec:introduction}
One of the most fundamental questions in the field of noncooperative game theory is to determine whether, and under what conditions, players eventually follow a close-to-equilibrium behavior through repeated interactions; that is, whether a dynamic process driven by the individual interests of agents leads to a rational outcome. Historically, this question has formed enormous interest in game dynamics that emerged with the birth of evolutionary game theory in the mid-1970s, followed by a surge in activity in economic theory in the 1990s, and more recently through various connections with machine learning and artificial intelligence in theoretical computer science \cite{cressman2003evolutionary,cressman2013stability,HS88,CBL06,San10,Wei95}.
In the economic context the dynamics of the game is usually derived from a set of microeconomic foundations that express the growth rate of a type (or strategy) in the population via a revision protocol (an economic model describing the agent's propensity to switch to a better-performing strategy), see e.g. \citet{FL98,Wei95,San10}.

A revision protocol, as it builds on the agent's propensity to switch to a better-performing strategy, has comparison encoded in its roots. A common feature of many revision protocols is the notion of imitation: the revising agent observes the behavior of a randomly selected individual and then switches to the strategy of the observed agent with a probability that may depend on the revising agent's current payoff, the payoff of the observed strategy, or both. Different models of imitation have been considered by \citet{bjornerstedt1996nash,BinSam97,schlag1998imitate,fudenberg2006imitation,fudenberg2008monotone,mertikopoulos2024nested,MV22} and many others, see e.g., \citet{San10,HLMS22}.
Nevertheless, the {\it primus inter pares} is undoubtedly a mechanism known as \acdef{PPI}, originally due to \citet{Hel92}\footnote{See also \citet{BinSam97} for a derivation via a related mechanism known as ``imitation driven by dissatisfaction'', complementing the ``imitation of success'';
for a comprehensive account, \cf \citet{San10}.} as in continuous time it gives economic foundations for one of the most widely studied dynamics of evolutionary game theory --- the \emph{replicator dynamics} of \citet{TJ78}.
\footnote{Originally derived as a model for the evolution of biological populations under selective pressure, the \acl{RD} was subsequently rederived in 90's in economic theory via pairwise proportional imitation and, at around the same time,
as the mean dynamics of a stimulus-response model known as the \acdef{EW} algorithm, \cf \citet{Vov90,LW94,ACBFS95} and \citet{Rus99,Rus99b}.
Although these dynamics are merged in continuous time, they give qualitatively different outcomes in the discrete time \cite{falniowski2024discrete} leading, for instance to chaotic behavior for a wide range of games for pairwise proportional imitation model.}

Under imitative protocols, agents obtain candidate strategies by observing the behavior of randomly chosen opponents. In settings where agents are aware of the full set of available strategies, one can assume that they choose candidate strategies directly, without having to see choices of others. Those strategies don't even have to be used in the population. These, so called innovative/direct protocols, generate dynamics that differ in number of important ways from these generated by imitative dynamics.  For instance, imitative dynamics in continuous time setting always eliminate strictly dominated strategies \cite{nachbar1990evolutionary,akin1980domination} while innovative dynamics do not do so \cite{sandholm2010pairwise,hofbauer2011survival,MV22}.  Nevertheless, recently \citet{MV22} showed that these perspectives are not so far from each other. 
This raises a number of questions for discrete time models:
Do these dynamics differ in the discrete time? Do direct (innovative) comparisons are less prone to chaos?
Although game dynamics introduced by revision protocols in continuous time is relatively well understood \cite{San10,MV22,mertikopoulos2018riemannian}, the same cannot be said for discrete time models.

Putting the problem on a wider ground, by studying the mean dynamics one can hope to obtain plausible predictions for the long-run behavior when the length of a unit time interval between periods in a discrete time horizon (denoted by $\step$)  is sufficiently small to justify the descent to continuous time.
However, since real-life modeling considerations often involve larger values of $\step$ we are led to the following natural questions:

\begin{center}
    \it To what extent the discrete-time model of revision protocols leads to qualitatively different outcomes than the continuous one? How predictable the game dynamics driven by comparisons we can expect to be? And do direct (innovative) comparison driven dynamics differ in predictability with those where imitation drives the evolution of the system?
\end{center}

{\bf Our contributions.} 
Somewhat surprisingly, we show that even in the class of potential games \cite{monderer1996potential}, the class of games with most reliable predictions in the continuous setting, and even for cases where agents are symmetric with only two actions at players' disposal, their behavior can always become chaotic. In particular, we show that for any symmetric random matching in a $2\times 2$ anti-coordination game {\cm with continuum of agents,} there exist revision protocols under which game dynamics is chaotic for sufficiently large $\step$.\footnote{In particular, we propose revision protocols imposing chaotic behavior of game dynamics for $\step=1$.} This applies both to imitative and innovative protocols. Therefore, in this manner, comparisons through imitation or innovation are showing a similar story. Nevertheless, we show that chaotic behavior can arise in a very natural way when agents imitate others. In fact, we show that chaotic behavior is an inevitable component of imitative dynamics in $2\times 2$ anti-coordination games when gains from playing competing (different/opposite) strategies are close to each other.
Finally, we show that for any $2\times 2$ anti-coordination game agents can use an imitative revision protocol, which can be seen as a (slight) modification of pairwise proportional model \eqref{eq:PPI}, implementing chaotic behavior and periodic orbits of any period. In this sense chaotic, unpredictable behavior can arise in very simple manner in game dynamics, which can be seen as a perturbation of the replicator dynamics in the discrete time setting. 

In the above, when speaking about chaotic behavior, we mean the notion of \emph{Li-Yorke chaos} \textendash\ as introduced in the seminal paper of \citet{liyorke} \textendash\ for which there exists an uncountable set of initial conditions that is \emph{scrambled}, \ie every pair of points in this set eventually comes arbitrarily close and then drifts apart again infinitely often.
In dynamical systems that we consider here, Li-Yorke chaos implies other features of chaotic behavior like positive topological entropy or the existence of a set on which one can detect sensitive dependence on initial conditions  \cite{Ruette}.
In this sense, the system is truly unpredictable. 

\para{Related work}

There is a significant number of recent results suggesting that complex, non-equilibrium behaviors of boundedly rational agents (employing learning rules) seems to be common rather than exceptional. For population games \citet{cressman2003evolutionary} (Example 2.3.3) was one of the first who suggested complex behavior even for simple games of discrete time evolutionary models.  
In this aspect, the seminal work of \citet{SatoFarmer_PNAS} showed analytically that even in a simple two-player zero-sum game of Rock-Paper-Scissors, the (symmetric) \acl{RD} exhibit Hamiltonian chaos.
\citet{sato2003coupled} subsequently extended this result to more general multiagent systems, opening the door to detecting chaos in many other games (in the continuous-time regime). 

More recently, \citet{becker2007dynamics} and \citet{geller2010microdynamics} revealed a chaotic behavior for Nash maps in games like matching pennies, while \citet{VANSTRIEN2008259} and \citet{VANSTRIEN2011262} showed that fictitious play also possesses rich periodic and chaotic behavior in a class of 3x3 games, including
Shapley's game and zero-sum dynamics.
In a similar vein, \citet{Soda14} showed that  the replicator dynamics are Poincar\'{e} recurrent in zero-sum games, a result which was subsequently generalized to the so-called ``follow-the-regularized-leader'' (FTRL) dynamics \cite{mertikopoulos2017cycles}, even in more general classes of games \cite{LMB23-CDC};
see also \cite{MV22,HMC21,BM23,MHC24} for a range of results exhibiting convergence to limit cycles and other non-trivial attractors.

There is growing evidence that a class of algorithms from behavioral game theory known as experience-weighted attraction (EWA) also exhibits chaotic behavior for two-agent games with many strategies in a large parameter space \cite{GallaFarmer_PNAS2013}, or in games with many agents \cite{GallaFarmer_ScientificReport18}.
In particular, \citet{2017arXiv170109043P} showed experimentally that EWA leads to limit cycles and high-dimensional chaos in two-agent games with negatively correlated payoffs.
Chaotic behavior has also been detected in anti-coordination games under discrete-time dynamics similar in spirit to the  model discussed in this article. In more detail, \citet{vilone2011chaos} showed numerically the presence of Lyapunov chaos for some values of parameters in the Snowdrift game, while other works detected complex behavior in the Battle of the Sexes and Leader games \cite{pandit2018weight,mukhopadhyay2021replicator,mukhopadhyay2020periodic}.
All in all, careful examination suggests a complex behavioral landscape in many games (small or large) for which no single theoretical framework currently applies.

However, none of the above results implies chaos in the formal sense of Li-Yorke.
The first formal proof of Li-Yorke chaos was established for the 
{\cm multiplicative weights update (MWU)} algorithm in a symmetric two-player two-strategy congestion game by \citet{palaiopanos2017multiplicative}.
This result was generalized and strengthened (in the sense of positive topological entropy) for all two-agent
two-strategy congestion games~\cite{Thip18}.
In \cite{CFMP2019} topological chaos in nonatomic congestion game where agents use 
{\cm MWU} was established.
This result was then extended to FTRL with steep regularizers~\cite{BCFKMP21} and EWA algorithms \cite{bielawski2024memory}, but the resulting framework does not apply to the revision protocols case.
In arguably the main precursor of our work \citet{falniowski2024discrete} showed Li-Yorke chaos for \acl{PPI} for $2\times 2$ congestion games, which is in stark contrast with intra-species competition model, where we see convergence to Nash equilibrium. This suggests that the microeconomic model of revision protocols is closer to the machine learning counterpart than its (historical) predecessor from biology \cite{cressman2014replicator,chastain2014algorithms,argasinski2018evolutionary,argasinski2018interaction}.

In Section \ref{sec:prelim}, we formulate settings of our game and introduce the revision protocols. Then in Section \ref{sec:discrete} we review properties of (one-dimensional) dynamical systems and present and prove the main mathematical tool we use in this paper --- Proposition \ref{lem:chaos_cond}. In Section \ref{sec:thm1} we show that for any population anti-coordination game with two strategies there exist innovative (and imitative as well) revision protocols under which game dynamics will be chaotic (Theorem \ref{thm:anticoord-chaos}). Then, as revision protocols from Theorem \ref{thm:anticoord-chaos} can be seen as artificial ones, in Sections \ref{sec:thm2} and \ref{sec:thm3} for any $2\times 2$ anti-coordination  game we derive imitative revision protocols based on the PPI protocol, for which game dynamics is chaotic.
Proofs of all theorems (except Proposition \ref{lem:chaos_cond}) can be found in appendices.

\section{Settings}
\label{sec:prelim}
 In general, showing that a system exhibits chaotic behavior is a task of considerable difficulty and satisfactory theory exists only for low-dimensional systems. Thus, in our paper, we will consider games with a continuum of nonatomic players modeled by the unit interval $\players = [0, 1]$, with each player choosing (in a measurable way) an \emph{action} from the set of two available strategies  $ \pures \equiv \setof{A,B}$. 
Denoting by $\stratx_A \in [0,1]$ the mass of agents playing strategy $A$ (thus $\stratx_B=1-\stratx_A$ being $B$-strategists), the overall distribution of actions at any point in time will be specified by the \emph{state of the population} $\strat = (\stratx_A,\stratx_B)$, being a point in the unit simplex $\strats$. 
We will be interested in  \emph{symmetric random matching} \cite{HS98,Wei95,San10,HACM22},
 when two individuals are selected randomly from the population and are matched to play a symmetric two-player game with payoff matrix $\mat = (\mat_{\pure\purealt})_{\pure,\purealt\in\pures}$.
In this case, the payoff to agents playing $\pure\in\pures$ at state $\strat$ will be $\pay_{\pure}(\strat) = \sum_{\purealt\in\pures} \mat_{\pure\purealt} \stratx_{\purealt}$.
We will use
$\pay(\strat) = \sum_{\pure\in\pures} \stratx_{\pure} \pay_{\pure}(\strat)$ for the population's \emph{mean payoff} at state $\strat\in\strats$,
$\payv(\strat) = (\pay_{A}(\strat),\pay_{B}(\strat))$ for the associated \emph{payoff vector} at state $\strat$,
and
we will refer to the tuple $\game \equiv \gamefull$ as a \emph{population game}.
Finally, a state $\strat\in\strats$ is a Nash equilibrium of the game $\game$ if every strategy in use earns a maximal payoff (equivalently, each agent in population chooses an optimal strategy with respect to the choices of others).

{\bf Game.} In this setting, we will focus on a random matching scenario induced by $2\times 2$ symmetric anti-coordination games with 
payoff matrix
\begin{equation}
\label{eq:game}
\tag{game}
\begin{array}{l|cc}
	&A	&B\\
\hline
A	&\gaina		&\gainb\\
B	&\gainc	& \gaind
\end{array}
\end{equation}
where $\gaina,\gainb,\gainc,\gaind\in\mathbb{R}$ and $a<c$, $d<b$. 
Thus,
\begin{equation}\label{payoffs}
 \pay_A(\strat)=(\gaina-\gainb)\stratx_A+\gainb;\qquad \pay_B(\strat)=(\gainc-\gaind)\stratx_A+\gaind.
\end{equation}
The unique Nash equilibrium of this game is $\mathbf{p}=(p,1-p)\in \strats$, where the {\cm fraction} of $A$-strategists is given by 
 \begin{equation} \label{eq:eq} 
 \eq=\frac{\gainb-\gaind}{\gainc-\gaina+\gainb-\gaind}.
 \tag{eq}
 \end{equation}

{\bf Revision protocols for \eqref{eq:game}.} How do agents behave (choose actions) in this game? The model we consider has its roots in the mass-action interpretation of game theory and, more precisely, the theory of revision protocols \cite{San10, Wei95}, which gives microeconomic foundations of evolutionary game theory. Referring to the textbook of \cite{San10} for the details of the general model, and \cite{falniowski2024discrete} for its discrete counterpart, suppose that each agent occasionally receives an opportunity to switch actions \textendash\ say, based on the rings of a Poisson alarm clock \textendash\  and, at such moments, they reconsider their choice of action by comparing its payoff to that of a randomly chosen individual in the population.
A \emph{revision protocol} of this kind is defined by specifying the \emph{conditional switch rate} $\switch_{\pure\purealt}(\strat)$ at which a revising $\pure$-strategist switches to strategy $\purealt$ when the population is at state $\strat\in\strats$.
Usually revision protocols belong to one of two most prominent (and natural) types of revision protocols: imitative and innovative (also known as direct) revision protocols.

\noindent {\bf Imitative protocols.} 
The archetypical example of imitative revision protocol is the \acdef{PPI} protocol of \citet{Hel92}, as described by the switch rate functions
\begin{equation}
\label{eq:PPI}
\tag{PPI}
\switch_{\pure\purealt}(\strat)
	= \stratx_{\purealt} \pospart{\pay_{\purealt}(\strat) - \pay_{\pure}(\strat)},
\end{equation}
where $[d]_+ = \max\{d,0\}$.
Under this protocol, {\cm each agent} 
first observes the action of a randomly selected opponent, so a $\purealt$-strategist is observed with probability $\stratx_{\purealt}$ when the population is at state $\strat\in\strats$.
Then, if the payoff of the incumbent strategy $\pure\in\pures$ is lower than that of the benchmark strategy $\purealt$, the agent imitates the selected agent with probability proportional to the payoff difference $\pay_{\purealt}(\strat) - \pay_{\pure}(\strat)$;
otherwise, the revising agent skips the revision opportunity and sticks to their current action.
In general, an \emph{imitative protocol} is any protocol, where agents behave according to the revision protocol
\begin{equation}
\label{eq:imitation}
\tag{imit}
\switch_{\pure\purealt}(\strat)=\stratx_{\purealt} \iswitch_{\pure\purealt} (\strat) 
\end{equation}
described by the {\it conditional imitation rate} $\iswitch_{\pure\purealt}\colon \strats \to \mathbb{R}$, which is assumed to be Lipschitz continuous and the net conditional imitation rates are monotone, that is
\begin{equation}\label{eq:net_cond_imit_rates}
 \pay_{\purealt}\geq \pay_{\pure} \;\Leftrightarrow \; \iswitch_{k\purealt}-\iswitch_{\purealt k}\geq \iswitch_{k\pure}-\iswitch_{\pure k}\;\;\forall \pure,\purealt,k.
\end{equation}
{\it Imitation} comes from the fact, that an agent can switch only to a strategy, which is already used by part of the population (thus, he imitates). It is worth mentioning that in any imitative dynamics per-capita growth rates of pure strategies  are ordered as their payoffs (see \cite{nachbar1990evolutionary,Vio15,MV22}).

\noindent {\bf Innovative protocols.} In innovative (direct) protocols 
{\cm all} agents compare their payoffs with payoffs resulting from other strategies (used or not), changing it only when the payoff from the benchmark strategy is more profitable, that is when
 \begin{equation}
\label{eq:paircomp}
\tag{innov}
sgn (\switch_{\pure\purealt}(\strat))=sgn ([\pay_{\purealt}(\strat)-\pay_{\pure}(\strat)]_+). 
\end{equation}
{\cm Under these protocols agents are assumed to choose candidate strategies directly, and a strategy's popularity does not directly influence the probability with which it is considered by a revising agent.}\footnote{\cm Although there is no general definition of innovative dynamics it usually refers to dynamics in which a better strategy may increase its share even if no agent is currently using it. Nevertheless, the condition \eqref{eq:paircomp} is quite restrictive as, for instance, the best response dynamics \cite{gilboa1991social} may not fulfill it if agents have more than two available actions.}
A well-known example of innovative {\cm (non-imitative)} revision protocols is the pairwise comparison protocol \cite{San10, Smi84}, where
\begin{equation} \label{eq:Smithprot}\switch_{\pure\purealt}(\strat)= \pospart{\pay_{\purealt}(\strat) - \pay_{\pure}(\strat)}. \tag{PC} \end{equation}
for the others see \cite{San10,San10b,MV22}.


{\bf Dynamics.} By specifying conditional switch rates $\switch_{\pure\purealt}(\strat)$ we are able to describe how agents choose their actions and thus derive the formula for game dynamics driven by comparisons. In this case, the population share of agents switching from strategy $\pure$ to strategy $\purealt$ over a small interval of time $\step$ ({\cm length of} a unit time interval between periods in a discrete time horizon) will be be $\stratx_{\pure} \switch_{\pure\purealt} \step$, leading to the {\it inflow-outflow} equation
\begin{equation}
\label{eq:in-out-gen}
\stratx_{\pure}(\time+\step)
	= \stratx_{\pure}(\time)
	+ \step \bracks*{
		\sum_{\purealt\neq\pure} \stratx_{\purealt}(\time) \switch_{\purealt\pure}(\strat(\time))
		- \stratx_{\pure}(\time) \sum_{\purealt\neq\pure} \switch_{\pure\purealt}(\strat(\time))
		}.
\end{equation}
In particular, the game dynamics can be described by the formula  for {\cm the population share of} $A$-strategists, which is given by
\begin{equation}
\label{eq:in-out}
\tag{in-out}
\stratx(\time+\step)= \stratx(\time) +\step\left[(1-\stratx(\time))\switch_{BA}(\stratx(\time))-\stratx(\time) \switch_{AB}(\stratx (\time))\right]
\end{equation}
where $\stratx=\stratx_A$ is the $A$-strategists {\cm fraction}.

Therefore, for the 2-strategy population game \eqref{eq:game} the dynamics given by \eqref{eq:in-out} is boiled down to the dynamics of the unit interval map
\begin{equation}
\tag{map}
\label{eq:updmap} \updmap(\stratx)=\stratx +\step\left[(1-\stratx)\switch_{BA}(\stratx)-\stratx \switch_{AB}(\stratx)\right],
\end{equation}
where $\stratx$ is the population share of $A$-strategists and $1-\stratx$ is the population share of $B$-strategists.\footnote{Although payoff functions, conditional switch rates, conditional imitation rates and the map $\updmap$ itself depend on $\strat=(\stratx,1-\stratx)$, to simplify the notation, in the rest of the article we will treat all functions as dependent only on the population share $\stratx $.} 

This system admits only unique interior fixed point $\eq$ given by \eqref{eq:eq} both in game dynamics introduced by \eqref{eq:paircomp} and by \eqref{eq:imitation}.
In addition, any imitative game dynamics has two other fixed points: $0$ and $1$.

\section{Review on discrete-time dynamical systems}
\label{sec:discrete}
Before discussing game dynamics in detail, we give a short overview of concepts from dynamical systems and present a crucial fact which will be applied frequently in proofs of main theorems of the paper. 
Let $(X,\dist)$ be a nonempty compact metric space, and 
let $f\colon X\to X$ be a continuous map, that is, let $(X,f)$ be a dynamical system. 
We will be interested in the long-term behavior, with special emphasize on the limiting behavior of the system. To this aim we introduce few ideas. 
A fixed point $x$ of a dynamical system $(X,f)$ is called
\begin{itemize}
    \item attracting, if there is an open neighborhood $U\subset X$ of $x$ such that $f(U)\subset U$ and for every $y\in U$, we have $\lim_{n\to\infty}f^n(y)=x$, where $f^n$ is a composition of the map $f$ with itself $n$-times;
    \item repelling, if there is an open neighborhood $U\subset X$ of $x$ such that for every $y\in U$, $y\neq x$, there exists $n\in\mathbb{N}$ such that $f^n(y)\notin U$.
\end{itemize}

As we will study maps of the unit interval, the fixed point $\stratx \in \mathcal{I} =[0,1]$ is attracting if $|f'(\stratx)| < 1$ (or when the derivative doesn't exist, both one-sided derivatives fulfill this condition), and  $\stratx$ is repelling when $|f'(\stratx)| > 1$ (or when the derivative doesn't exist, both one-sided derivatives fulfill this condition). If  $|f'(\stratx)| =1$, or if the condition is met only for one of one-sided derivatives, we need more information.

An orbit $(f^n(\stratx))$ is called \emph{periodic} of period $T$ if $f^{n+T}(\stratx)=f^n(\stratx)$ for any $n\in \mathbb{N}$. The smallest such $T$ is called the period of $\stratx$.
The periodic orbit is called attracting, if $\stratx$ is an attracting fixed point of $(X,f^T)$, and
 repelling, if $\stratx$ is a repelling fixed point of $(X,f^T)$.

On the antipodes of convergence to the fixed point lays chaotic behavior. The most widely used definition of chaos is due to \citet{liyorke}:
A pair of points $\stratx,\stratx'\in X$ is said to be \emph{scrambled} \textendash\ or a \emph{Li\textendash Yorke pair} \textendash\ if
\begin{equation}
\label{eq:chaos}
\liminf_{\run\to\infty} \dist(f^{\run}(\stratx),f^{\run}(\stratx'))
	= 0
	\quad
	\text{and}
	\quad
\limsup_{\run\to\infty} \dist(f^{\run}(\stratx),f^{\run}(\stratx'))
	> 0.
\end{equation}
We then say that $(X,f)$ is \emph{chaotic} \textpar{in the Li\textendash Yorke sense} if it admits an uncountable \emph{scrambled set}, \ie a set $\set\subseteq X$ such that every pair of distinct points $\stratx,\stratx'\in\set$ is scrambled.


\vskip 0.2in

In this note we will be interested in the long-term dynamics of one-dimensional dynamical systems. For such systems Li-Yorke chaos implies other (stronger) notions of chaos like positive topological entropy or existence of the subset of the space with sensitive dependence on initial conditions and Devaney chaos \cite{Ruette}. Moreover, due to Li-Yorke theorem \cite{liyorke,li1982odd}, it can be shown by finding a periodic orbit of an odd period.

Now, we present a simple observation that will be used many times in the following sections.

\begin{proposition}\label{lem:chaos_cond}
Let $f\colon[0,1]\to[0,1]$ be a continuous map. If there exist points $0\leq z_l<z_r\leq 1$ such that $f^2(z_l)>z_r$ and either
    \begin{multicols}{2}
    \begin{enumerate}[$(1)$]
		\item\label{lem:chaos_cond1} $f(z_r)<z_l$,
            \item\label{lem:chaos_cond2} $f(z_l)>z_r$,
	\end{enumerate}
    \end{multicols}
    hold    or
    \begin{multicols}{2}
    \begin{enumerate}[$(1')$]
		\item\label{lem:chaos_condA} $f(z_r)>z_l$,
            \item\label{lem:chaos_condB} $f(z_l)<z_l$,
	\end{enumerate}
    \end{multicols}
    hold, then $f(x)<x<f^3(x)$ for some point $x\in(z_l,z_r)$.
\end{proposition}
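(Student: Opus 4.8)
The plan is to reduce both hypothesis configurations to a single, explicit choice of the point $x$, and to obtain the two required inequalities directly from the hypotheses rather than through a delicate sign analysis of $f^3(x)-x$. The key observation is that the target inequalities $f(x)<x<f^3(x)$ become transparent if one chooses $x=b$ to be a \emph{preimage of $z_l$}, i.e.\ a point with $f(b)=z_l$. For such a $b$ the first three iterates are forced along the orbit $b\mapsto z_l\mapsto f(z_l)\mapsto f^2(z_l)$, so that $f(b)=z_l$, $f^2(b)=f(z_l)$, and, crucially, $f^3(b)=f^2(z_l)$. This is exactly where the standing hypothesis $f^2(z_l)>z_r$ enters: it pins down $f^3(b)$ with no further knowledge of $f$.

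First I would establish the existence of such a $b$ inside the open interval $(z_l,z_r)$ by the intermediate value theorem applied to $\eta(x)=f(x)-z_l$. Here the two configurations play identical roles, only with opposite endpoint signs. In the first configuration, $f(z_l)>z_r>z_l$ gives $\eta(z_l)>0$ while $f(z_r)<z_l$ gives $\eta(z_r)<0$; in the second, $f(z_l)<z_l$ gives $\eta(z_l)<0$ while $f(z_r)>z_l$ gives $\eta(z_r)>0$. Either way $\eta$ changes sign strictly on $[z_l,z_r]$, so continuity yields $b\in(z_l,z_r)$ with $f(b)=z_l$, and the strictness of the inequalities keeps $b$ off both endpoints.

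Then I would read off the conclusion. Since $b>z_l$ we have $f(b)=z_l<b$, which is the left inequality. For the right inequality, $f^2(b)=f(z_l)$ and hence $f^3(b)=f\big(f(z_l)\big)=f^2(z_l)>z_r>b$, using $f^2(z_l)>z_r$ together with $b<z_r$; this gives $b<f^3(b)$. Combining, $f(b)<b<f^3(b)$ for the point $x=b\in(z_l,z_r)$, as required.

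I do not expect a genuine obstacle once the right point is identified: the whole difficulty is the initial guess that $x$ should be a preimage of the left endpoint $z_l$, which is what makes the third iterate collapse onto the controlled quantity $f^2(z_l)$ and simultaneously unifies the two cases. The only points demanding a little care are verifying that $b$ lies strictly inside $(z_l,z_r)$ (so that $f(b)=z_l<b$ is strict) and checking that each of the two configurations produces the stated sign change of $\eta$ at the endpoints.
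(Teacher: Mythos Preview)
Your proposal is correct and follows essentially the same approach as the paper: both arguments find a preimage $x\in(z_l,z_r)$ of $z_l$ via the intermediate value theorem (with each hypothesis configuration supplying the needed sign change), and then read off $f(x)=z_l<x$ and $f^3(x)=f^2(z_l)>z_r>x$ directly. The presentations differ only cosmetically in how the IVT step is phrased.
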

    \begin{proof}
First we will show that there exists  $x\in(z_l,z_r)$ such that $f(x)=z_l$.
Under conditions \ref{lem:chaos_cond1} and \ref{lem:chaos_cond2} we get the inequalities
    \[
    f(z_l)>z_r>z_l>f(z_r).
    \]
    Hence, by the continuity of $f$, we obtain
    \[
    f\left((z_l,z_r)\right)\supset\left((f(z_r),f(z_l)\right)\supset[z_l,z_r],
    \]
    so there is a point $x\in(z_l,z_r)$ such that $f(x)=z_l$. 
Similarly, if we assume conditions  \ref{lem:chaos_condA} and \ref{lem:chaos_condB}, then $z_l\in (f(z_l,z_r))$ and by continuity of $f$ there exists $x\in (z_l,z_r)$ such that $f(x)=z_l$.

   Finally, the condition~$f^2(z_l)>z_r$ ensures that
    \[
    f^3(x)=f^2(z_l)>z_r>x,
    \]
    which completes the proof.
     \end{proof}

In this article we will usually deal with bimodal maps with critical points $\stratcl, \stratcr \in (0,1)$. Thus, we will use Proposition \ref{lem:chaos_cond} putting $z_l=\stratcl$ and $z_r=\stratcr$. 
In Figure \ref{Cobweb} we show how the set of assumptions of Proposition \ref{lem:chaos_cond} impacts the dynamics then. 
In particular, when conditions of Proposition \ref{lem:chaos_cond} are satisfied we face unpredictability and chaos (cases {\bf (b)} and {\bf (d)} on Figure \ref{Cobweb}).

The result of Proposition \ref{lem:chaos_cond} will be crucial in the proof of chaotic behavior of the agents in our model: by Li-Misiurewicz-Panigiani-Yorke theorem \cite{li1982odd}, Proposition \ref{lem:chaos_cond} implies the existence of a periodic orbit of period 3. By the Sharkovsky Theorem (\cite{sha}), existence of a periodic orbit of period 3 guarantees existence of periodic orbits of all periods, and by \cite{liyorke} it implies Li-Yorke chaos.

\begin{remark}
As already mentioned, to meet assumptions of Proposition \ref{lem:chaos_cond} it will be (usually) convenient to assume that we deal with bimodal maps (and fulfill conditions for critical points). Although the dynamics of unimodal maps is quite well understood \cite{Ruette,lyubich2002almost}, dynamics of bimodal is less understood \cite{milnor2000entropy}, and gives place for behaviors impossible for unimodal maps \cite{BCFKMP21}. 
\end{remark}
\begin{remark}
The conditions from Proposition \ref{lem:chaos_cond} are going to be met for game dynamics introduced by revision protocols which we will construct. In particular, conditions \ref{lem:chaos_condA} and \ref{lem:chaos_condB} together with $f^2(z_l)>z_r$ will be fulfilled for the game dynamics introduced by innovative revision protocols \eqref{eq:paircomp}, while \ref{lem:chaos_cond1} and \ref{lem:chaos_cond2} together with $f^2(z_l)>z_r$ 
will hold for game dynamics driven by imitative ones \eqref{eq:imitation} for sufficiently large $\step$.
\end{remark}


\vskip 0.2in

\begin{figure}
\centering
\begin{subfigure}{0.48\textwidth}
 \includegraphics[width=0.49\textwidth]{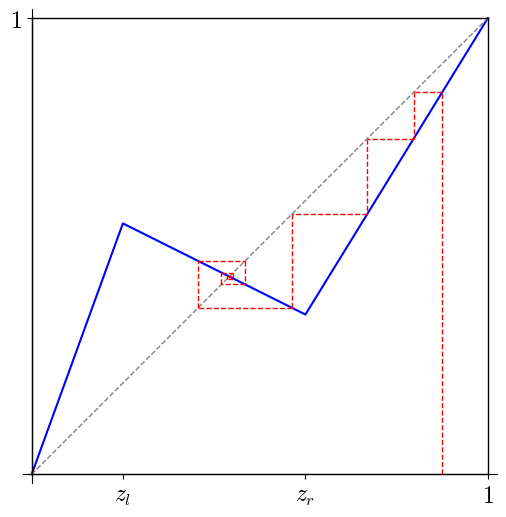}
\hfill
 \includegraphics[width=0.49\textwidth]{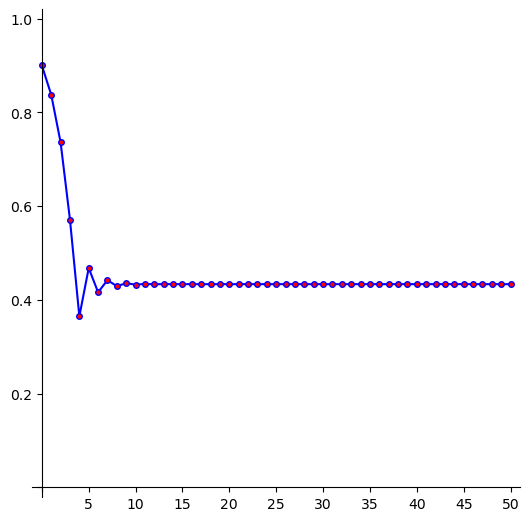}
 \caption{$f(z_l)=0.55$, $f(z_r)=0.35$}
\end{subfigure}
\hfill
\begin{subfigure}{0.48\textwidth}
 \includegraphics[width=0.49\textwidth]{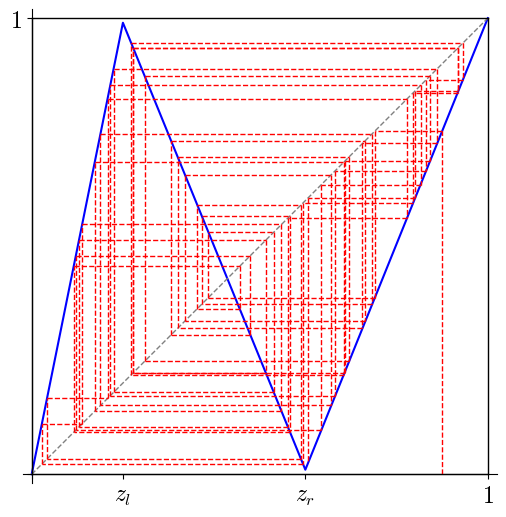}
\hfill
 \includegraphics[width=0.49\textwidth]{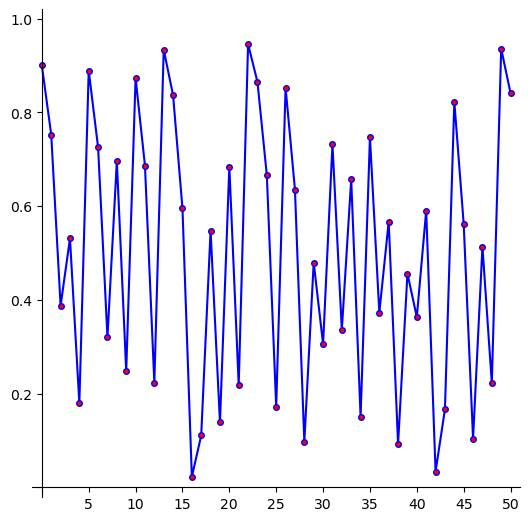}
 \caption{$f(z_l)=0.99$, $f(z_r)=0.01$}
\end{subfigure}
\\
\begin{subfigure}{0.48\textwidth}
 \includegraphics[width=0.49\textwidth]{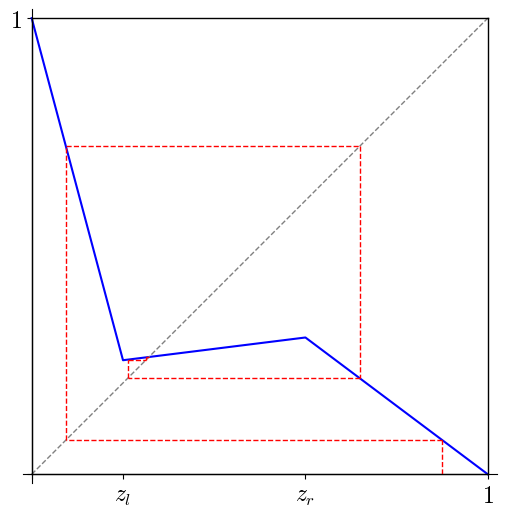}
\hfill
 \includegraphics[width=0.49\textwidth]{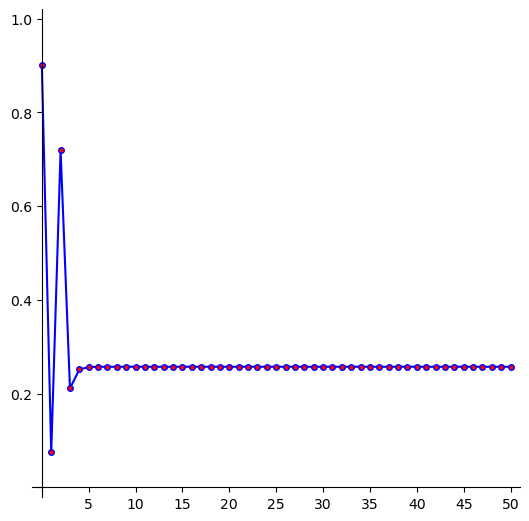}
 \caption{$f(z_l)=0.25$, $f(z_r)=0.3$}
\end{subfigure}
\hfill
\begin{subfigure}{0.48\textwidth}
 \includegraphics[width=0.49\textwidth]{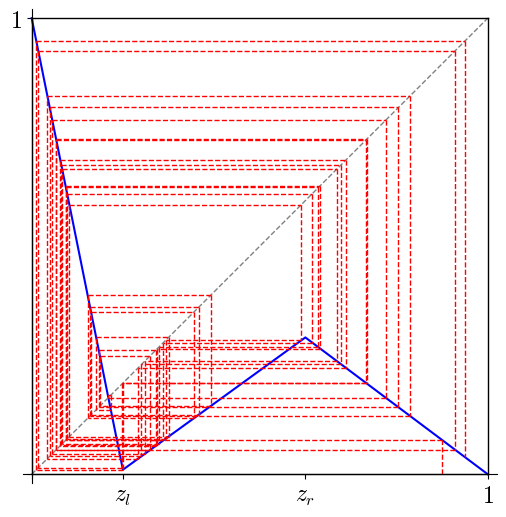}
\hfill
 \includegraphics[width=0.49\textwidth]{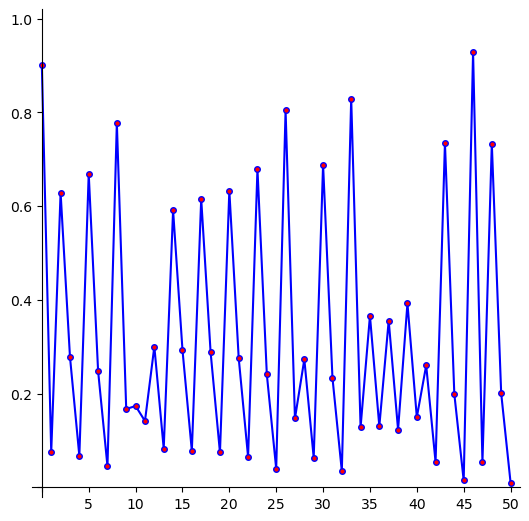}
 \caption{$f(z_l)=0.01$, $f(z_r)=0.3$}
\end{subfigure}
\caption{Diagrams illustrating conditions \ref{lem:chaos_cond1} and \ref{lem:chaos_cond2} of Proposition \ref{lem:chaos_cond} are in the first row. Diagrams illustrating conditions \ref{lem:chaos_condA} and \ref{lem:chaos_condB} are in the second row. Cobweb diagrams (columns 1. and 3.) and the first 50 iterations of a starting point (columns 2. and 4.). Here we test conditions of Proposition \ref{lem:chaos_cond} for $z_l=0.2$ and $z_r=0.6$ when they are critical points of piecewise linear maps:
{\bf (a)} None of the conditions is satisfied; 
{\bf (c)} The condition \ref{lem:chaos_condA} holds; 
{[\bf (b)}~and~{\bf (d)]} Conditions of Proposition \ref{lem:chaos_cond} are satisfied; we see a chaotic behavior of the trajectory of the starting point.}\label{Cobweb}
\end{figure}

\begin{table*}[t]
\centering
\caption{\cm Description of the model}
\label{tableparameters}
\begin{tabularx}{\textwidth}{p{3.5cm}X} 
    \toprule
{\cm Notation} & {\cm Description}  \cr
    \midrule
{\cm $\pures \equiv \setof{A,B}$} & {\cm  set of the strategies}\cr
{\cm the unit simplex $\strats$} & {\cm set of all possible states of the population (which consists of continuum of nonatomic agents)}\cr
{\cm $\strat = (\stratx_A,\stratx_B) \in \strats$} & {\cm state of the population}\cr
{\cm $\mat = (\mat_{\pure\purealt})_{\pure,\purealt\in\pures}$} & {\cm payoff matrix given by \eqref{eq:game}}\cr
{\cm $\pay_{\pure}(\strat) = \sum_{\purealt\in\pures} \mat_{\pure\purealt} \stratx_{\purealt}$} & {\cm payoff of agents playing $\pure\in\pures$ at state $\strat \in \strats$}\cr
{\cm $\payv(\strat) = (\pay_{A}(\strat),\pay_{B}(\strat))$} & {\cm   payoff vector at state $\strat \in \strats$}\cr
{\cm $\game \equiv \gamefull$} & {\cm population game}\cr
{\cm $\mathbf{p}=(p,1-p)$} & {\cm unique Nash equilibrium of \eqref{eq:game}, where $p$ is given by \eqref{eq:eq}}\cr
{\cm $\switch_{\pure\purealt}(\strat) \in [0,\infty)$} & {\cm conditional switch rate at which a revising $\pure$-strategist switches to an another strategy $\purealt$ when the population is at state $\strat\in\strats$}\cr
{\cm $\iswitch_{\pure\purealt}(\strat) \in [0,\infty)$} & {\cm  for imitative protocols, given by \eqref{eq:imitation}, conditional imitation rate at which a revising $\pure$-strategist switches to strategy $\purealt$ of an another individual when the population is at state $\strat\in\strats$}\cr
{\cm $\step\in (0,\infty)$} & {\cm length of a unit time interval between periods in a discrete time horizon, bounded due to the requirement for the dynamics introduced by \eqref{eq:updmap} being well-defined, in our considerations usually $\delta\leq 1$}\cr
{\cm $\updmap \colon [0,1] \to [0,1]$} & {\cm map describing the dynamics of  \eqref{eq:game} introduced by \eqref{eq:in-out}; defined by \eqref{eq:updmap}}\cr
{\cm $\eta,\xi \in (0,\infty)$} & {\cm multiplicative constants that perturb the conditional imitation rates in Sections \ref{sec:thm2} and \ref{sec:thm3}}\cr
{\cm $\gamma\in[0,1]\backslash\{p\}$} & {\cm  the level at which one of the conditional imitation rates is truncated in Section \ref{sec:thm3}}\cr
    \midrule
\end{tabularx}
\end{table*}

\section{Chaos for innovative dynamics}
\label{sec:thm1}

We will show that in discrete time setting the revision protocol driven game dynamics can be far from simple even in the simplest nontrivial case. For concreteness, we will focus on showing the extreme asymptotic behavior in comparison to the convergent landscape --- deterministic chaos. We will show that if only the length of a unit time interval between periods $\step$ is large enough, one can find revision protocols under which \eqref{eq:in-out} equation determines chaotic behavior. Why do we point to this extreme? If the dynamics is globally convergent, the population's initial state can be eventually forgotten, and all initializations ultimately settle down to the same state (or two states for coordination games). Instead, if the dynamics is chaotic, even arbitrarily small differences in the population's initial state may lead to drastically different behavior. As such, convergence and chaos can be seen as antithetical to each other.  
The main result of this note is the following theorem.

\begin{theorem} \label{thm:anticoord-chaos}
    For every $2\times 2$ anti-coordination game \eqref{eq:game} there exist revision protocol and $\step$ such that the interior Nash equilibrium $\eq$ is repelling and game dynamics is chaotic in the sense of Li and Yorke with periodic orbits of any period. 
\end{theorem}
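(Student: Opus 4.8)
The plan is to reduce the entire statement to a single application of Proposition \ref{lem:chaos_cond}. Once we exhibit a valid revision protocol and a step $\step$ for which the induced map \eqref{eq:updmap} satisfies the hypotheses of that proposition, we obtain a point with $\updmap(\stratx)<\stratx<\updmap^3(\stratx)$; by the Li--Misiurewicz--Panigiani--Yorke theorem \cite{li1982odd} this forces a period-three orbit, Sharkovsky's theorem \cite{sha} then yields periodic orbits of every period, and \cite{liyorke} gives an uncountable scrambled set, i.e.\ Li--Yorke chaos. So it remains to construct, for the given \eqref{eq:game}, a protocol and $\step$ such that (i) $\updmap$ maps $[0,1]$ into itself, (ii) $\eq$ is repelling, and (iii) the hypotheses of Proposition \ref{lem:chaos_cond} hold for some $0<z_l<z_r<1$. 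The structural fact I would lean on is that, by \eqref{payoffs}, $\pay_A(\stratx)-\pay_B(\stratx)=-(\gainc-\gaina+\gainb-\gaind)(\stratx-\eq)$, so $A$ is the strictly better reply for $\stratx<\eq$ and $B$ for $\stratx>\eq$. Consequently, under any innovative protocol \eqref{eq:paircomp} the map satisfies $\updmap(\stratx)>\stratx$ on $(0,\eq)$, $\updmap(\stratx)<\stratx$ on $(\eq,1)$, and $\updmap(\eq)=\eq$.

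The key mechanism, and the resolution of the main obstacle, is that well-definedness and large overshoot are not in conflict here. On $(0,\eq)$ we have $\updmap(\stratx)=\stratx+\step(1-\stratx)\switch_{BA}(\stratx)$, so $\updmap(\stratx)\le 1$ is equivalent to $\step\switch_{BA}(\stratx)\le 1$, and $\step\switch_{BA}(\stratx)=1$ already sends $\stratx$ all the way to $\updmap(\stratx)=1$, because the inflow factor $(1-\stratx)$ vanishes at the boundary; symmetrically, on $(\eq,1)$ the bound $\step\switch_{AB}(\stratx)\le 1$ keeps $\updmap\ge 0$ while $\step\switch_{AB}(\stratx)=1$ drives $\updmap(\stratx)$ to $0$. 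This is precisely why naive rates such as \eqref{eq:Smithprot} escape $[0,1]$ for large flow, whereas a saturating choice does not. I would therefore take $\step=1$ and pick continuous switch rates with $\switch_{BA}(0)=1$ and $\switch_{BA}(\eq)=0$, together with a point $z_l\in(\eq,1)$ at which $\switch_{AB}(z_l)=1$, $\switch_{AB}(\eq)=0$, and $\switch_{AB}$ decreasing on $(z_l,1)$ so that $\updmap$ climbs back up after its minimum at $z_l$.

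With this choice the three hypotheses are immediate. First, $\updmap(z_l)=0<z_l$ gives condition \ref{lem:chaos_condB}. Second, $\updmap^2(z_l)=\updmap(0)=1$ exceeds any $z_r<1$, giving $\updmap^2(z_l)>z_r$. Third, since $\updmap$ rises from $0$ at $z_l$ to $\updmap(1)=1-\switch_{AB}(1)$, which I arrange to exceed $z_l$ by choosing $\switch_{AB}(1)<1-z_l$, there is $z_r\in(z_l,1)$ with $\updmap(z_r)>z_l$, i.e.\ condition \ref{lem:chaos_condA}. Repelling of $\eq$ I would secure by making $\switch_{BA}$ and $\switch_{AB}$ drop to $0$ steeply at $\eq$ (with one-sided slopes below $-2/(1-\eq)$ and above $2/\eq$, respectively), so that both one-sided derivatives of $\updmap$ at $\eq$ are below $-1$; since the interior fixed point of \eqref{eq:updmap} is unique, this finishes the innovative case.

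Finally, the imitative case is handled by the symmetric form \ref{lem:chaos_cond1}--\ref{lem:chaos_cond2} of the proposition. Here $\updmap(\stratx)=\stratx+\step\,\stratx(1-\stratx)\bigl(\iswitch_{BA}(\stratx)-\iswitch_{AB}(\stratx)\bigr)$, with the net rate $\iswitch_{BA}-\iswitch_{AB}$ positive on $(0,\eq)$ and negative on $(\eq,1)$ by the monotonicity requirement \eqref{eq:net_cond_imit_rates}, and now $0$ and $1$ are fixed. The same boundary-vanishing factors let an interior local maximum $z_l<\eq$ reach $\updmap(z_l)=1$ (where $\step\,z_l(\iswitch_{BA}-\iswitch_{AB})(z_l)=1$) and an interior local minimum $z_r>\eq$ reach $\updmap(z_r)=0$, which yields $\updmap(z_l)>z_r$, $\updmap(z_r)<z_l$ and $\updmap^2(z_l)=\updmap(1)=1>z_r$; this overshoot appears once $\step$ is taken large enough, up to the well-definedness bound, exactly as flagged in the second remark above. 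The one delicate point throughout is to confirm that the switch rates can be chosen continuous (Lipschitz, for the imitative rates $\iswitch$), sign-consistent with \eqref{eq:paircomp} resp.\ \eqref{eq:net_cond_imit_rates}, and saturating the relevant bound at the designated interior points while vanishing at the fixed points; this bookkeeping, rather than any single hard estimate, is the real work of the proof.
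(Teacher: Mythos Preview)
Your proposal is correct and follows essentially the same approach as the paper: reduce to Proposition~\ref{lem:chaos_cond} by choosing switch rates that saturate the well-definedness bound $\step\switch\le 1$ at an interior point (sending $\updmap$ to $0$ there) while $\updmap(0)=1$ (innovative) resp.\ $\updmap(z_l)=1$, $\updmap(z_r)=0$ (imitative), then invoke Li--Misiurewicz--Panigiani--Yorke and Sharkovsky. The paper carries this out with explicit piecewise-linear formulas and handles $\eq>\tfrac12$ via the conjugacy $\varphi(\stratx)=1-\stratx$, whereas your description is more conceptual and treats all $\eq$ uniformly; the only thing you leave implicit is the repelling check in the imitative case, but the same one-sided-derivative steepness argument you gave for the innovative case applies there as well.
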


\begin{remark}
    By the proof of Theorem \ref{thm:anticoord-chaos} we can point out that such revision protocol can be either innovative  \eqref{eq:paircomp} or imitative \eqref{eq:imitation}.
\end{remark}

 Theorem \ref{thm:anticoord-chaos} shows that even for simplest nontrivial population games, where agents use revision protocols in discrete time, game dynamics can be complex and unpredictable, with periodic orbits of any period. Nevertheless, this doesn't give the answer to how attractors of the system look like. Moreover, 
note that we are considering chaos in the sense of Li and Yorke, which is local in nature. The set on which chaotic behavior occurs, although uncountable, can be very small (in the sense of the Lebesgue measure). Therefore, even an analytical proof of the existence of an uncountable scrambled set does not ensure that chaos will be \textit{observable}; in the extreme case we can still see convergence to the interior Nash equilibrium $p$ for almost all points \cite{collet1980abundance,bruin2010lebesgue,misiurewicz2005ergodic,BCFKMP21}. 
For this reason we will also be interested in the stability of the point $p$.\footnote{For innovative dynamics this is a unique fixed point of the system, while imitative dynamics admit also 0 and 1 as fixed points, but these are always repelling.}

 For the game \eqref{eq:game} we can focus on the population share of $A$-strategists. Then from \eqref{eq:updmap} we get
\begin{equation}\label{eq:game_dynamics-PC}
	\updmap(\stratx)=\begin{cases}
		\stratx+\step(1-\stratx)\switch_{BA}(\stratx),& \text{ for }\stratx\in[0,\eq),\\
		\stratx(1-\step\switch_{AB}(\stratx)),& \text{ for }\stratx\in[\eq,1],
	\end{cases}
\end{equation}
for revision protocols given by \eqref{eq:paircomp}, and

\begin{equation}\label{eq:game_dynamics_imit}
\updmap(\stratx)=\stratx\big(1+\step(1-\stratx)h(\stratx)\big),
\end{equation}
with $h(\stratx)=\iswitch_{BA}(\stratx)-\iswitch_{AB}(\stratx)$,
for imitative revision protocols, given by \eqref{eq:imitation}.

We will show that for any anti-coordination game \eqref{eq:game} there exists an innovative revision protocol \eqref{eq:paircomp}, which (via \eqref{eq:in-out} equation and by \eqref{eq:updmap}) introduces Li-Yorke chaotic game dynamics with periodic orbits of any period. One can also show existence of an imitative one with these properties. As the proof for the imitative revision protocol relies on the construction for the innovative one, and can be interpreted as a slight modification of the construction presented in this section, we leave the construction of an imitative revision protocol to the Appendix \ref{app:imitative}.\footnote{Moreover, in Sections \ref{sec:thm2} and \ref{sec:thm3} we will show another way of constructing an imitative revision protocol, which is a perturbation of the protocol \eqref{eq:PPI}, with chaotic game dynamics.}
 Both these results come from the construction of piecewise linear bimodal (with two critical points) maps with desired properties. 
 Our construction leads to piecewise linear maps, as those maps constitute the simplest possible class of bimodal maps, which at the same time captures a large part of the aspects of their dynamics, see e.g. \citet{milnor2000entropy}. 
 The choice of a bimodal map is a consequence of observation that these maps arise naturally in the context of anti-coordination and congestion games, when we meet unpredictable and complex behavior of trajectories \cite{palaiopanos2017multiplicative,Thip18,CFMP2019,BCFKMP21,bielawski2024memory,falniowski2024discrete}.
We denote by $\mathcal{B}_L$ the family of piecewise linear bimodal maps of the form:
\begin{equation}\label{PLBmap}
	\bimap(x)=\begin{cases}
		\beta_1x+\alpha_1, &\text{for}\ x\in[0,\stratcl),\\
		\beta_2x+\alpha_2, &\text{for}\ x\in [\stratcl,\stratcr),\\
		\beta_3x+\alpha_3, &\text{for}\ x\in [\stratcr,1],
	\end{cases}
\end{equation}
where $0<\stratcl<\stratcr<1$, $\beta_1\beta_2<0$, $\beta_1\beta_3>0$ and $\alpha_1,\alpha_2,\alpha_3\in\mathbb{R}$. As we will focus on continuous maps, $\stratcl$ and $\stratcr$ will be left and right critical points respectively (see Figure \ref{Cobweb}).
 
 In the construction of the innovative revision protocol we will focus on bimodal maps which are decreasing on the first and the third lap (such maps appear i.e. in the pairwise comparison dynamics \eqref{eq:Smithprot}), that is $\beta_1,\beta_3<0$ and $\beta_2>0$. On the other hand imitative protocols have to lead to interval maps, for which $0$ and $1$ are fixed points (this is a general property of imitation \cite{San10}). Therefore, an appropriate bimodal map should be increasing on the first (and third) lap (thus, $\beta_1,\beta_3>0$, $\beta_2<0$). 
 The proofs of all results presented in this section are in Appendix \ref{app:proof-thm1}.

\subsection{ Skeleton of the proof of Theorem \ref{thm:anticoord-chaos} for an innovative protocol}
Let us consider a $2\times 2$ anti-coordination game $\game \equiv \gamefull$ defined by \eqref{eq:game} and denote by $p$ its unique Nash equilibrium.  First, existence of the revision protocol which introduces chaotic behavior for sufficiently large $\step$ for $\eq=\frac 12$ was shown already in \cite{falniowski2024discrete}.\footnote{We will extend this result in Section \ref{sec:thm2}, see Proposition \ref{thm:imitation12}.}
We will show the proof of Theorem \ref{thm:anticoord-chaos} for $p \neq \frac 12$ in three steps.

\vspace{0.2cm}

\noindent {\bf Step 1. Construction of a family of piecewise linear bimodal maps that satisfy the assumptions of Proposition ~\ref{lem:chaos_cond}.}
We are interested in revision protocols for which the map $\updmap$ from \eqref{eq:game_dynamics-PC} (or from \eqref{eq:game_dynamics_imit}) are continuous and bimodal with two critical points $\stratcl, \stratcr \in (0,1)$.
For innovative dynamics the formula in \eqref{eq:game_dynamics-PC} results in a continuous bimodal map $\updmap$ with local minimum at $\stratcl$ and local maximum at $\stratcr$, where $0<\stratcl<\stratcr<1$ (i.e., the map $\updmap$ is decreasing on intervals $(0,\stratcl)$ and $(\stratcr,1)$).

As we mentioned earlier we will restrict our considerations to the family $\mathcal{B}_L$ of piecewise linear bimodal maps.
For fixed  points $\stratcl,\stratcr\in(0,1)$ values of parameters $\beta_1$, $\beta_2$, $\beta_3$ 
for which the map  $\bimap$ from \eqref{PLBmap} satisfies the assumptions of Proposition ~\ref{lem:chaos_cond} is described in the following lemma. 

\begin{lemma}\label{cor:lin_bimodal1}
	Let $0<\stratcl<\stratcr<1$ and $\bimap\in\mathcal{B}_L$ be defined as
    \begin{equation}\label{eq:f_form}
	\bimap(x)=\begin{cases}
		\beta_1(x-\stratcl), &\text{for}\ x\in[0,\stratcl),\\
		\beta_2(x-\stratcl), &\text{for}\ x\in [\stratcl,\stratcr),\\
		\beta_3(x-\stratcr)+\beta_2(\stratcr-\stratcl), &\text{for}\ x\in [\stratcr,1],
	\end{cases}
\end{equation}
    where $\beta_1 \in \left[ -\frac{1}{\stratcl},-\frac{\stratcr}{\stratcl}\right)$, $\beta_2 \in \left(\frac{\stratcl}{\stratcr-\stratcl},\frac{1}{\stratcr-\stratcl}\right]$ and $\beta_3 \in \left[ -\frac{\stratcl}{1-\stratcr},0\right)$.
	Then $\bimap$ is a well-defined continuous bimodal map of the interval $[0,1]$ with local minimum at $\stratcl$ and local maximum at $\stratcr$. Moreover, there is a point $x\in(\stratcl,\stratcr)$ such that $\bimap(x)<x<\bimap^3(x)$.
\end{lemma}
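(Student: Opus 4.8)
The plan is to establish the three claimed properties in sequence---continuity, self-mapping of $[0,1]$ together with the bimodal shape, and then the hypotheses of Proposition~\ref{lem:chaos_cond}---after which the conclusion $\bimap(x)<x<\bimap^3(x)$ follows immediately by invoking that proposition with $z_l=\stratcl$ and $z_r=\stratcr$. First I would check that the two breakpoints match: on the first branch $\bimap(\stratcl^-)=\beta_1(\stratcl-\stratcl)=0$ agrees with $\bimap(\stratcl)=0$ from the second, and on the second branch $\bimap(\stratcr^-)=\beta_2(\stratcr-\stratcl)$ agrees with the value $\beta_3\cdot 0+\beta_2(\stratcr-\stratcl)$ of the third branch at $\stratcr$. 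So $\bimap$ is continuous, with the three values I will need, namely $\bimap(\stratcl)=0$, $\bimap(0)=-\beta_1\stratcl$, and $\bimap(\stratcr)=\beta_2(\stratcr-\stratcl)$. Since $\beta_1<0$, $\beta_2>0$, $\beta_3<0$, the map is decreasing on $[0,\stratcl)$, increasing on $[\stratcl,\stratcr)$, and decreasing on $[\stratcr,1]$, which gives the local minimum at $\stratcl$ and local maximum at $\stratcr$.

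Next I would verify that $\bimap$ maps $[0,1]$ into itself; this is exactly what the three parameter intervals encode, one endpoint forcing the relevant extreme value to stay below $1$ and the other forcing it above the appropriate threshold. On the first branch the largest value is $\bimap(0)=-\beta_1\stratcl$, and $\beta_1\geq-1/\stratcl$ yields $\bimap(0)\leq 1$ while $\beta_1<-\stratcr/\stratcl$ yields $\bimap(0)>\stratcr$. On the second branch the largest value is $\bimap(\stratcr)=\beta_2(\stratcr-\stratcl)$, and $\beta_2\leq 1/(\stratcr-\stratcl)$ yields $\bimap(\stratcr)\leq 1$ while $\beta_2>\stratcl/(\stratcr-\stratcl)$ yields $\bimap(\stratcr)>\stratcl$. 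On the third branch the smallest value is $\bimap(1)=\beta_3(1-\stratcr)+\beta_2(\stratcr-\stratcl)$, and $\beta_3\geq-\stratcl/(1-\stratcr)$ combined with $\bimap(\stratcr)>\stratcl$ forces $\bimap(1)>0$. (The intervals are nonempty precisely because $0<\stratcl<\stratcr<1$.) Hence $\bimap([0,1])\subseteq[0,1]$, and $\bimap\in\mathcal{B}_L$ is a well-defined continuous bimodal map.

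Finally I would check the hypotheses of Proposition~\ref{lem:chaos_cond} at $z_l=\stratcl$, $z_r=\stratcr$. From the boundary values above, $\bimap(\stratcr)=\beta_2(\stratcr-\stratcl)>\stratcl$ gives condition~\ref{lem:chaos_condA}, and $\bimap(\stratcl)=0<\stratcl$ gives condition~\ref{lem:chaos_condB}; moreover $\bimap^2(\stratcl)=\bimap(0)=-\beta_1\stratcl>\stratcr$ supplies the standing hypothesis $f^2(z_l)>z_r$. The proposition then produces a point $x\in(\stratcl,\stratcr)$ with $\bimap(x)<x<\bimap^3(x)$, as desired. The computations are all routine; the only point that needs a moment of care is recognizing which branch of Proposition~\ref{lem:chaos_cond} to use: because $\bimap(\stratcl)=0<\stratcr$, the unprimed conditions \ref{lem:chaos_cond1}--\ref{lem:chaos_cond2} cannot hold, and it is the primed pair \ref{lem:chaos_condA}--\ref{lem:chaos_condB} that applies.
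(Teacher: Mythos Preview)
Your proof is correct and follows essentially the same route as the paper: both verify that the parameter ranges force $\bimap(\stratcl)=0$, $\bimap(0)\in(\stratcr,1]$, $\bimap(\stratcr)\in(\stratcl,1]$, and $\bimap(1)\in[0,1]$, which simultaneously guarantee that $\bimap$ is a self-map of $[0,1]$ and that the primed hypotheses of Proposition~\ref{lem:chaos_cond} hold at $z_l=\stratcl$, $z_r=\stratcr$. Your presentation is in fact slightly cleaner in that you explicitly identify which branch of Proposition~\ref{lem:chaos_cond} applies and why the unprimed one cannot.
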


\vspace{0.2cm}

\noindent{\bf Step 2. Proof of Theorem~\ref{thm:anticoord-chaos} for $p\in(0,\frac{1}{2})$.}
We first show that Theorem~\ref{thm:anticoord-chaos} holds for $\eq\in(0,\frac{1}{2})$. We begin by reducing the number of parameters of the model by setting critical points of the constructed map:
\begin{equation}\label{clcrp}
\stratcl:=\frac{\eq}{1-\eq}\quad\text{and}\quad \stratcr:=2\eq.
\end{equation}
Our aim is to find switch rates $\switch_{AB}$ and $\switch_{BA}$ for which the map $\updmap$ in \eqref{eq:game_dynamics-PC}, where $\step>0$, has the form \eqref{eq:f_form}. In order to guarantee that the conditions \ref{lem:chaos_condA} and \ref{lem:chaos_condB} from Proposition \ref{lem:chaos_cond} hold and to simplify calculations we assume that $\step=1$.

\begin{proposition}\label{prop:inn_switch_rates}
Let $\game \equiv \gamefull$ be a $2\times 2$ anti-coordination game defined by \eqref{eq:game} with a Nash equilibrium $\eq\in(0,\frac{1}{2})$.\footnote{Thus, $\gaind-\gainb<\gaina-\gainc$.} We define the switch rates
\begin{equation}\label{rho12inn}
 \switch_{AB}(x) = \begin{cases}
	0, &\text{for}\ \stratx\in[0,\eq),\\
	\frac{\stratx-\eq}{\eq\stratx}  , &\text{for}\ \stratx\in[\eq,\frac{\eq}{1-\eq}),\\
	1-\beta_2\left(1-\frac{\eq}{(1-\eq)\stratx} \right)   , &\text{for}\ \stratx\in[\frac{\eq}{1-\eq},2\eq),\\
	1-\beta_3\left(1-\frac{2\eq}{\stratx} \right)-\beta_2\frac{\eq(1-2\eq)}{(1-\eq)\stratx}  , &\text{for}\ \stratx\in[2\eq,1],\\
\end{cases}
\end{equation}
and
\begin{equation}\label{rho21inn}
 \switch_{BA}(x) = \begin{cases}
	\frac{\eq-\stratx}{\eq(1-\stratx)}, &\text{for}\ \stratx\in[0,\eq),\\
	0, &\text{for}\ \stratx\in[\eq,1],\\
\end{cases}
\end{equation}
where $\beta_2 \in \left(\frac{1}{1-2\eq},\frac{2(1-\eq)}{1-2\eq}\right)$ and $\beta_3 \in \left[ -\frac{\eq}{(1-2\eq)(1-\eq)},0\right)$.
Then $\switch_{AB}$ and $\switch_{BA}$ are Lipschitz continuous on $\players$ and fulfill \eqref{eq:paircomp}. Moreover, game dynamics induced by revision protocols in  \eqref{eq:game_dynamics-PC} has periodic orbits of any period, is Li-Yorke chaotic and has exactly one fixed point $p$ which is repelling.
\end{proposition}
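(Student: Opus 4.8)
The plan is to reverse-engineer the switch rates so that, at $\step=1$, the update map $\updmap$ from \eqref{eq:game_dynamics-PC} coincides with a member of the family $\mathcal{B}_L$ written in the normalized form \eqref{eq:f_form}, and then to read off Li--Yorke chaos directly from Lemma \ref{cor:lin_bimodal1}. The first step is a branch-by-branch substitution: with the critical points fixed as in \eqref{clcrp}, namely $\stratcl=\eq/(1-\eq)$ and $\stratcr=2\eq$, I would insert $\switch_{BA}$ and $\switch_{AB}$ into \eqref{eq:game_dynamics-PC} and simplify. On $[0,\eq)$ and on $[\eq,\stratcl)$ the two distinct defining expressions both collapse to the single affine map $x\mapsto\beta_1(x-\stratcl)$ with $\beta_1=-(1-\eq)/\eq=-1/\stratcl$; on $[\stratcl,\stratcr)$ one obtains $\beta_2(x-\stratcl)$; and on $[\stratcr,1]$ one obtains $\beta_3(x-\stratcr)+\beta_2(\stratcr-\stratcl)$, using $\stratcr-\stratcl=\eq(1-2\eq)/(1-\eq)$. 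This identifies $\updmap$ with a $\bimap$ of the form \eqref{eq:f_form}. A point I would flag explicitly is the interior breakpoint at $\eq$, which lies strictly inside the first lap $[0,\stratcl)$ (since $\eq<\eq/(1-\eq)$) and where the formula for $\updmap$ changes but the resulting linear piece does not.

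The second step verifies that the parameter windows of the proposition are nested inside those of Lemma \ref{cor:lin_bimodal1}. Substituting the chosen $\stratcl,\stratcr$ one computes $\stratcl/(\stratcr-\stratcl)=1/(1-2\eq)$, $1/(\stratcr-\stratcl)=(1-\eq)/(\eq(1-2\eq))$, and $\stratcl/(1-\stratcr)=\eq/((1-\eq)(1-2\eq))$. The $\beta_3$-range then matches exactly, the value $\beta_1=-1/\stratcl$ is the admissible left endpoint of the Lemma's $\beta_1$-window (valid precisely because $\stratcr<1$, i.e. $\eq<1/2$), and the proposition's window $(\frac{1}{1-2\eq},\frac{2(1-\eq)}{1-2\eq})$ sits strictly inside the Lemma's $(\frac{1}{1-2\eq},\frac{1-\eq}{\eq(1-2\eq)}]$ exactly because $2\eq<1$. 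Hence Lemma \ref{cor:lin_bimodal1} applies and yields a point $x\in(\stratcl,\stratcr)$ with $\bimap(x)<x<\bimap^3(x)$. The chaotic conclusions then follow from the chain recorded after Proposition \ref{lem:chaos_cond}: the Li--Misiurewicz--Panigiani--Yorke theorem gives a period-$3$ orbit, Sharkovsky's theorem gives periodic orbits of all periods, and the Li--Yorke theorem gives an uncountable scrambled set.

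It then remains to discharge the admissibility conditions on the protocol. I would check that $\switch_{AB},\switch_{BA}\ge 0$, that both are continuous across the junctions $\eq,\stratcl,\stratcr$, and that each is Lipschitz, as every branch is a ratio of affine functions whose denominator stays bounded away from $0$ (each $1/\stratx$ occurs only where $\stratx\ge\eq>0$). The sharper upper bound $\beta_2<2(1-\eq)/(1-2\eq)$ is exactly what keeps $\switch_{AB}\ge 0$ at its minimum, attained at $\stratx=\stratcr$, and is the single place where the proposition's window is strictly narrower than the Lemma's. The sign requirement \eqref{eq:paircomp} follows from the anti-coordination inequalities $\gaina<\gainc$, $\gaind<\gainb$: these make $\pay_A-\pay_B$ strictly decreasing in $\stratx$ with its only zero at $\eq$, so $\switch_{AB}>0\Leftrightarrow\stratx>\eq\Leftrightarrow\pay_B>\pay_A$, and symmetrically $\switch_{BA}>0\Leftrightarrow\stratx<\eq\Leftrightarrow\pay_A>\pay_B$.

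Finally, for the fixed-point analysis I would reuse the identification $\updmap=\bimap$. The decreasing first lap crosses the diagonal once, at $\eq$, with slope $\beta_1=-1/\stratcl$ of modulus $(1-\eq)/\eq>1$ (using $\eq<1/2$), so $\eq$ is repelling. No other fixed point exists: on the increasing middle lap the affine piece lies below the diagonal at both endpoints (here the strict bound on $\beta_2$ gives $\bimap(\stratcr^-)<\stratcr$, while $\bimap(\stratcl)=0<\stratcl$), hence below throughout; and on the decreasing third lap $\bimap\le\bimap(\stratcr)<\stratcr\le\stratx$. The lower bounds on $\beta_2,\beta_3$ give $\bimap(1)\ge 0$, and $\bimap(0)=1$, so $\updmap$ maps $[0,1]$ into itself and neither $0$ nor $1$ is fixed. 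I expect the only genuine labor, rather than a conceptual obstacle, to be the bookkeeping of the first step (matching the two branches across the interior breakpoint $\eq$) together with the nonnegativity check for $\switch_{AB}$, since all of the chaotic content is already delivered by Lemma \ref{cor:lin_bimodal1}.
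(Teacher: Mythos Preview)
Your proposal is correct and follows essentially the same route as the paper: identify $\updmap$ (at $\step=1$) with a piecewise-linear bimodal map in the normalized form \eqref{eq:f_form} with $\stratcl=\eq/(1-\eq)$, $\stratcr=2\eq$, $\beta_1=-1/\stratcl$, check the parameter windows against Lemma~\ref{cor:lin_bimodal1}, and read off chaos via Proposition~\ref{lem:chaos_cond} and the Li--Yorke/Sharkovsky chain. The only difference is cosmetic: the paper runs the computation forward (posits the target map \eqref{eq:map_f_form}, then solves $\switch_{AB}=1-\updmap(x)/x$ and $\switch_{BA}=(\updmap(x)-x)/(1-x)$ to obtain the switch rates), whereas you run it backward by substituting the given $\switch_{AB},\switch_{BA}$ into \eqref{eq:game_dynamics-PC} and verifying that the four pieces collapse to \eqref{eq:f_form}. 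Your observation that the strict upper bound $\beta_2<2(1-\eq)/(1-2\eq)$ is precisely what forces $\switch_{AB}(\stratcr)>0$ (and simultaneously $\bimap(\stratcr)<\stratcr$, giving uniqueness of the fixed point) is a point the paper handles only for the fixed-point uniqueness, so your treatment of \eqref{eq:paircomp} is in fact a shade more explicit.
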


We prove Proposition \ref{prop:inn_switch_rates} by deriving the form of the map $\updmap$ such that 
it satisfies the assumptions of Lemma \ref{cor:lin_bimodal1}. We then use the formula \eqref{eq:game_dynamics-PC} to obtain the corresponding switch rates $\switch_{AB}$ and $\switch_{BA}$.
From Proposition \ref{prop:inn_switch_rates} follows Theorem~\ref{thm:anticoord-chaos} for $p\in(0,\frac{1}{2})$.

Observe that the switch rates $\switch_{AB}$ and $\switch_{BA}$ from Proposition \ref{prop:inn_switch_rates} depend on three parameters: $\eq$, $\beta_2$ and $\beta_3$. While $\eq$ is determined by the game, the remaining parameters can be chosen based on Lemma \ref{cor:lin_bimodal1}. To show how these switch rates may look like, as an example, we plot switch rates on Figure \ref{fig:switch_rates} for $p=0.2$, $\beta_2=2$, and $\beta_3=-\frac{1}{3}$.

\begin{figure}[!h]
	\centering
	\begin{tikzpicture}[scale=5.3]
		\draw[thick] (0,0) -- (1,0);
		\draw[thick] (0,0) -- (0,1);
		\draw (0,1) node[left]{1};
		\draw (1,0) node[below]{1};
		\draw (-0.02,0) node[below]{0}; 
		\draw[thick,domain=0:1,variable=\y] plot ({1},{\y});
		\draw[thick,domain=0:1,variable=\x] plot ({\x},{1});
		\draw[domain=0:0.2,smooth,variable=\x,violet,very thick] plot ({\x},{0});
		\draw[domain=0.2:0.25,smooth,variable=\x,violet,very thick] plot ({\x},{1/0.2-1/(\x)});
		\draw[domain=0.25:0.4,smooth,variable=\x,violet,very thick] plot 	({\x},{1-2*(1-0.25/(\x))});
		\draw[domain=0.4:1,smooth,variable=\x,violet,very thick] plot 	({\x},{1+1/3*(1-0.4/(\x))-2*(0.4-0.25)/(\x))});
		\filldraw [red] (0.25,0) circle (0.2pt);
		\draw (0.25-0.03,-0.04) node[right,red]{$\stratcl$};
		\filldraw [red] (0.4,0) circle (0.2pt) node[anchor=north] {$\stratcr$};
		\filldraw [black] (0.2,0) circle (0.2pt) node[anchor=north] {$p$};
	\end{tikzpicture}
	\qquad
	\begin{tikzpicture}[scale=5.3]
		\draw[thick] (0,0) -- (1,0);
		\draw[thick] (0,0) -- (0,1);
		\draw (0,1) node[left]{1};
		\draw (1,0) node[below]{1};
		\draw (-0.02,0) node[below]{0}; 
		\draw[thick,domain=0:1,variable=\y] plot ({1},{\y});
		\draw[thick,domain=0:1,variable=\x] plot ({\x},{1});
		\draw[domain=0:0.2,smooth,variable=\x,orange,very thick] plot ({\x},{(0.2-\x)/(0.2*(1-\x))});
		\draw[domain=0.2:1,smooth,variable=\x,orange,very thick] plot ({\x},{0});
		\filldraw [black] (0.2,0) circle (0.2pt) node[anchor=north] {$p$};
	\end{tikzpicture}
\caption{Switch rates \textcolor{violet}{$\switch_{AB}$} (on the left) and \textcolor{orange}{$\switch_{BA}$} (on the right) from Proposition \ref{prop:inn_switch_rates} for the game \eqref{eq:game} with Nash equilibrium  $\eq=0.2$, and parameters $\beta_2=2$, $\beta_3=-\frac{1}{3}$.}
	\label{fig:switch_rates}
\end{figure}

\vspace{0.2cm}

\noindent{\bf Step 3. Proof of Theorem~\ref{thm:anticoord-chaos} for $\widetilde{p}\in(\frac{1}{2},1)$.}
Now, consider the game $\widetilde{\mathcal{G}}\equiv\widetilde{\mathcal{G}}(\mathcal{A},\widetilde{\payv})$, given be \eqref{eq:game}, where $\widetilde{\payv}(\stratx)=(\widetilde{\pay}_1(\stratx),\widetilde{\pay}_2(\stratx))$ is its payoff vector, with the unique (symmetric) Nash equilibrium $\widetilde{\eq}\in(\frac{1}{2},1)$. 
Then there exists a population game $\game \equiv \gamefull$ with $\payv(\stratx)=(\pay_A(\stratx),\pay_B(\stratx))$ with the unique Nash equilibrium $\eq=1-\widetilde{\eq}\in (0,\frac 12)$ and the switch rates $\switch_{AB}$, $\switch_{BA}$ from Proposition \ref{prop:inn_switch_rates}. Thus, to complete the proof of Theorem \ref{thm:anticoord-chaos} it is sufficient to show the following fact.


\begin{proposition}\label{prop:chaos_inn_p>}
Let $\widetilde{\mathcal{G}}\equiv\widetilde{\mathcal{G}}(\mathcal{A},\widetilde{v})$ be a $2\times 2$ anti-coordination game defined by \eqref{eq:game} with Nash equilibrium $\widetilde{\eq}\in(\frac{1}{2},1)$. Define
\begin{equation}\label{def_tilda_switch}
    \widetilde{\switch}_{AB}(\stratx):=\switch_{BA}(1-\stratx) \qquad\text{and}\qquad \widetilde{\switch}_{BA}(\stratx):=\switch_{AB}(1-\stratx),
\end{equation}
for every point $\stratx\in[0,1]$, where $\switch_{AB}$ and $\switch_{BA}$ are given in Proposition \ref{prop:inn_switch_rates}. Then the map
\[
\widetilde{\updmap}(x)=\begin{cases}
	x+\delta(1-x)\widetilde{\switch}_{BA}(x), &\text{ for } x\in[0,\widetilde{\eq}),\\
	x(1-\delta\widetilde{\switch}_{AB}(x)), &\text{ for } x\in[\widetilde{\eq},1],
\end{cases}
\]
is a piecewise linear bimodal map with local minimum at $\stratclalt:=1-\stratcr$ and local maximum at $\stratcralt:=1-\stratcl$, and the dynamical system induced by $\widetilde{\updmap}$ is Li-Yorke chaotic. Moreover, $\widetilde{\eq}$ is the unique fixed point of $\widetilde{\updmap}$ and it is repelling.
\end{proposition}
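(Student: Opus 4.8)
The plan is to recognize $\widetilde{\updmap}$ as the reflection-conjugate of the map $\updmap$ already constructed for the Nash equilibrium $\eq = 1 - \widetilde{\eq} \in (0,\tfrac12)$ in Proposition \ref{prop:inn_switch_rates}, and then to transport every claimed property across this conjugacy. Write $\sigma(x) = 1 - x$, an affine, orientation-reversing involution of $[0,1]$, and let $\updmap$ be the map from \eqref{eq:game_dynamics-PC} built from the switch rates $\switch_{AB},\switch_{BA}$ of Proposition \ref{prop:inn_switch_rates} for the value $\eq = 1 - \widetilde{\eq}$; by that proposition $\updmap$ is piecewise linear bimodal with local minimum at $\stratcl$, local maximum at $\stratcr$, is Li--Yorke chaotic, and has $\eq$ as its unique repelling fixed point. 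The central step is the identity
\begin{equation*}
\widetilde{\updmap} = \sigma \circ \updmap \circ \sigma, \qquad \text{i.e.}\qquad \widetilde{\updmap}(x) = 1 - \updmap(1-x).
\end{equation*}

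I would verify this by a direct two-case computation matching the pieces under reflection. For $x \in [\widetilde{\eq},1]$ one has $1-x \in [0,\eq)$, so $\updmap(1-x) = (1-x) + \step\, x\, \switch_{BA}(1-x)$ and hence $1 - \updmap(1-x) = x\bigl(1 - \step\,\switch_{BA}(1-x)\bigr) = x\bigl(1 - \step\,\widetilde{\switch}_{AB}(x)\bigr)$ by \eqref{def_tilda_switch}, which is exactly $\widetilde{\updmap}(x)$ on this piece. For $x \in [0,\widetilde{\eq})$ one has $1-x \in (\eq,1]$, so $\updmap(1-x) = (1-x)\bigl(1 - \step\,\switch_{AB}(1-x)\bigr)$ and $1 - \updmap(1-x) = x + \step(1-x)\switch_{AB}(1-x) = x + \step(1-x)\widetilde{\switch}_{BA}(x)$, again matching $\widetilde{\updmap}$. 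The case boundaries line up because $\sigma(\eq) = \widetilde{\eq}$.

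With the conjugacy in hand, all three assertions follow. Since $\sigma$ is affine, $\widetilde{\updmap}$ is piecewise linear, and differentiating gives $\widetilde{\updmap}'(x) = \updmap'(1-x)$ wherever defined (with the two one-sided derivatives swapped at the breakpoints); thus the three laps of $\updmap$, with slopes $\beta_1<0$, $\beta_2>0$, $\beta_3<0$, appear in reversed order, making $\widetilde{\updmap}$ decreasing--increasing--decreasing with local minimum at $\sigma(\stratcr) = \stratclalt$ and local maximum at $\sigma(\stratcl) = \stratcralt$. Because $\sigma$ is a homeomorphism it is a topological conjugacy, so Li--Yorke chaos (and periodic orbits of every period) transfers from $\updmap$ to $\widetilde{\updmap}$: the image under $\sigma$ of a scrambled set for $\updmap$ is a scrambled set for $\widetilde{\updmap}$. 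Finally $\widetilde{\updmap}(x) = x$ holds iff $\updmap(\sigma(x)) = \sigma(x)$, so the unique fixed point $\eq$ of $\updmap$ yields the unique fixed point $\sigma(\eq) = \widetilde{\eq}$ of $\widetilde{\updmap}$, and $\widetilde{\updmap}'(\widetilde{\eq}) = \updmap'(\eq)$ side-by-side on each one-sided derivative forces $\lvert\widetilde{\updmap}'(\widetilde{\eq})\rvert > 1$, so $\widetilde{\eq}$ is repelling. The only real care needed is bookkeeping: tracking the piece boundaries and the one-sided derivatives at the non-smooth points under reflection. Conceptually the conjugacy reduces everything to Proposition \ref{prop:inn_switch_rates}, so I do not expect a substantive new obstacle here.
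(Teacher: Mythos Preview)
Your proposal is correct and follows essentially the same approach as the paper: both establish the topological conjugacy $\widetilde{\updmap} = \sigma \circ \updmap \circ \sigma$ via the same two-case computation (this is the paper's display \eqref{fiffi}), and then transport the piecewise-linear bimodal structure, Li--Yorke chaos, and the repelling unique fixed point across the conjugacy. The paper additionally verifies that $\widetilde{\switch}_{AB}, \widetilde{\switch}_{BA}$ satisfy the sign conditions of an innovative protocol, but this is contextual rather than part of the proposition's formal claims.
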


The proof of Proposition \ref{prop:chaos_inn_p>} is based on the fact that the maps $\widetilde{\updmap}$, derived from the switch rates $\widetilde{\switch}_{AB}$, $\widetilde{\switch}_{BA}$, and $\updmap$, obtained from the switch rates $\switch_{AB}$, $\switch_{BA}$, are topologically conjugate (one can compare the graphs of the maps $\updmap$ and $\widetilde{\updmap}$ in Figure \ref{fig:f_and_widetildef}). As a result, the chaotic behavior of agents observed in the game $\game$ will be also present in the game $\widetilde{\mathcal{G}}$. The proof of the Theorem~\ref{thm:anticoord-chaos} is completed.

\begin{figure}[!h]
	\centering
	\begin{tikzpicture}[scale=5.3]
		\draw[thick] (0,0) -- (1,0);
		\draw[thick] (0,0) -- (0,1);
		\draw (0,1) node[left]{1};
		\draw (1,0) node[below]{1};
		\draw (-0.02,0) node[below]{0}; 
		\draw[thick,domain=0:1,variable=\y] plot ({1},{\y});
		\draw[thick,domain=0:1,variable=\x] plot ({\x},{1});
		\draw[domain=0:1,smooth,variable=\x,black,dashed] plot ({\x},{\x});
		\draw[domain=0:0.25,smooth,variable=\x,blue,very thick] plot ({\x},{(0.2-1)/(0.2)*\x+1});
		\draw[domain=0.25:0.4,smooth,variable=\x,blue,very thick] plot 	({\x},{2*(\x-0.2/(1-0.2))});
		\draw[domain=0.4:1,smooth,variable=\x,blue,very thick] plot 	({\x},{-1/3*(\x-2*0.2)+2*(0.2*0.6)/0.8});
		\filldraw [red] (0.25,0) circle (0.2pt);
		\draw (0.25-0.03,-0.04) node[right,red]{$\stratcl$};
		\filldraw [red] (0.4,0) circle (0.2pt) node[anchor=north] {$\stratcr$};
		\filldraw [black] (0.2,0) circle (0.2pt) node[anchor=north] {$p$};
	\end{tikzpicture}
	\qquad
	\begin{tikzpicture}[scale=5.3]
		\draw[thick] (0,0) -- (1,0);
		\draw[thick] (0,0) -- (0,1);
		\draw (0,1) node[left]{1};
		\draw (1,0) node[below]{1};
		\draw (-0.02,0) node[below]{0}; 
		\draw[thick,domain=0:1,variable=\y] plot ({1},{\y});
		\draw[thick,domain=0:1,variable=\x] plot ({\x},{1});
		\draw[domain=0:1,smooth,variable=\x,black,dashed] plot ({\x},{\x});
		\draw[domain=0:1-0.4,smooth,variable=\x,blue,very thick] plot ({\x},{-1/3*\x-0.6*(2*0.2/0.8-1/3)+1});
		\draw[domain=1-0.4:1-0.25,smooth,variable=\x,blue,very thick] plot 	({\x},{2*\x+2*(0.4-1)/0.8+1});
		\draw[domain=1-0.25:1,smooth,variable=\x,blue,very thick] plot 	({\x},{(0.2-1)/(0.2)*(\x-1)});
		\filldraw [red] (1-0.25,0) circle (0.2pt);
		\draw (1-0.22,-0.055) node[left,red]{$\stratcralt$};
		\filldraw [red] (1-0.4,0) circle (0.2pt) node[anchor=north] {$\stratclalt$};
		\filldraw [black] (1-0.2,0) circle (0.2pt) node[anchor=north] {$\widetilde{\eq}$};
	\end{tikzpicture}
	\caption{The graphs of maps $\updmap$ induced by the switch rates $\switch_{AB}$ and $\switch_{BA}$ from Proposition \ref{prop:inn_switch_rates} for $\eq=0.2$ (on the left) and $\widetilde{\updmap}$ from Proposition \ref{prop:chaos_inn_p>} for $\widetilde{\eq}=0.8$ (on the right) 
    and for parameters $\beta_2=2$, $\beta_3=-\frac{1}{3}$.}
	\label{fig:f_and_widetildef}
\end{figure}


\begin{remark}
One can also construct an appropriate imitative revision protocol introducing chaotic game dynamics. This construction follows similar steps as presented above, that is, it is based on analogs of 
Lemma \ref{cor:lin_bimodal1} and Propositions \ref{prop:inn_switch_rates} and \ref{prop:chaos_inn_p>}. Nevertheless, bimodal maps introduced by imitative protocols has two additional fixed points: 0 and 1 (see Figure  \ref{fig:g_and_gtilde} presenting the switch rates $\switch_{AB}$, $\switch_{BA}$ and the maps $\updmap$ and $\widetilde{\updmap}$ of an imitative revision protocol). 
We present details of the construction in the Appendix \ref{app:imitative}. 
\end{remark}

\begin{figure}[!h]
\begin{subfigure}{0.48\textwidth}
	\begin{tikzpicture}[scale=5.3]
		\draw[thick] (0,0) -- (1,0);
		\draw[thick] (0,0) -- (0,1);
		\draw (0,1) node[left]{1};
		\draw (1,0) node[below]{1};
		\draw (-0.02,0) node[below]{0}; 
		\draw[thick,domain=0:1,variable=\y] plot ({1},{\y});
		\draw[thick,domain=0:1,variable=\x] plot ({\x},{1});
		\draw[domain=0:2/5,smooth,variable=\x,violet,very thick] plot ({\x},{0});
		\draw[domain=2/5:12/25,smooth,variable=\x,violet,very thick] plot ({\x},{-(\x-2/5)*(2/5-2)/(2/5*\x)});
		\draw[domain=12/25:1,smooth,variable=\x,violet,very thick] plot ({\x},{2/5*(2/5-2)*(1-\x)/((4/25+2*2/5-2)*\x)});
		\filldraw [red] (12/25,0) circle (0.2pt) node[anchor=north] {$\stratcr$};
		\filldraw [black] (0.4,0) circle (0.2pt) node[anchor=north] {$p$};
	\end{tikzpicture}
    \caption{Graph of the switch rate $\switch_{AB}$.}
\end{subfigure}
\hfill
\begin{subfigure}{0.48\textwidth}
	\begin{tikzpicture}[scale=5.3]
		\draw[thick] (0,0) -- (1,0);
		\draw[thick] (0,0) -- (0,1);
		\draw (0,1) node[left]{1};
		\draw (1,0) node[below]{1};
		\draw (-0.02,0) node[below]{0}; 
		\draw[thick,domain=0:1,variable=\y] plot ({1},{\y});
		\draw[thick,domain=0:1,variable=\x] plot ({\x},{1});
		\draw[domain=0:1/5,smooth,variable=\x,orange,very thick] plot ({\x},{\x*(2-2/5)/(2/5*(1-\x))});
		\draw[domain=1/5:2/5,smooth,variable=\x,orange,very thick] plot ({\x},{(\x-2/5)*(2/5-2)/(2/5*(1-\x))});
		\draw[domain=2/5:1,smooth,variable=\x,orange,very thick] plot ({\x},{0});
		\filldraw [red] (0.2,0) circle (0.2pt) node[anchor=north] {$\stratcl$};
		\filldraw [black] (0.4,0) circle (0.2pt) node[anchor=north] {$p$};
	\end{tikzpicture}
    \caption{Graph of the switch rate $\switch_{BA}$.}
\end{subfigure}
\hfill
\begin{subfigure}{0.48\textwidth}
	\begin{tikzpicture}[scale=5.3]
		\draw[thick] (0,0) -- (1,0);
		\draw[thick] (0,0) -- (0,1);
		\draw (0,1) node[left]{1};
		\draw (1,0) node[below]{1};
		\draw (-0.02,0) node[below]{0}; 
		\draw[thick,domain=0:1,variable=\y] plot ({1},{\y});
		\draw[thick,domain=0:1,variable=\x] plot ({\x},{1});
		\draw[domain=0:1,smooth,variable=\x,black,dashed] plot ({\x},{\x});
		\draw[domain=0:1/5,smooth,variable=\x,blue,very thick] plot ({\x},{5*\x});
		\draw[domain=1/5:12/25,smooth,variable=\x,blue,very thick] plot ({\x},{-2*(1-2/5)*5/2*\x+2-2/5});
		\draw[domain=12/25:1,smooth,variable=\x,blue,very thick] plot ({\x},{2*(1-4/25)/(4/25+4/5-2)*(1-\x)+1});
		\filldraw [red] (1/5,0) circle (0.2pt) node[anchor=north] {$\stratcl$};
		\filldraw [black] (0.4,0) circle (0.2pt) node[anchor=north] {$p$};
		\filldraw [red] (12/25,0) circle (0.2pt) node[anchor=north] {$\stratcr$};
	\end{tikzpicture}
    \caption{Graph of the  map $\updmap$.}
\end{subfigure}
\hfill
\begin{subfigure}{0.48\textwidth}
	\begin{tikzpicture}[scale=5.3]
		\draw[thick] (0,0) -- (1,0);
		\draw[thick] (0,0) -- (0,1);
		\draw (0,1) node[left]{1};
		\draw (1,0) node[below]{1};
		\draw (-0.02,0) node[below]{0}; 
		\draw[thick,domain=0:1,variable=\y] plot ({1},{\y});
		\draw[thick,domain=0:1,variable=\x] plot ({\x},{1});
		\draw[domain=0:1,smooth,variable=\x,black,dashed] plot ({\x},{\x});
		\draw[domain=0:1-12/25,smooth,variable=\x,blue,very thick] plot ({\x},{-2*(1-4/25)/(4/25+4/5-2)*\x});
		\draw[domain=1-12/25:1-1/5,smooth,variable=\x,blue,very thick] plot ({\x},{2*(1-2/5)*5/2*(1-\x)-1+2/5});
		\draw[domain=1-1/5:1,smooth,variable=\x,blue,very thick] plot ({\x},{1-5*(1-\x)});
		\filldraw [red] (1-12/25,0) circle (0.2pt) node[anchor=north] {$\stratclalt$};
		\filldraw [black] (1-2/5,0) circle (0.2pt) node[anchor=north] {$\widetilde{\eq}$};
		\filldraw [red] (1-1/5,0) circle (0.2pt) node[anchor=north] {$\stratcralt$};
	\end{tikzpicture}
    \caption{Graph of the  map $\widetilde{\updmap}$.}
\end{subfigure}
\caption{Graphs of the elements of the dynamical system induced by an imitative revision protocol. {[\bf (a)} and {\bf (b)]} Graphs of the switch rates for Nash equilibrium $\eq=0{.}4$. {\bf (c)} Graph of the update map for Nash equilibrium $\eq=0{.}4$. {\bf (d)} Graph of the update map for Nash equilibrium $\widetilde{\eq}=0{.}6$. }
	\label{fig:g_and_gtilde}
\end{figure}

\section{A natural imitative protocol leading to chaos}
\label{sec:thm2}
The proof of Theorem~\ref{thm:anticoord-chaos} focuses on constructing a revision protocol that leads to chaotic game dynamics. This is a convenient way to show that certain dynamical scenarios are possible as long as the agents behave in an appropriate manner. However, the nature of this proof does not provide much insight into the extent to which their behavior may be sensible or natural. In fact, the revision protocols obtained in this way may be artificial and difficult to interpret, as the switch rates are not necessarily monotonously correlated with the payoff difference (see Figure~\ref{fig:switch_rates}). 
For this reason, in this section we will present a modified variant of the pairwise proportional imitation protocol \eqref{eq:PPI} that has simple interpretation and induces Li-Yorke chaotic game dynamics for any game defined by \eqref{eq:game}. As PPI is probably the best known revision protocols (e.g., due to its connection with replicator dynamics in the continuous case), this result shows that unpredictable behavior of game dynamics is an important property which one should be aware of even when dealing with very simple games.  All proofs of results from this section are delegated to Appendix~\ref{app:proof-thm2}.

We begin with a simple observation that chaotic behavior can be detected in any imitative (symmetric) revision protocol. Thus, chaos is not an exceptional property but an attribute of imitation.
\begin{proposition} \label{thm:imitation12}
Consider a game defined by \eqref{eq:game} with $\gaind-\gainb=\gaina-\gainc$, that is the Nash equilibrium of the game is $\eq=1/2$. Assume that the conditional imitation rates $\iswitch_{AB}$ and $\iswitch_{BA}$ fullfill condition
\begin{equation} \label{eq:iswitchimit} \iswitch_{AB}(\stratx)+\iswitch_{AB}(1-\stratx)=\iswitch_{BA}(\stratx)+\iswitch_{BA}(1-\stratx)\end{equation}
for any $\stratx\in [0,1]$. Then there exists sufficiently large $\step$ such that the game dynamics introduced by \eqref{eq:imitation} is Li-Yorke chaotic and has periodic orbits of any period. 
\end{proposition}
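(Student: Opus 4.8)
The plan is to reduce the claim to a single application of Proposition~\ref{lem:chaos_cond} with conditions \ref{lem:chaos_cond1}--\ref{lem:chaos_cond2}, exploiting the central symmetry that the hypothesis \eqref{eq:iswitchimit} forces on the update map. Writing the imitative dynamics as in \eqref{eq:game_dynamics_imit}, i.e. $\updmap(\stratx)=\stratx\bigl(1+\step(1-\stratx)h(\stratx)\bigr)$ with $h=\iswitch_{BA}-\iswitch_{AB}$, the first step is to observe that \eqref{eq:iswitchimit} is exactly the antisymmetry $h(1-\stratx)=-h(\stratx)$; substituting this into the formula for $\updmap$ gives $\updmap(1-\stratx)=1-\updmap(\stratx)$, so $\updmap$ is symmetric about the center $(\tfrac12,\tfrac12)$ and in particular fixes $\tfrac12=\eq$. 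Since the conditional imitation rates are Lipschitz, $\updmap$ is continuous, so Proposition~\ref{lem:chaos_cond} will be applicable.

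Next I would pin down the sign of $h$. From \eqref{payoffs} and $\eq=\tfrac12$ one has $\pay_A(\stratx)-\pay_B(\stratx)>0$ exactly for $\stratx<\tfrac12$; combined with the monotonicity of the net conditional imitation rates \eqref{eq:net_cond_imit_rates} (equivalently, that per-capita growth rates are ordered as payoffs), this yields $h(\stratx)>0$ on $(0,\tfrac12)$, $h(\stratx)<0$ on $(\tfrac12,1)$, and $h(\tfrac12)=0$. Because $\stratx h(\stratx)$ vanishes at both endpoints of $[0,\tfrac12]$ and is strictly positive inside, it attains its maximum at an interior point $x_0\in(0,\tfrac12)$; set $\step^\ast=1/\bigl(x_0 h(x_0)\bigr)$, which is finite and positive. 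A short computation shows $\step^\ast$ is precisely the largest step size for which $\updmap$ maps $[0,1]$ into itself: the constraint $\updmap\le 1$ on $(0,\tfrac12)$ reduces to $\step\,\stratx h(\stratx)\le 1$, and by the central symmetry the constraint $\updmap\ge 0$ on $(\tfrac12,1)$ gives the identical bound. Moreover, at $\step=\step^\ast$ one computes $\updmap(x_0)=x_0+(1-x_0)\,\step^\ast x_0 h(x_0)=x_0+(1-x_0)=1$.

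Taking $\step=\step^\ast$ (this is the ``sufficiently large'' value; by continuity any $\step$ slightly below it works as well), I would apply Proposition~\ref{lem:chaos_cond} with $z_l=x_0$ and $z_r=1-x_0\in(\tfrac12,1)$, so that $z_l<z_r$. The central symmetry makes all three hypotheses immediate: $\updmap(z_l)=1>1-x_0=z_r$ gives \ref{lem:chaos_cond2}; $\updmap(z_r)=1-\updmap(x_0)=0<x_0=z_l$ gives \ref{lem:chaos_cond1}; and since $1$ is a fixed point, $\updmap^2(z_l)=\updmap(1)=1>z_r$. Proposition~\ref{lem:chaos_cond} then produces a point $\stratx$ with $\updmap(\stratx)<\stratx<\updmap^3(\stratx)$, hence a period-$3$ orbit by the Li--Misiurewicz--Pianigiani--Yorke theorem \cite{li1982odd}; Sharkovsky's theorem \cite{sha} then yields periodic orbits of every period, and \cite{liyorke} yields Li--Yorke chaos.

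The only genuinely delicate point is the second step together with the extremal choice of $\step$: one must be sure that $h$ has the asserted strict sign on $(0,\tfrac12)$ (so that the maximizer $x_0$ is interior and $\step^\ast$ is finite and positive), and that $\step^\ast$ is exactly the well-definedness threshold, so that at $\step^\ast$ the peak of the map lands on the fixed point $1$ and the chain $\updmap(x_0)=1$, $\updmap^2(x_0)=1$ closes the argument without any further estimate. Everything else is routine continuity and symmetry bookkeeping, and I expect no difficulty once the antisymmetry $h(1-\stratx)=-h(\stratx)$ and the consequent relation $\updmap(1-\stratx)=1-\updmap(\stratx)$ are in hand.
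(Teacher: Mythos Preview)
Your proposal is correct and follows essentially the same route as the paper: derive the antisymmetry $h(1-\stratx)=-h(\stratx)$ from \eqref{eq:iswitchimit}, obtain the central symmetry $\updmap(1-\stratx)=1-\updmap(\stratx)$, choose the step size so that the maximum of $\updmap$ on $[0,\tfrac12]$ equals $1$ (and hence the minimum on $[\tfrac12,1]$ equals $0$), and apply Proposition~\ref{lem:chaos_cond} with $z_l$ the maximizer and $z_r=1-z_l$. Your parametrization via $x_0=\arg\max_{[0,1/2]}\stratx h(\stratx)$ is in fact slightly cleaner than the paper's, since it makes the critical $\step^\ast=1/(x_0 h(x_0))$ non-circular (the paper defines $z_l^\step$ as the maximizer of the $\step$-dependent map $\updmap$), but the underlying argument is the same.
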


Proposition \ref{thm:imitation12} guarantees that in the totally symmetric case imitative revision protocols will lead to unpredictable behavior, which is in stark contract with equilibrium predictions for small values of $\step$.
This result can be treated as a consequence of the proof of Theorem 1 from \cite{falniowski2024discrete}. However, we  present the proof of Proposition \ref{thm:imitation12} in Appendix~\ref{app:proof-thm2}.


Let us consider a game $\game\equiv\gamefull$ defined by \eqref{eq:game}. 
While for \eqref{eq:PPI} the rate of switching from strategy $A$ to strategy $B$ is $(1-\stratx)\left[\frac{b-d}{\eq}(\stratx-\eq)\right]_+$ and the rate of switching from strategy $B$ to strategy $A$ is $\stratx\left[\frac{b-d}{\eq}(\eq-\stratx)\right]_+$,
we define the perturbed conditional imitation rates as
\begin{equation}\label{eq:PPI_cond_imit_rates_pert}
\iswitch_{AB}(\stratx) = \eta \left[\frac{b-d}{\eq}(\stratx-\eq)\right]_+
        \qquad\text{and}\qquad
\iswitch_{BA}(\stratx) = \xi\left[\frac{b-d}{\eq}(\eq-\stratx)\right]_+
\end{equation}
for $\eta,\xi>0$. By including the maps~\eqref{eq:PPI_cond_imit_rates_pert} into the formula~\eqref{eq:game_dynamics_imit}, we obtain the family of maps

\begin{equation}\label{eq:PPI_map_pert}
	\updmap
    (\stratx)=
	\begin{cases}
		x\left(1+\step\xi\frac{b-d}{p}(1-x)(p-x)\right), &\text{for}\ x\in[0,p)\\
		x\left(1-\step\eta\frac{b-d}{p}(1-x)(x-p)\right), &\text{for}\ x\in[p,1]\\
	\end{cases}.
\end{equation}

Observe that the map $\updmap$ in \eqref{eq:PPI_map_pert} for $\eta=\xi=1$ is the update map for \eqref{eq:PPI}. Then, by Proposition~\ref{thm:imitation12}, for $\eq=\frac{1}{2}$ and $\step$ sufficiently large, the map $\updmap$ is Li-Yorke chaotic. 

The multipliers $\eta$ and $\xi$ modify the conditional imitation rates of \eqref{eq:PPI} in two ways. First, they 
increase (when $\eta>1$ and $\xi > 1$) or decrease (when $\eta \in (0,1)$ and $\xi \in (0,1)$) the impact of differences in payoffs.
Second, when $\eta \neq \xi$ the model becomes non-symmetric.
We will use $\eta$ and $\xi$ to rescale the switch rates of \eqref{eq:PPI} in such a way that the resulting revision protocol will lead to chaotic behavior for other games (other choice of the game/Nash equilibrium $\eq$), where $\eq\neq 1/2$. 

The main result of this section is the following fact.
\begin{theorem} \label{thm:perturbedPPIchaos}
    For every population game $\game\equiv\gamefull$ defined by \eqref{eq:game} there exist
    $\eta, \xi > 0$ such that in the imitative game dynamics introduced by the imitation rates \eqref{eq:PPI_cond_imit_rates_pert} 
    there are periodic orbits of any period and the system is Li-Yorke chaotic for sufficiently large $\step$. 
\end{theorem}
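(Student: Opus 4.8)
The plan is to apply Proposition~\ref{lem:chaos_cond} to the update map $\updmap$ of \eqref{eq:PPI_map_pert}, taking $z_l=\stratcl$ and $z_r=\stratcr$ to be its two critical points. Write $K=\tfrac{b-d}{p}=(c-a)+(b-d)>0$ and introduce the effective branch strengths $\mu=\step\xi K$ and $\nu=\step\eta K$, so that $\updmap(x)=x\bigl(1+\mu(1-x)(p-x)\bigr)$ on $[0,p)$ and $\updmap(x)=x\bigl(1-\nu(1-x)(x-p)\bigr)$ on $[p,1]$; the left branch depends only on $\mu$, the right branch only on $\nu$, and $\updmap(1)=1$. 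It then suffices to produce $\eta,\xi>0$ and an admissible $\step$ for which
\[
\updmap(\stratcr)<\stratcl,\qquad \updmap(\stratcl)>\stratcr,\qquad \updmap^2(\stratcl)>\stratcr ,
\]
since by Proposition~\ref{lem:chaos_cond} (conditions \ref{lem:chaos_cond1}, \ref{lem:chaos_cond2}) this yields a point with $\updmap(x)<x<\updmap^3(x)$, hence a period-$3$ orbit, and the Li--Misiurewicz--Panigiani--Yorke theorem \cite{li1982odd}, Sharkovsky's theorem \cite{sha} and \cite{liyorke} then deliver periodic orbits of every period together with Li--Yorke chaos.

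First I would pin down the shape. Differentiating gives $\updmap'=1+\mu\,q$ on $[0,p)$ and $\updmap'=1+\nu\,q$ on $[p,1]$ with $q(x)=3x^2-2(1+p)x+p$; since $q$ is convex with $q(0)=p>0$, $q(1)=1-p>0$ and $q(p)=-p(1-p)<0$, each branch is unimodal as soon as $\mu,\nu>1/(p(1-p))$, giving a unique interior maximum $\stratcl\in(0,p)$ and minimum $\stratcr\in(p,1)$, so $\updmap$ is bimodal with $0<\stratcl<p<\stratcr<1$. By the envelope theorem $\mu\mapsto\updmap(\stratcl)$ is strictly increasing and $\nu\mapsto\updmap(\stratcr)$ strictly decreasing, while $\stratcl,\stratcr$ tend to finite interior limits as $\mu,\nu\to\infty$.

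Because $\updmap(x)\ge x$ on $[0,p)$ and $\updmap(x)\le x$ on $[p,1]$, the map is a self-map of $[0,1]$ precisely when $\updmap(\stratcl)\le1$ and $\updmap(\stratcr)\ge0$; by the monotonicities above these pin down unique thresholds $\mu^\ast,\nu^\ast>0$, depending only on $p$, at which $\updmap(\stratcl)=1$ respectively $\updmap(\stratcr)=0$. I then fix $\eta,\xi$ with ratio $\eta/\xi=\nu^\ast/\mu^\ast$, so that as $\step$ grows both constraints saturate simultaneously at $\step^\ast=\mu^\ast/(\xi K)=\nu^\ast/(\eta K)$. At $\step=\step^\ast$ one has at once $\updmap(\stratcl)=1$ and $\updmap(\stratcr)=0$, whence $\updmap(\stratcr)=0<\stratcl$, $\updmap(\stratcl)=1>\stratcr$ and $\updmap^2(\stratcl)=\updmap(1)=1>\stratcr$, so all three inequalities hold strictly (as $0<\stratcl<\stratcr<1$). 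Being open conditions they persist on a left-neighbourhood $(\step_0,\step^\ast]$ of the admissibility threshold, which furnishes the required regime of large $\step$.

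The main obstacle is precisely that one cannot let $\step\to\infty$: for fixed $\eta,\xi$ the left overshoot $\updmap(\stratcl)$ is unbounded in $\step$, so $\updmap$ leaves $[0,1]$ past a finite threshold, and the well-definedness constraint is therefore active exactly in the chaotic regime. A generic choice of $\eta,\xi$ saturates one branch strictly before the other, giving for instance $\updmap(\stratcl)>\stratcr$ while $\updmap(\stratcr)>\stratcl$, so that condition \ref{lem:chaos_cond1} fails; the balancing choice $\eta/\xi=\nu^\ast/\mu^\ast$ is what forces both extremal overshoots to occur together. The remaining analytic work — existence and monotonicity of $\mu^\ast,\nu^\ast$ and the location of $\stratcl,\stratcr$ — is routine calculus. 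When $p=\tfrac12$ the map is conjugate to itself under $x\mapsto1-x$, forcing $\mu^\ast=\nu^\ast$ and $\eta=\xi$, in agreement with Proposition~\ref{thm:imitation12}.
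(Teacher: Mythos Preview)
Your argument is correct and follows the same blueprint as the paper's: apply Proposition~\ref{lem:chaos_cond} with $z_l,z_r$ near the two critical points, drive the parameters to the threshold at which $\updmap(\stratcl)=1$ and $\updmap(\stratcr)=0$ simultaneously, and invoke period~$3$ via \cite{li1982odd,liyorke,sha}. The paper carries this out by explicit computation: it fixes $\eta=\tfrac{4p}{(1-p)^2(b-d)}$ and $\xi=\tfrac{4}{p(b-d)}$ (precisely your balancing choice, scaled so that $\step^\ast=1$), computes $\stratcl=\tfrac{p}{2}$, $\stratcr=\tfrac{p+1}{2}$ for the extremal map $\updmap_{\max}$, and then produces an explicit threshold $\step_p<1$ by bounding each of the three inequalities separately; the case $p>\tfrac12$ is handled afterwards by conjugacy under $x\mapsto 1-x$. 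Your envelope-theorem and continuity argument is more abstract but buys you uniformity in $p$, so the case split disappears; conversely, the paper's explicit route yields a concrete formula for the chaos threshold $\step_p$ rather than merely a left-neighbourhood of $\step^\ast$. One further small difference: the paper applies Proposition~\ref{lem:chaos_cond} with the \emph{fixed} reference points $z_l=\tfrac{p}{2}$, $z_r=\tfrac{p+1}{2}$ (the critical points of $\updmap_{\max}$) for all $\step$, rather than the $\step$-dependent critical points you use, which makes the explicit estimates cleaner at the cost of a Lipschitz bound on the right branch.
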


We will restrict the range of parameters $\eta$, $\xi$ and $\step$ to a set for which $\eqref{eq:PPI_map_pert}$ forms a family of interval maps and the switch rates 
$\switch_{AB}$ and $\switch_{BA}$ derived from \eqref{eq:PPI_cond_imit_rates_pert} 
stay in the unit interval.

\begin{remark}
The choice of parameters $\eta$ and $\xi$ in \eqref{eq:PPI_cond_imit_rates_pert} reflects how important the payoff difference is for a revising agent. As the change of the strategy may happen only for a strategy, which gives larger payoff (thus, for any population state at most one of conditional switch rates is positive), 
a larger increase in value of a multiplier, results in a greater weight attached to the advantage of the benchmark strategy $j$ over the incumbent strategy $i$. This perspective is consistent with recent findings in machine learning and economics \cite{daskalakis2018last,stark2017class,obloj2017organization,quintana2016relative}.
\end{remark}

\subsection{Skeleton of the proof of Theorem \ref{thm:perturbedPPIchaos}.}
Let us consider a game $\game\equiv\gamefull$ defined by \eqref{eq:game} with the interior Nash equilibrium $p$. In the following, we present the skeleton of the proof of Theorem \ref{thm:perturbedPPIchaos} in three steps, while the details of the proof are included in Appendix~\ref{app:proof-thm2}.

\vspace{0.2cm}

\noindent{\bf Step 1. Introducing the set of parameters for which \eqref{eq:PPI_map_pert} forms a family of interval maps.}
The pair $([0,1],\updmap)$, with $\updmap$ defined as in \eqref{eq:PPI_map_pert}  will be a well-defined dynamical system  only when $\updmap : [0,1] \to [0,1]$. To meet this condition we need to restrict the set of values of parameters $\eta$ and $\xi$. For this purpose we define the set:
\[
 \Delta_p := \left\{ (\nu_1,\nu_2,\nu_3) \in (0,\infty)^3 : \nu_1 \leqslant \frac{4p}{(1-p)^2(b-d)}, \nu_2 \leqslant \frac{4}{p(b-d)} \text{ and } \nu_3 \in (0,1] \right\}.
\]

First we consider the case $p \in (0,\frac 12]$. For $p\in(\frac{1}{2},1)$, it follows from the topological conjugacy argument, which we describe in Step 3. The following lemma binds the parameters of the model with the set $\Delta_p$ such that the game dynamics is well-defined. 

\begin{lemma}\label{cor:PPI_interval_map}
Let $\eq\in(0,\frac{1}{2}]$ be the Nash equilibrium of the game $\game$.
Then for any parameters $(\eta,\xi,\step)\in\Delta_p$ we have $\updmap([0,1])=[0,1]$. In particular, $\updmap$ is an interval map.
\end{lemma}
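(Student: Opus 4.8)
The plan is to check directly that $\updmap$ maps $[0,1]$ into itself, lap by lap, by establishing a lower and an upper bound on each of $[0,p)$ and $[p,1]$. The key structural observation is that, since \eqref{eq:game} assumes $d<b$, the factor $b-d$ is strictly positive; combined with the sign of the remaining factors this makes two of the four bounds immediate. On $[0,p)$ every factor of the correction term $\step\,\xi\,\tfrac{b-d}{p}\,x(1-x)(p-x)$ is nonnegative, so $\updmap(x)\ge x\ge 0$; symmetrically, on $[p,1]$ the term $\step\,\eta\,\tfrac{b-d}{p}\,x(1-x)(x-p)$ is nonnegative, so $\updmap(x)\le x\le 1$. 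Hence only the upper bound on $[0,p)$ and the lower bound on $[p,1]$ require real work, and each reduces to maximizing a single quadratic.

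For the upper bound on $[0,p)$, I would rewrite the inequality $\updmap(x)\le 1$ as $\step\,\xi\,\tfrac{b-d}{p}\,x(1-x)(p-x)\le 1-x$ and divide by $1-x>0$ (note $x<p\le\tfrac12$), obtaining the equivalent claim $\step\,\xi\,\tfrac{b-d}{p}\,x(p-x)\le 1$. The quadratic $x(p-x)$ is maximized at $x=p/2$ with value $p^2/4$, so it suffices that $\step\,\xi\,\tfrac{b-d}{p}\cdot\tfrac{p^2}{4}\le 1$, i.e. $\step\,\xi\le \tfrac{4}{p(b-d)}$. This holds because $(\eta,\xi,\step)\in\Delta_p$ furnishes $\xi\le\tfrac{4}{p(b-d)}$ together with $\step\le 1$.

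For the lower bound on $[p,1]$, I would rewrite $\updmap(x)\ge 0$ as $\step\,\eta\,\tfrac{b-d}{p}\,x(1-x)(x-p)\le x$ and divide by $x\ge p>0$, reducing it to $\step\,\eta\,\tfrac{b-d}{p}\,(1-x)(x-p)\le 1$. Here $(1-x)(x-p)$ is maximized at $x=\tfrac{1+p}{2}$ with value $\tfrac{(1-p)^2}{4}$, so it suffices that $\step\,\eta\,\tfrac{b-d}{p}\cdot\tfrac{(1-p)^2}{4}\le 1$, i.e. $\step\,\eta\le\tfrac{4p}{(1-p)^2(b-d)}$, which again follows from the $\Delta_p$ constraint $\eta\le\tfrac{4p}{(1-p)^2(b-d)}$ and $\step\le 1$. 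Combining the four bounds yields $\updmap([0,1])\subseteq[0,1]$; since $\updmap$ is continuous with $\updmap(0)=0$, $\updmap(p)=p$ and $\updmap(1)=1$, connectedness of the image (or the intermediate value theorem) upgrades this to $\updmap([0,1])=[0,1]$, so $\updmap$ is an interval map.

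The computations are routine, so the one point I would treat with care is the location of the two maximizers relative to their laps: $p/2$ lies in $[0,p]$ and $\tfrac{1+p}{2}$ lies in $[p,1]$ for every $p\in(0,1)$, so the interior vertex values $p^2/4$ and $(1-p)^2/4$ are genuinely the maxima on the respective intervals rather than boundary values. This is precisely what makes the two inequalities defining $\Delta_p$ the sharp invariance thresholds at $\step\le 1$; the restriction $p\le\tfrac12$ is merely organizational, with the complementary range $p\in(\tfrac12,1)$ deferred to the conjugacy argument of Step~3.
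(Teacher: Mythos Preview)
Your proof is correct. The paper argues slightly differently: it first isolates the extremal map $\updmap_{\max}$ (the one with $(\eta,\xi,\step)$ set to the upper endpoints of $\Delta_p$), proves in a separate lemma that $\updmap_{\max}([0,1])=[0,1]$ by locating its critical points $\stratcl=\tfrac{p}{2}$ and $\stratcr=\tfrac{1+p}{2}$ via a derivative computation and checking $\updmap_{\max}(\stratcl)=1$, $\updmap_{\max}(\stratcr)=0$, and then sandwiches the general $\updmap$ between the identity and $\updmap_{\max}$ on each lap. Your argument collapses these two steps into one by directly maximizing the quadratics $x(p-x)$ and $(1-x)(x-p)$, which is more elementary and self-contained for this lemma taken in isolation. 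The trade-off is that the paper's auxiliary lemma also records the critical-point locations of $\updmap_{\max}$, which are reused downstream (e.g.\ as the points $z_l,z_r$ in the chaos argument); your route gets the invariance cleanly but would have to revisit those computations later. Your closing remark that the restriction $p\le\tfrac12$ is only organizational is also correct: nothing in your bounds uses it.
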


We show Lemma \ref{cor:PPI_interval_map} in two steps: first we prove it for maximal possible values of  $\eta$, $\xi $ and $\delta$. We then extend the result for other values of those parameters.\footnote{For other properties of the family of maps \eqref{eq:PPI_map_pert} see Lemma \ref{prop:f_max} in Appendix \ref{app:proof-thm2}.}

\vspace{0.2cm}

\noindent{\bf Step 2. Proof of Theorem \ref{thm:perturbedPPIchaos} for $p\in(0,\frac{1}{2}]$.}
Note that if $\eq=\frac 12$, then for $\eta=\xi$ the perturbed conditional imitation rates from \eqref{eq:PPI_cond_imit_rates_pert} satisfy condition \eqref{eq:iswitchimit}. So, this case is a consequence of Proposition \ref{thm:imitation12}. Thus, we restrict our considerations to the case when  $\eq\in(0,\frac{1}{2})$. We will show that there exists a threshold value $\step_\eq$, 
exceeding of which guarantees obtaining chaotic dynamics for an appropriate modification of PPI protocol.

\begin{proposition}\label{Thm2:step2}
Let $p\in(0,\frac{1}{2})$ be the Nash equilibrium of the game $\game$ and let $\eta = \frac{4p}{(1-p)^2(b-d)}$ and $\xi = \frac{4}{p(b-d)}$. Then there exists $\step_p \in (0,1)$ such that the map $\updmap$ is Li-Yorke chaotic and has periodic orbit of any period for each $\step\in(\step_p,1]$.
\end{proposition}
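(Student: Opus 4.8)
The plan is to reduce everything to a single application of Proposition~\ref{lem:chaos_cond} at the two explicit points $z_l=\tfrac p2$ and $z_r=\tfrac{1+p}2$, verifying its hypotheses \ref{lem:chaos_cond1} and \ref{lem:chaos_cond2} (together with $\updmap^2(z_l)>z_r$) first at $\step=1$ and then, by continuity in $\step$, on a whole interval $(\step_p,1]$. To begin, I would note that with $\eta=\frac{4p}{(1-p)^2(b-d)}$ and $\xi=\frac{4}{p(b-d)}$ the triple $(\eta,\xi,\step)$ belongs to $\Delta_p$ for every $\step\in(0,1]$, so Lemma~\ref{cor:PPI_interval_map} makes $\updmap\colon[0,1]\to[0,1]$ a well-defined continuous (bimodal) map to which Proposition~\ref{lem:chaos_cond} applies; since $p\in(0,\tfrac12)$ we also have $0<z_l<z_r<1$, as the proposition requires.

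The key computation is at $\step=1$. Inserting the prescribed multipliers collapses the payoff coefficients, $\xi\frac{b-d}{p}=\frac{4}{p^2}$ and $\eta\frac{b-d}{p}=\frac{4}{(1-p)^2}$, so that on the two laps
\[
\updmap(x)=x+\tfrac{4}{p^2}\,x(1-x)(p-x)\ \text{ on }[0,p),\qquad
\updmap(x)=x\bigl(1-\tfrac{4}{(1-p)^2}(1-x)(x-p)\bigr)\ \text{ on }[p,1].
\]
Evaluating at $z_l=\tfrac p2$ and $z_r=\tfrac{1+p}2$ I expect to find $\updmap(z_l)=\tfrac p2+(1-\tfrac p2)=1$ and $\updmap(z_r)=\tfrac{1+p}2\cdot 0=0$; and since $1$ is a fixed point of $\updmap$, also $\updmap^2(z_l)=\updmap(1)=1$. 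Thus at $\step=1$ one has $\updmap(z_r)=0<z_l$, $\updmap(z_l)=1>z_r$ and $\updmap^2(z_l)=1>z_r$, so conditions \ref{lem:chaos_cond1}, \ref{lem:chaos_cond2} and $\updmap^2(z_l)>z_r$ all hold with strict inequalities.

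I would then propagate these strict inequalities off $\step=1$. For the fixed points $z_l,z_r$ the maps $\step\mapsto\updmap(z_l)$, $\step\mapsto\updmap(z_r)$ and $\step\mapsto\updmap^2(z_l)$ are continuous (each branch is affine in $\step$ and $\updmap$ is continuous across $x=p$, so $\updmap^2$ is jointly continuous in $(x,\step)$), whence strictness at $\step=1$ yields some $\step_p\in(0,1)$ for which all three inequalities persist throughout $(\step_p,1]$. For each such $\step$, Proposition~\ref{lem:chaos_cond} produces $x\in(z_l,z_r)$ with $\updmap(x)<x<\updmap^3(x)$; by the Li-Misiurewicz-Panigiani-Yorke theorem~\cite{li1982odd} this forces a period-$3$ orbit, hence by Sharkovsky's theorem~\cite{sha} periodic orbits of every period, and by~\cite{liyorke} Li-Yorke chaos---which is exactly the assertion of the proposition.

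The only substantive step is the computation at $\step=1$: it is the maximality of the admissible multipliers $\eta,\xi$ that drives the left critical value exactly to $1$ and the right critical value exactly to $0$, placing both conditions of Proposition~\ref{lem:chaos_cond} simultaneously at their boundary, so that strict inequalities (and hence a robust neighborhood in $\step$) are still available. Everything after that is continuity bookkeeping; in particular I would emphasize that $z_l,z_r$ need not be the actual critical points of $\updmap$ for $\step<1$, since Proposition~\ref{lem:chaos_cond} only requires the three displayed inequalities at $z_l,z_r$, which is precisely what makes the fixed choice $z_l=\tfrac p2$, $z_r=\tfrac{1+p}2$ convenient and lets us avoid tracking the critical points as functions of $\step$.
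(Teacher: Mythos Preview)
Your proof is correct and takes a genuinely different route from the paper. Both arguments use the same test points $z_l=\tfrac{p}{2}$ and $z_r=\tfrac{1+p}{2}$ and appeal to Proposition~\ref{lem:chaos_cond}, but the paper works out explicit thresholds: it defines $\step_1=\tfrac{1}{p+1}$, $\step_2=\tfrac{1}{2-p}$ and a more involved $\step_3$ (coming from a Lipschitz estimate $|1-\updmap^2(z_l)|\le K|1-\updmap(z_l)|$ with $K=\sup_{x\in(p,1)}\updmap'(x)$), and then shows directly that for $\step>\step_i$ the corresponding inequality holds. You instead evaluate everything at $\step=1$, observe that the three strict inequalities $\updmap(z_r)=0<z_l$, $\updmap(z_l)=1>z_r$, $\updmap^2(z_l)=\updmap(1)=1>z_r$ hold at the endpoint, and then invoke joint continuity of $\updmap$ in $(x,\step)$ to propagate them to a left neighborhood $(\step_p,1]$. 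Your argument is shorter and avoids the algebra (in particular the Lipschitz bound for the third condition), at the cost of losing the explicit formula for $\step_p$ that the paper obtains. Your closing remark---that $z_l,z_r$ need not be critical points of $\updmap$ for $\step<1$ since Proposition~\ref{lem:chaos_cond} only needs the three inequalities at those fixed points---is exactly the observation that makes the fixed choice work for both approaches.
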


In the proof of Proposition \ref{Thm2:step2} we determine the threshold $\step_p$ such that the conditions \ref{lem:chaos_cond1}, \ref{lem:chaos_cond2} and $f^2(z_l)>z_r$ of Proposition \ref{lem:chaos_cond} hold.
In order to simplify the calculations, we show the case of maximal possible perturbations (expressed by the values $\eta = \frac{4\eq}{(1-\eq)^2(\gainb-\gaind)}$ and $\xi = \frac{4}{\eq(\gainb-\gaind)}$). However, chaotic behavior of the family of maps $\updmap$ can also be observed for smaller values of the parameters $\eta$ and $\xi$.
From Proposition \ref{Thm2:step2} follows Theorem~\ref{thm:perturbedPPIchaos} for $p\in(0,\frac{1}{2})$.

\vspace{0.2cm}

\noindent{\bf Step 3. Proof of Theorem \ref{thm:perturbedPPIchaos} for $\widetilde{p}\in(\frac{1}{2},1)$.}
Using the topological conjugacy argument, we can extend the above result to any game $\widetilde{\mathcal{G}}\equiv\widetilde{\mathcal{G}}(\mathcal{A},\widetilde{\payv})$ defined by \eqref{eq:game} with the interior Nash equilibrium $\widetilde{\eq}\in(\frac{1}{2},1)$. Namely, then there exists a game $\game\equiv\gamefull$ with $p:=1-\widetilde{\eq}\in(0,\frac{1}{2})$ such that for any triplet $(\eta,\xi,\step)$, the condition $(\eta,\xi,\step)\in\Delta_p$ is equivalent to $(\xi,\eta,\step)\in\Delta_{\widetilde{p}}$. Moreover, with conditional imitation rates defined as
\begin{equation}\label{PPIiswitch-tilda}
\widetilde{\iswitch}_{AB}(\stratx):=\iswitch_{BA}(1-\stratx) \quad\text{and}\quad \widetilde{\iswitch}_{BA}(\stratx):=\iswitch_{AB}(1-\stratx),
\end{equation}
with $\iswitch_{AB}$ and $\iswitch_{BA}$ given by \eqref{eq:PPI_cond_imit_rates_pert}, we obtain the following 
maps of the unit interval introduced by the perturbed PPI protocol
\begin{equation}\label{thm2:step3:update_map}
	\widetilde{\updmap}(x)=
	\begin{cases}
		x(1+\step\eta\frac{b-d}{\widetilde{p}}(1-x)(\widetilde{\eq}-x)), &\text{for}\ x\in[0,\widetilde{\eq})\\
		x(1-\step\xi\frac{b-d}{\widetilde{p}}(1-x)(x-\widetilde{\eq})), &\text{for}\ x\in[\widetilde{\eq},1]\\
	\end{cases}.
\end{equation}

Then the following fact implies Theorem \ref{thm:perturbedPPIchaos} in the remaining case.

\begin{proposition}\label{Thm2:step3}
Let $\widetilde{p} \in (\frac 12, 1)$ be the Nash equilibrium of the game $\widetilde{\game}$ and let $\xi = \frac{4\widetilde{p}}{(1-\widetilde{p})^2(b-d)}$ and $\eta = \frac{4}{\widetilde{p}(b-d)}$. Then there exists $\step_{\widetilde{p}} \in (0,1)$ such that the map $\widetilde{\updmap}$ defined by \eqref{thm2:step3:update_map} is Li-Yorke chaotic and has periodic orbit of any period for each $\step\in(\step_{\widetilde{p}},1]$.
\end{proposition}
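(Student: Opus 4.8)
The plan is to prove Proposition \ref{Thm2:step3} by a \emph{topological conjugacy} argument that reduces it to the already-established Proposition \ref{Thm2:step2}. The conjugating map will be the reflection $\psi\colon[0,1]\to[0,1]$, $\psi(x)=1-x$, an involutive homeomorphism with $\psi^{-1}=\psi$. The key claim is that $\widetilde{\updmap}=\psi\circ\updmap\circ\psi$, where $\updmap$ is the map of Proposition \ref{Thm2:step2} associated with the auxiliary game $\game$ having Nash equilibrium $p:=1-\widetilde{p}\in(0,\tfrac12)$. Since $\psi$ interchanges the strategies $A$ and $B$, it swaps the two switch directions, which is exactly why the imitation rates for $\widetilde{\game}$ are defined through \eqref{PPIiswitch-tilda} and why the roles of $\eta$ and $\xi$ are exchanged.

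First I would record that the values $\xi=\tfrac{4\widetilde{p}}{(1-\widetilde{p})^2(b-d)}$ and $\eta=\tfrac{4}{\widetilde{p}(b-d)}$ in the statement are precisely the images, under this swap-and-substitute, of the maximal-perturbation values $\eta'=\tfrac{4p}{(1-p)^2(b-d)}$ and $\xi'=\tfrac{4}{p(b-d)}$ used by Proposition \ref{Thm2:step2} for $\game$: substituting $p=1-\widetilde{p}$ gives $\eta=\eta'\widetilde{p}/p$ and $\xi=\xi'\widetilde{p}/p$. With these identifications, and using the equivalence $(\eta,\xi,\step)\in\Delta_{\widetilde{p}}\Leftrightarrow(\xi,\eta,\step)\in\Delta_p$ noted above, Lemma \ref{cor:PPI_interval_map} guarantees that $\updmap$, and hence $\widetilde{\updmap}$, is a well-defined interval map on the relevant parameter range.

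The main computation is to verify the conjugacy identity $\widetilde{\updmap}(x)=1-\updmap(1-x)$ directly from the formulas. For $x<\widetilde{p}$ one has $1-x\ge p$, so $\updmap(1-x)=(1-x)\bigl(1-\step\eta'\tfrac{b-d}{p}x(\widetilde{p}-x)\bigr)$; subtracting from $1$ and simplifying yields $x\bigl(1+\step\eta'\tfrac{b-d}{p}(1-x)(\widetilde{p}-x)\bigr)$, which matches the top branch of \eqref{thm2:step3:update_map} once $\eta'/p=\eta/\widetilde{p}$ is used. The symmetric computation on $x\ge\widetilde{p}$ handles the bottom branch with $\xi'/p=\xi/\widetilde{p}$. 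This confirms $\widetilde{\updmap}=\psi\circ\updmap\circ\psi$.

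Finally, I would invoke the invariance of the relevant dynamical features under conjugacy. By Proposition \ref{Thm2:step2} there is $\step_p\in(0,1)$ such that $\updmap$ is Li--Yorke chaotic and has periodic orbits of every period for all $\step\in(\step_p,1]$. Because $\psi$ is a homeomorphism, it carries periodic orbits of $\updmap$ to periodic orbits of $\widetilde{\updmap}$ of the same period, and it maps any scrambled set of $\updmap$ to a scrambled set of $\widetilde{\updmap}$ (as $\psi$ and $\psi^{-1}$ are uniformly continuous, the $\liminf$/$\limsup$ conditions of \eqref{eq:chaos} are preserved). Setting $\step_{\widetilde{p}}:=\step_p$ then gives the claim. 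The only genuinely delicate point is the branch-by-branch verification of the conjugacy identity, since it is there that the exact parameter swap must be checked; once that is in place, every dynamical conclusion transfers for free.
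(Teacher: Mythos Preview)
Your approach is correct and follows essentially the same route as the paper: reduce to Proposition~\ref{Thm2:step2} via the topological conjugacy $\psi(x)=1-x$. The paper packages the conjugacy verification as an appeal to Lemma~\ref{lem:symmetry} (checking condition~\eqref{eq:imit_symmetry} for the imitation rates \eqref{PPIiswitch-tilda}) and specifies the auxiliary game $\game$ explicitly by swapping the payoff matrix entries, whereas you verify $\widetilde{\updmap}=\psi\circ\updmap\circ\psi$ by a direct branch-by-branch computation and leave the choice of $\game$ implicit; both arrive at the same place, and your identities $\eta'/p=\eta/\widetilde{p}$, $\xi'/p=\xi/\widetilde{p}$ are exactly what makes the conjugacy go through (note that the map $\updmap$ with the maximal parameters depends only on the products $\eta'\tfrac{b-d}{p}=\tfrac{4}{(1-p)^2}$ and $\xi'\tfrac{b-d}{p}=\tfrac{4}{p^2}$, so the particular $b-d$ of the auxiliary game is immaterial).
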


To prove Proposition \ref{Thm2:step3} we first show that under the assumption of the maximal values of the perturbations $\eta$ and $\xi$ the map $\widetilde{\updmap}$ in \eqref{thm2:step3:update_map} is an interval (self)map. Then, as dynamical systems $([0,1],\updmap)$ and $([0,1],\widetilde{\updmap})$ are topologically conjugate, by results for $\eq<\frac 12$ (showed in Step 2) Li-Yorke chaos for the map $\widetilde{\updmap}$ is implied. Thus, the proof of Theorem~\ref{thm:perturbedPPIchaos} is completed.

\subsection{Observability of chaos.}
According to our interpretation, the parameters $(\eta,\xi) = \left( \frac{4p}{(1-p)^2(b-d)},\frac{4}{p(b-d)} \right)$ correspond to the situation in which agents attach the maximum possible importance to the difference in the payoffs; even a small value of $\pospart{\pay_{j}(\stratx) - \pay_{i}(\stratx)}$ can hugely influence the decision to change the strategy. From Theorem~\ref{thm:perturbedPPIchaos} it follows that in this case increasing $\step$ will eventually result in the emergence of chaos (see Figure \ref{fig:bif1}).
\begin{figure}[h]
    \centering
    \includegraphics[width=0.49\linewidth]{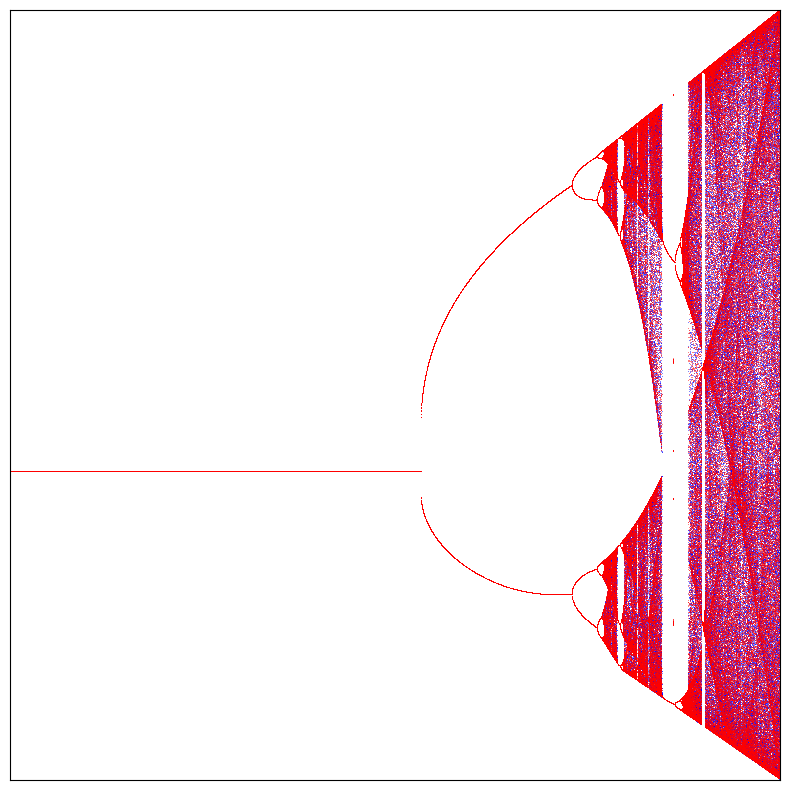}
    \includegraphics[width=0.49\linewidth]{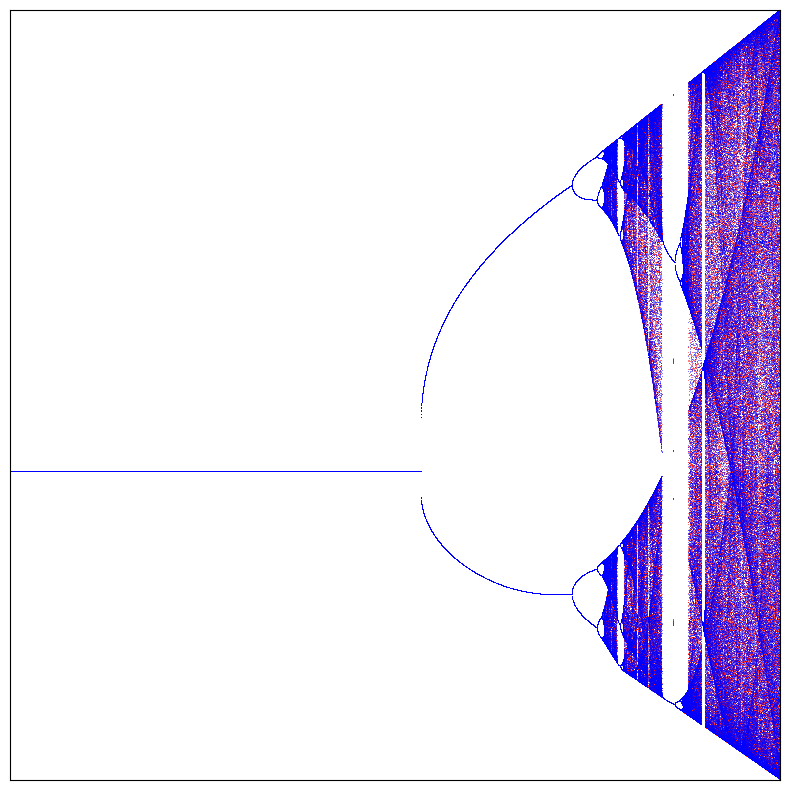}
    \caption{Bifurcation diagrams for the dynamical system of the map $\updmap$ in \eqref{eq:PPI_map_pert} for the maximal values of $\eta$ and $\xi$ and for $p=0.4$. The horizontal axis is the parameter $\delta \in (0,1]$. For each $\delta \in (0,1]$, 20000 iterations of the starting points were made and then next 100 iterations of the map $\updmap$ were plotted.
On the left diagram first the iterations of the right critical point of $\updmap$ were plotted in \textcolor{blue}{blue}, then the iterations of the left critical point of $\updmap$ were plotted in \textcolor{red}{red}. On the right diagram the order of plotting is reversed. For small values of $\delta$ the fixed point $p$ attracts all trajectories of the dynamical system. As $\delta$ increases the point $p$ loses stability and we observe the period-doubling route to chaos.}
    \label{fig:bif1}
\end{figure}
Furthermore, the proof of this theorem indicates the threshold value of the parameter $\step$, above which chaos is guaranteed to occur. However, it does not provide any information about the stability of the point $\eq$, which may have a significant impact on the observability of the chaos (see Figure \ref{fig:bif2}). In this context, to complement the picture of the dynamics of $\updmap$ provided by Theorem~\ref{thm:perturbedPPIchaos} we present lemma on the behavior of the interior Nash equilibrium. 
\begin{figure}[h]
    \centering
    \includegraphics[width=0.5\linewidth]{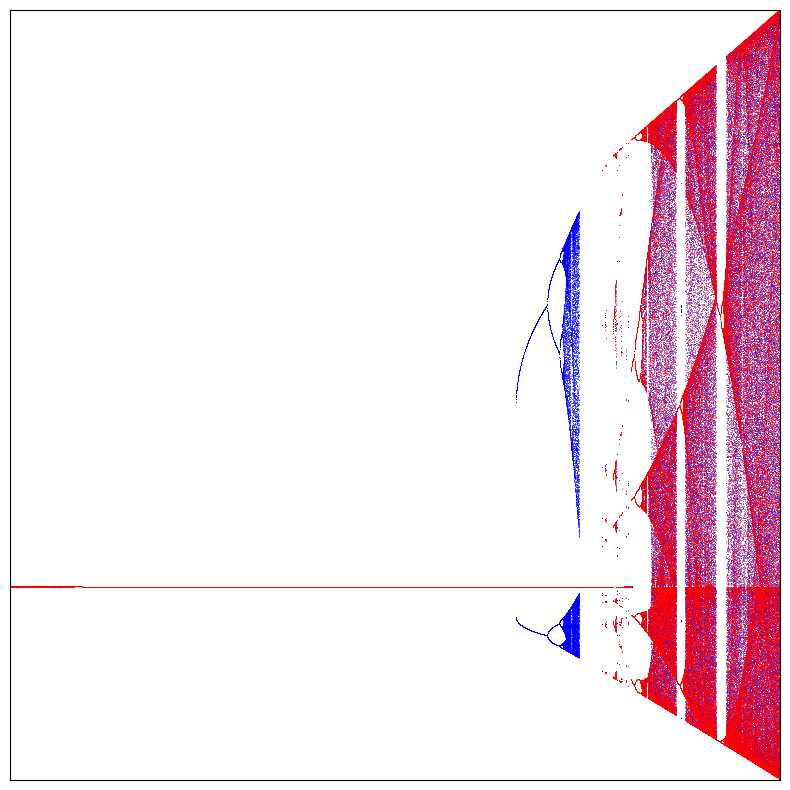}
    \caption{Bifurcation diagram for the dynamical system of the map $\updmap$ in \eqref{eq:PPI_map_pert} for the maximal values of $\eta$ and $\xi$ and for $p=0.25$. The horizontal axis is the parameter $\delta \in [0,1]$. For each $\delta \in [0,1]$, 20000 iterations of the starting points were made and then next 100 iterations of the map $\updmap$ were plotted.
    On the diagram first the iterations of the right critical point of $\updmap$ were plotted in \textcolor{blue}{blue}, then the iterations of the left critical point of $\updmap$ were plotted in \textcolor{red}{red}. For $\delta$ approximately in $(0.71799,0.73884)$ we can observe both: the locally attracting fixed point $p$ that attracts the trajectory of the left critical point and the chaotic attractor that attracts the trajectory of the right critical point.}
    \label{fig:bif2}
\end{figure}

\begin{lemma}\label{prop:dynamics_PPI}
Let $\game$ be a population game defined by~\eqref{eq:game} with the interior Nash equilibrium $p\in(0,\frac{1}{2})$
 and $\updmap$ be a map of the form~\eqref{eq:PPI_map_pert} with $(\eta,\xi) = \left( \frac{4p}{(1-p)^2(b-d)},\frac{4}{p(b-d)} \right)$. Let $\step\in(0,1]$. Then
	\begin{enumerate}[$(1)$]
        \item\label{prop:dynamics_PPI_0} Nash equilibrium $\eq\in (\frac 13,\frac 12)$ is repelling for sufficiently large $\step$;       
		\item\label{prop:dynamics_PPI_1} for $\eq<\frac 13$ we have that $\updmap'_-(p)<-1$ if $\step$ is sufficiently large, however  $|\updmap'_+(p)|\leq 1$ for every $\step$.
	\end{enumerate}
\end{lemma}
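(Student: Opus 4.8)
The plan is to obtain the two one-sided derivatives of $\updmap$ at the fixed point $p$ in closed form directly from \eqref{eq:PPI_map_pert}, and then decide stability by comparing their magnitudes with $1$. Writing $\kappa := \frac{b-d}{p}>0$, I would record the left branch as $\updmap(x)=x+\step\xi\kappa\,\phi(x)$ and the right branch as $\updmap(x)=x-\step\eta\kappa\,x(1-x)(x-p)$, where $\phi(x):=x(1-x)(p-x)$; in both cases the cubic factor vanishes at $x=p$, which re-confirms $\updmap(p)=p$. Differentiating $\phi$ by the product rule, the two terms carrying the factor $(p-x)$ vanish at $x=p$, leaving $\phi'(p)=-p(1-p)$; since the right-branch cubic is $-\phi$, its derivative at $p$ is $+p(1-p)$.

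The key simplification I would then exploit is that the prescribed maximal perturbations collapse the parameter-dependent factors: $\xi=\frac{4}{p(b-d)}$ gives $\xi\kappa=\frac{4}{p^{2}}$, and $\eta=\frac{4p}{(1-p)^{2}(b-d)}$ gives $\eta\kappa=\frac{4}{(1-p)^{2}}$. Substituting $\phi'(p)=-p(1-p)$ then yields the two clean expressions
\[
\updmap'_-(p)=1-\step\,\xi\kappa\,p(1-p)=1-\frac{4\step(1-p)}{p},
\qquad
\updmap'_+(p)=1-\step\,\eta\kappa\,p(1-p)=1-\frac{4\step p}{1-p}.
\]
Everything after this point is elementary threshold analysis against the admissible range $\step\in(0,1]$.

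For part \ref{prop:dynamics_PPI_0}, with $p\in(\frac13,\frac12)$ both coefficients are positive, so each derivative strictly decreases in $\step$; the conditions $\updmap'_-(p)<-1$ and $\updmap'_+(p)<-1$ read $\step>\frac{p}{2(1-p)}$ and $\step>\frac{1-p}{2p}$ respectively. Since $p<\frac12$ the second threshold dominates (as $(1-p)^2>p^2$), and $p>\frac13$ forces $\frac{1-p}{2p}<1$, so both one-sided slopes drop below $-1$ once $\step$ exceeds it, making $p$ repelling. For part \ref{prop:dynamics_PPI_1}, the identical inequality $\step>\frac{p}{2(1-p)}$ gives $\updmap'_-(p)<-1$, and here $p<\frac13$ makes $\frac{p}{2(1-p)}<\frac14$, so it is attainable; meanwhile $\updmap'_+(p)\le 1$ holds for free because $\frac{4\step p}{1-p}\ge 0$, and $\updmap'_+(p)\ge -1$ is equivalent to $\step\le\frac{1-p}{2p}$, which is automatic since $p<\frac13$ makes $\frac{1-p}{2p}>1\ge\step$. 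Thus $|\updmap'_+(p)|\le 1$ for every admissible $\step$.

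I expect no genuine obstacle here: the content is a one-variable derivative computation followed by comparisons of quantities linear in $\step$ with $\pm 1$. The only point demanding care is the bookkeeping that turns $\xi\kappa$ and $\eta\kappa$ into $4/p^{2}$ and $4/(1-p)^{2}$ --- it is exactly this cancellation that produces the self-dual pair of slopes $1-\frac{4\step(1-p)}{p}$ and $1-\frac{4\step p}{1-p}$ and makes the boundary value $p=\frac13$ (where $\frac{1-p}{2p}=1$) the precise dividing line between the two regimes.
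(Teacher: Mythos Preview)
Your proposal is correct and follows essentially the same approach as the paper: compute the two one-sided derivatives $\updmap'_{\pm}(p)$ explicitly (obtaining $1-\tfrac{4\step(1-p)}{p}$ and $1-\tfrac{4\step p}{1-p}$), then do the threshold analysis in $\step$ against $\pm 1$. Your use of $\kappa$ and $\phi$ to streamline the differentiation is a minor notational convenience, but the substance --- the cancellations $\xi\kappa=4/p^2$, $\eta\kappa=4/(1-p)^2$, the thresholds $\tfrac{p}{2(1-p)}$ and $\tfrac{1-p}{2p}$, and the role of $p=\tfrac13$ --- is identical to the paper's argument.
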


Lemma~\ref{prop:dynamics_PPI} describes stability of the fixed point of  $\updmap$ for the maximal perturbations (rescaling).
The condition~\ref{prop:dynamics_PPI_0} implies that for any $\eq \in(\frac 13,\frac{1}{2})$ 
increasing the parameter $\step$ will eventually lead to the loss of stability of the Nash equilibrium. By combining this observation with Theorem~\ref{thm:perturbedPPIchaos} and using conjugacy argument we obtain the following corollary. 

\begin{corollary}\label{cor:perturbed_PPI_repelling}
Let $\eq\in(\frac{1}{3},\frac{2}{3})$. Then there exist $\xi$, $\eta$ and $\step_p \in (0,1)$ such that the map $\updmap$ defined by ~\eqref{eq:PPI_map_pert} 
is Li-Yorke chaotic and has periodic orbit of any period for $\step>\step_{\eq}$. Moreover, the Nash equilibrium $\eq$ is repelling.
\end{corollary}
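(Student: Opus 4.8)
The plan is to obtain the corollary by combining the chaos conclusion of Theorem~\ref{thm:perturbedPPIchaos}, in the sharper quantitative form of Propositions~\ref{Thm2:step2} and~\ref{Thm2:step3}, with the spectral information on the fixed point supplied by Lemma~\ref{prop:dynamics_PPI}. The crucial point is that both of these results are stated for the \emph{same} choice of maximal perturbations $(\eta,\xi)=\bigl(\tfrac{4\eq}{(1-\eq)^2(\gainb-\gaind)},\tfrac{4}{\eq(\gainb-\gaind)}\bigr)$, so they describe one and the same map $\updmap$ from~\eqref{eq:PPI_map_pert}; the whole argument then reduces to intersecting the two admissible ranges of $\step$ and transporting everything across $\eq=\tfrac12$ by the conjugacy $h(x)=1-x$. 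I would organize the proof in three cases according to the position of $\eq$ relative to $\tfrac12$.

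For $\eq\in(\tfrac13,\tfrac12)$ I fix the maximal perturbations above. Proposition~\ref{Thm2:step2} then furnishes a chaos threshold $\step_\eq^{\mathrm{ch}}\in(0,1)$ such that $\updmap$ is Li--Yorke chaotic with periodic orbits of every period whenever $\step\in(\step_\eq^{\mathrm{ch}},1]$, while condition~\ref{prop:dynamics_PPI_0} of Lemma~\ref{prop:dynamics_PPI} furnishes a repelling threshold $\step_\eq^{\mathrm{rep}}<1$ beyond which $\eq$ is repelling. Setting $\step_\eq:=\max\{\step_\eq^{\mathrm{ch}},\step_\eq^{\mathrm{rep}}\}$, which still lies in $(0,1)$, both properties hold simultaneously for every $\step\in(\step_\eq,1]$, which is exactly the assertion. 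The boundary value $\eq=\tfrac12$ is handled the same way, using Proposition~\ref{thm:imitation12} (with $\eta=\xi$) for the chaos and the direct one-sided computation $\updmap'_{\pm}(\tfrac12)=1-4\step$, which is negative and of modulus exceeding $1$ once $\step>\tfrac12$, for the repulsion.

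For $\eq\in(\tfrac12,\tfrac23)$ I pass to $p:=1-\eq\in(\tfrac13,\tfrac12)$ and apply the previous case to the auxiliary game with equilibrium $p$, obtaining $\eta,\xi$ and $\step_p\in(0,1)$ for which $\updmap$ is chaotic and has $p$ repelling on $(\step_p,1]$. The map $\widetilde{\updmap}$ of~\eqref{thm2:step3:update_map}, which is precisely the instance of~\eqref{eq:PPI_map_pert} for equilibrium $\eq$ with the swapped imitation rates~\eqref{PPIiswitch-tilda}, satisfies $\widetilde{\updmap}=h\circ\updmap\circ h^{-1}$ with $h(x)=1-x$, exactly as in Step~3 of the proof of Theorem~\ref{thm:perturbedPPIchaos}; since topological conjugacy is preserved under $h$, the map $\widetilde{\updmap}$ is Li--Yorke chaotic with orbits of all periods. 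As $h$ is an orientation-reversing diffeomorphism, differentiating $\widetilde{\updmap}(x)=1-\updmap(1-x)$ gives $\widetilde{\updmap}'_{\pm}(\eq)=\updmap'_{\mp}(p)$, so the two one-sided multipliers merely swap while their moduli are unchanged; hence $\eq=1-p$ inherits the repelling property, and taking $\step_{\eq}:=\step_p$ completes this case.

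The only place where care is genuinely required is the compatibility of the two thresholds, and this is exactly where the hypothesis $\eq\in(\tfrac13,\tfrac23)$ enters. The repelling threshold coming from condition~\ref{prop:dynamics_PPI_0} of Lemma~\ref{prop:dynamics_PPI} stays strictly below $1$ only when $\eq>\tfrac13$; for $\eq\le\tfrac13$ condition~\ref{prop:dynamics_PPI_1} shows that $|\updmap'_+(\eq)|\le1$ for \emph{every} admissible $\step$, so the fixed point can never be rendered repelling within the range $\step\le1$, and (by the conjugacy) the symmetric obstruction occurs for $\eq\ge\tfrac23$. Thus the main, and essentially only nontrivial, obstacle is the bookkeeping of constants ensuring $\max\{\step_\eq^{\mathrm{ch}},\step_\eq^{\mathrm{rep}}\}<1$; the chaos and repulsion data themselves are inherited verbatim from results already established.
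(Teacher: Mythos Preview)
Your proposal is correct and follows essentially the same approach as the paper: fix the maximal perturbations, combine the chaos threshold from Proposition~\ref{Thm2:step2} with the repelling threshold from Lemma~\ref{prop:dynamics_PPI}\ref{prop:dynamics_PPI_0}, take their maximum, and then transport to $\eq\in(\tfrac12,\tfrac23)$ via the conjugacy $h(x)=1-x$ (exactly Step~3 of Theorem~\ref{thm:perturbedPPIchaos}). Your treatment of the boundary value $\eq=\tfrac12$ is in fact slightly more explicit than the paper's own proof of the corollary, which handles $(\tfrac13,\tfrac12)$ and then invokes conjugacy without separately addressing the midpoint.
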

Observe that the condition \ref{prop:dynamics_PPI_1} doesn't imply neither convergence to the fixed point nor it precludes it. Thus, outside the interval $(\frac 13, \frac 23)$, determining whether the Nash equilibrium $p$ is repelling or not requires a more elaborate analysis. Moreover, it is needed since if the fixed point is attracting it may attract almost every trajectory even when the system is Li-Yorke chaotic (the scrambled set can be of measure zero). Thus, in the next section we will modify the revision protocols presented here to assure that the Nash equilibrium is repelling.

\section{Truncated dynamics}
\label{sec:thm3}
One can observe that for any game $\game\equiv\gamefull$, choosing sufficiently large $\eta$ and $\xi$ in \eqref{eq:PPI_map_pert} leads to the map $\updmap$ with the repelling point $\eq$. However, if $\eta$ or $\xi$ exceed  maximal values, the map $\updmap$ is no longer an interval (self)map. In order to increase the range of the parameter $\eta$ without violating the condition $\updmap([0,1])=[0,1]$, we modify the conditional imitation rate $\iswitch_{AB}$ to be constant in a neighborhood of $1$. A similar modification of $\iswitch_{BA}$ in the neighborhood of $0$ allows us to extend the range for the parameter $\xi$.

More precisely, for $\gamma\in[0,1]\backslash\{p\}$ we define the \textit{truncated} conditional imitation rates as 
\begin{equation}\label{eq:truncated_cond_imit_rate}
    		\iswitch_{AB}^{*}(\stratx):=\iswitch_{AB}(\stratx),\quad
        \iswitch_{BA}^{*}(\stratx):=
		\begin{cases}
			\iswitch_{BA}(\gamma), &\text{for}\ \stratx\in\left[0,\gamma\right]\\
			\iswitch_{BA}(\stratx), &\text{for}\ \stratx\in\left(\gamma,1\right]\\
		\end{cases},
\end{equation}
when $\gamma\in[0,\eq)$, and
\begin{equation}\label{eq:truncated_cond_imit_rate2}
            \iswitch_{BA}^{*}(\stratx):=\iswitch_{BA}(\stratx),\quad
    		\iswitch_{AB}^{*}(\stratx):=
		\begin{cases}
			\iswitch_{AB}(\stratx), &\text{for}\ \stratx\in\left[0,\gamma\right)\\
			\iswitch_{AB}(\gamma), &\text{for}\ \stratx\in\left[\gamma,1\right]\\
		\end{cases}
\end{equation}
when $\gamma\in(\eq,1]$. We exclude the point $p$ from the domain of $\gamma$, as otherwise, the limit case $\gamma=p$ would result in a piecewise constant conditional imitation rate, and therefore any increase of the positive payoffs difference would not change the behavior of the agents.

Observe that if $\gamma=0$ (or $\gamma=1$) then we get from \eqref{eq:truncated_cond_imit_rate} (or from \eqref{eq:truncated_cond_imit_rate2}) that $\iswitch_{AB}^{*}(\stratx)=\iswitch_{AB}(\stratx)$ and $\iswitch_{BA}^{*}(\stratx)=\iswitch_{BA}(\stratx)$ for all $x \in [0,1]$.

From the microeconomic point of view, the revision protocol described by \eqref{eq:truncated_cond_imit_rate2} expresses the following idea. When the payoff difference exceeds the limit value $\frac{\gainb-\gaind}{\eq}(\gamma-\eq)$ (which happens when the population share of $A$-strategists exceeds the level $\gamma$), a revising agent does not attach importance to whether it will be greater or not. From this point on any further increase in the payoffs difference no longer affects the decision to change the strategy. For \eqref{eq:truncated_cond_imit_rate} we observe the symmetric situation, when $B$-strategists {\cm fraction} exceeds $1-\gamma$.

With the modifications of conditional imitation rates 
we get from ~\eqref{eq:game_dynamics_imit} the family of maps:
\begin{equation}\label{eq:PPI_truncated_map}
	\updmap^{*}
    (\stratx)=
	\begin{cases}
		\stratx\left(1+\step\xi\frac{\gainb-\gaind}{\eq}(1-\stratx)(\eq-\gamma)\right), &\text{for}\ \stratx\in[0,\gamma]\\
		F(x), &\text{for}\ \stratx\in(\gamma,1]
	\end{cases},
\end{equation}
if $\gamma\in[0,\eq)$, and
\begin{equation}\label{eq:PPI_truncated_map2}
	\updmap^{*}
    (\stratx)=
	\begin{cases}
		F(x), &\text{for}\ \stratx\in[0,\gamma)\\
		\stratx\left(1-\step\eta\frac{\gainb-\gaind}{\eq}(1-\stratx)(\gamma-\eq)\right), &\text{for}\ \stratx\in[\gamma,1]\\
	\end{cases},
\end{equation}
if $\gamma\in(\eq,1]$, where $F$ is given in \eqref{eq:PPI_map_pert} (see Figure \ref{fig:Fstar} to compare).
Note that for $\gamma=0$ or $\gamma=1$ the resulting maps $F^{*}$ have the form \eqref{eq:PPI_map_pert}. 
Using truncated conditional imitation rates, we can get game dynamics with repelling Nash equilibrium for the full range of $p\in(0,1)$. Thus, $\updmap^*$ will be maps with anticipated properties. 

\begin{theorem} \label{thm:truncatedPPIchaos}
    For every population game $\game=\gamefull$ defined by \eqref{eq:game} there exist $\eta,\xi>0$ and $\gamma\in(0,1)\backslash\{p\}$ such that the imitative game dynamics 
    described by the map $\updmap^{*}$ is Li-Yorke chaotic, there exist periodic orbits of any period, and the interior Nash equilibrium $\eq$ is repelling for sufficiently large $\step$. 
\end{theorem}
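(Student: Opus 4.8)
The plan is to exploit the fact that truncation relaxes precisely the self-map constraint that, in Theorem~\ref{thm:perturbedPPIchaos} and Lemma~\ref{prop:dynamics_PPI}, prevented the Nash equilibrium from being repelling outside the window $\eq\in(\frac13,\frac23)$, while leaving the chaos-producing mechanism intact. The crucial observation is that truncating $\iswitch_{AB}$ at a level $\gamma>\eq$ (formula~\eqref{eq:truncated_cond_imit_rate2}) does not alter $\updmap^{*}$ near $\eq$: on $[\eq,\gamma)$ it still coincides with $F$ from~\eqref{eq:PPI_map_pert}. Hence the one-sided derivatives $(\updmap^{*})'_{-}(\eq)=1-\step\xi(\gainb-\gaind)(1-\eq)$ and $(\updmap^{*})'_{+}(\eq)=1-\step\eta(\gainb-\gaind)(1-\eq)$ are given by the same formulas as in the untruncated case, yet the upper bound on $\eta$ ensuring $\updmap^{*}([0,1])=[0,1]$ is weakened, so $\eta$ may be pushed past its former maximum $\frac{4\eq}{(1-\eq)^2(\gainb-\gaind)}$. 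By the conjugacy $x\mapsto 1-x$ (interchanging $A\leftrightarrow B$, $\eta\leftrightarrow\xi$, $\gamma\leftrightarrow 1-\gamma$, $\eq\leftrightarrow 1-\eq$), exactly as in Step~3 of the proof of Theorem~\ref{thm:perturbedPPIchaos}, it suffices to treat $\eq\in(0,\frac12]$; the symmetric truncation of $\iswitch_{BA}$ near $0$ then covers $\eq\in[\frac12,1)$.

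For $\eq\in(0,\frac12]$ I would keep $\xi=\frac{4}{\eq(\gainb-\gaind)}$ at its maximal value, so that (as in Step~2 of Theorem~\ref{thm:perturbedPPIchaos}) the left lap is unchanged, with left critical point $\stratcl=\eq/2$ and $\updmap^{*}(\stratcl)=\frac{\eq}{2}+\step(1-\frac{\eq}{2})$; this already gives $(\updmap^{*})'_{-}(\eq)<-1$ for $\step$ near $1$, since $\eq<\frac23$. For the right lap I would pick $\gamma\in(\eq,\frac{1+\eq}{2})$ close enough to $\eq$ that $(1-\gamma)(\gamma-\eq)<\tfrac12\,\eq(1-\eq)$, and set $\eta=\frac{\eq}{(\gainb-\gaind)(1-\gamma)(\gamma-\eq)}$, so the self-map constraint $\step\eta\frac{\gainb-\gaind}{\eq}(1-\gamma)(\gamma-\eq)\le1$ holds with equality at $\step=1$. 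Then $F(\gamma)=\gamma(1-\step)$, and since $\gamma<\frac{1+\eq}{2}$ the point $\gamma$ is the first zero of $F$ on $(\eq,1)$; truncating there makes the right branch of $\updmap^{*}$ increasing on $[\gamma,1]$ (the truncated quadratic has its vertex to the left of $\gamma$ for all $\step\le1$), so $\updmap^{*}$ is a well-defined bimodal self-map with local minimum $\stratcr=\gamma$. The chosen inequality is exactly what forces $(\updmap^{*})'_{+}(\eq)=1-\step\eta(\gainb-\gaind)(1-\eq)<-1$ for $\step$ near $1$, and it is satisfiable for every $\eq\in(0,\frac12]$ because $(1-\gamma)(\gamma-\eq)\to0$ as $\gamma\to\eq^{+}$. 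The two derivative bounds together make $\eq$ repelling.

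It then remains to produce chaos via Proposition~\ref{lem:chaos_cond} with $z_l=\stratcl=\eq/2$ and $z_r=\stratcr=\gamma$. Condition~\ref{lem:chaos_cond1}, $\updmap^{*}(\gamma)<\eq/2$, holds because $\updmap^{*}(\gamma)=\gamma(1-\step)\to0$ as $\step\to1$; condition~\ref{lem:chaos_cond2}, $\updmap^{*}(\eq/2)>\gamma$, and the remaining condition $(\updmap^{*})^{2}(\eq/2)>\gamma$ hold because the maximal choice of $\xi$ sends the left critical value $\updmap^{*}(\eq/2)=F(\eq/2)\to1$, and $1$ is a fixed point, so $(\updmap^{*})^{2}(\eq/2)\to\updmap^{*}(1)=1>\gamma$. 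By continuity all three strict inequalities persist for $\step$ in a left-neighborhood of $1$, whence Proposition~\ref{lem:chaos_cond} produces $x$ with $\updmap^{*}(x)<x<(\updmap^{*})^{3}(x)$, i.e.\ a period-$3$ orbit; the Sharkovsky and Li--Yorke theorems then give periodic orbits of every period and Li--Yorke chaos. The conjugacy argument finally transfers the conclusion to $\eq\in[\frac12,1)$.

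I expect the main obstacle to be the simultaneous control exercised in the second paragraph: enlarging $\eta$ to repel at $\eq$ drives the right lap of $F$ below $0$, so one must truncate \emph{before} $F$ attains its minimum and verify that the junction at $\gamma$ is a genuine $V$-shaped local minimum rather than a point that destroys the bimodal structure. Pinning $\eta$ to the self-map constraint at equality (so that $\updmap^{*}(\gamma)=\gamma(1-\step)$ vanishes at $\step=1$) is what reconciles ``$\eta$ large enough to repel'' with ``$\updmap^{*}$ stays in $[0,1]$'', and one must then confirm that placing $\gamma$ near $\eq$ does not spoil the period-$3$ inequalities --- which succeeds only because keeping $\xi$ maximal sends the left critical value onto the repelling fixed point~$1$.
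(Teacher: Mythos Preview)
Your proposal is correct and follows essentially the same route as the paper. Both keep $\xi=\frac{4}{\eq(\gainb-\gaind)}$ maximal, truncate $\iswitch_{AB}$ at some $\gamma\in(\eq,\frac{1+\eq}{2})$, and choose $\eta$ so that the self-map constraint is saturated at $\step=1$ (giving $\updmap^{*}(\gamma)=\gamma(1-\step)$); both then apply Proposition~\ref{lem:chaos_cond} with $z_l=\eq/2$, $z_r=\gamma$ and finish via conjugacy. The paper simply fixes the explicit value $\gamma=\eq+\eq^2/2$ (which satisfies your inequality $(1-\gamma)(\gamma-\eq)<\tfrac12\eq(1-\eq)$ for all $\eq\in(0,\tfrac12)$, and yields exactly your $\eta$), and replaces your limit argument for $(\updmap^{*})^2(\eq/2)>\gamma$ by an explicit Lipschitz estimate producing quantitative thresholds $\step^*_i$.
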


 Thus, by using truncated imitation switch rates, the instability of the fixed point is restored (see e.g. Figure \ref{fig:Fstar}), and we exclude the possibility that one can observe convergence to Nash equilibrium on the set of positive Lebesgue measure. With the family of maps $F^*$ as the update map, the unpredictability of the system is guaranteed.

 \begin{figure}[!h]
\centering
\begin{subfigure}{0.48\textwidth}
\centering
	\begin{tikzpicture}[scale=0.68]
		\draw[thick,xscale=7,->] (0,0) -- (1.05,0);
		\draw[thick,->] (0,0) -- (0,6.35);
		\draw[domain=0:1/4,smooth,variable=\x,orange,very thick,xscale=7] plot ({\x},{6*4*(1/4-\x)});
		\draw[domain=1/4:1,smooth,variable=\x,orange,very thick,xscale=7] plot 	({\x},{0});
		\draw[domain=0:1/4,smooth,variable=\x,violet,very thick,xscale=7] plot ({\x},{0});
		\draw[domain=1/4:1,smooth,variable=\x,violet,very thick,xscale=7] plot 	({\x},{16/9*4*(\x-1/4)});
		\foreach \x in {0.25,0.5,0.75,1}
		\draw[xscale=7,shift={(\x,0)}] (0pt,2.5pt) -- (0pt,-2.5pt) node[below] {\footnotesize $\x$};
		\foreach \y in {0,1,2,...,5,6}
		\draw[shift={(0,\y)}] (2.5pt,0pt) -- (-2.5pt,0pt) node[left] {\footnotesize $\y$};
	\end{tikzpicture}
    \caption{Graph of {\color{violet}$\iswitch_{AB}$} and {\color{orange}$\iswitch_{BA}$}.}
\end{subfigure}
\hfill
\begin{subfigure}{0.48\textwidth}
\centering
	\begin{tikzpicture}[scale=4.3]
		\draw[thick] (0,0) -- (1,0);
		\draw[thick] (0,0) -- (0,1);
		\draw (0,1) node[left]{1};
		\draw (1,0) node[below]{1};
		\draw (-0.02,0) node[below]{0}; 
		\draw[thick,domain=0:1,variable=\y] plot ({1},{\y});
		\draw[thick,domain=0:1,variable=\x] plot ({\x},{1});
		\draw[domain=0:1,smooth,variable=\x,black,dashed] plot ({\x},{\x});
		\draw[domain=0:1/4,smooth,variable=\x,blue,very thick] plot ({\x},{\x*(1+1*6*4*(1-\x)*(1/4-\x)});
		\draw[domain=1/4:1,smooth,variable=\x,blue,very thick] plot ({\x},{\x*(1-1*16/9*4*(1-\x)*(\x-1/4)});
		\filldraw [red] (0.14,0) circle (0.2pt) node[anchor=north] {$\stratcl$};
		\filldraw [black] (0.25,0) circle (0.2pt) node[anchor=north] {$p$};
		\filldraw [red] (5/8,0) circle (0.2pt) node[anchor=north] {$\stratcr$};
	\end{tikzpicture}
    \caption{Graph of the  map $\updmap$.}
\end{subfigure}
\hfill
\begin{subfigure}{0.48\textwidth}
\centering
	\begin{tikzpicture}[scale=0.68]
		\draw[thick,xscale=7,->] (0,0) -- (1.05,0);
		\draw[thick,->] (0,0) -- (0,6.35);
		\draw[domain=0:1/4,smooth,variable=\x,orange,very thick,xscale=7] plot ({\x},{6*4*(1/4-\x)});
		\draw[domain=1/4:1,smooth,variable=\x,orange,very thick,xscale=7] plot 	({\x},{0});
		\draw[domain=0:1/4,smooth,variable=\x,violet,very thick,xscale=7] plot ({\x},{0});
		\draw[domain=1/4:3/4,smooth,variable=\x,violet,very thick,xscale=7] plot 	({\x},{16/9*4*(\x-1/4)});
		\draw[domain=3/4:1,smooth,variable=\x,violet,very thick,xscale=7] plot 	({\x},{16/9*4*(3/4-1/4)});
		\foreach \x in {0.25,0.5,0.75,1}
		\draw[xscale=7,shift={(\x,0)}] (0pt,2.5pt) -- (0pt,-2.5pt) node[below] {\footnotesize $\x$};
		\foreach \y in {0,1,2,...,5,6}
		\draw[shift={(0,\y)}] (2.5pt,0pt) -- (-2.5pt,0pt) node[left] {\footnotesize $\y$};
	\end{tikzpicture}
    \caption{Graph of {\color{violet}$\iswitch^*_{AB}$} and {\color{orange}$\iswitch^*_{BA}$}.}
\end{subfigure}
\hfill
\begin{subfigure}{0.48\textwidth}
\centering
	\begin{tikzpicture}[scale=4.3]
		\draw[thick] (0,0) -- (1,0);
		\draw[thick] (0,0) -- (0,1);
		\draw (0,1) node[left]{1};
		\draw (1,0) node[below]{1};
		\draw (-0.02,0) node[below]{0}; 
		\draw[thick,domain=0:1,variable=\y] plot ({1},{\y});
		\draw[thick,domain=0:1,variable=\x] plot ({\x},{1});
		\draw[domain=0:1,smooth,variable=\x,black,dashed] plot ({\x},{\x});
		\draw[domain=0:1/4,smooth,variable=\x,blue,very thick] plot ({\x},{\x*(1+1*6*4*(1-\x)*(1/4-\x)});
		\draw[domain=1/4:3/4,smooth,variable=\x,blue,very thick] plot ({\x},{\x*(1-1*16/9*4*(1-\x)*(\x-1/4)});
		\draw[domain=3/4:1,smooth,variable=\x,blue,very thick] plot ({\x},{\x*(1-1*16/9*4*(1-\x)*(3/4-1/4)});
		\filldraw [red] (0.14,0) circle (0.2pt) node[anchor=north] {$\stratcl$};
		\filldraw [black] (0.25,0) circle (0.2pt) node[anchor=north] {$p$};
		\filldraw [red] (5/8,0) circle (0.2pt) node[anchor=north] {$\stratcr$};
	\end{tikzpicture}
    \caption{Graph of the  map $\updmap^*$.}
\end{subfigure}
\hfill
\begin{subfigure}{0.48\textwidth}
\centering
	\begin{tikzpicture}[scale=0.68]
		\draw[thick,xscale=7,->] (0,0) -- (1.05,0);
		\draw[thick,->] (0,0) -- (0,6.35);
		\draw[domain=0:1/4,smooth,variable=\x,orange,very thick,xscale=7] plot ({\x},{6*4*(1/4-\x)});
		\draw[domain=1/4:1,smooth,variable=\x,orange,very thick,xscale=7] plot 	({\x},{0});
		\draw[domain=0:1/4,smooth,variable=\x,violet,very thick,xscale=7] plot ({\x},{0});
            \draw[domain=1/4:9/32,smooth,variable=\x,violet,very thick,xscale=7] plot 	({\x},{256/23*4*(\x-1/4)});
            \draw[domain=9/32:1,smooth,variable=\x,violet,very thick,xscale=7] plot 	({\x},{256/23*4*(9/32-1/4)});
		\foreach \x in {0.25,0.5,0.75,1}
		\draw[xscale=7,shift={(\x,0)}] (0pt,2.5pt) -- (0pt,-2.5pt) node[below] {\footnotesize $\x$};
		\foreach \y in {0,1,2,...,5,6}
		\draw[shift={(0,\y)}] (2.5pt,0pt) -- (-2.5pt,0pt) node[left] {\footnotesize $\y$};
	\end{tikzpicture}
    \caption{Graph of {\color{violet}$\iswitch^*_{AB}$} and {\color{orange}$\iswitch^*_{BA}$}.}
\end{subfigure}
\hfill
\begin{subfigure}{0.48\textwidth}
\centering
	\begin{tikzpicture}[scale=4.3]
		\draw[thick] (0,0) -- (1,0);
		\draw[thick] (0,0) -- (0,1);
		\draw (0,1) node[left]{1};
		\draw (1,0) node[below]{1};
		\draw (-0.02,0) node[below]{0}; 
		\draw[thick,domain=0:1,variable=\y] plot ({1},{\y});
		\draw[thick,domain=0:1,variable=\x] plot ({\x},{1});
		\draw[domain=0:1,smooth,variable=\x,black,dashed] plot ({\x},{\x});
		\draw[domain=0:1/4,smooth,variable=\x,blue,very thick] plot ({\x},{\x*(1+1*6*4*(1-\x)*(1/4-\x)});
		\draw[domain=1/4:9/32,smooth,variable=\x,blue,very thick] plot ({\x},{\x*(1-1*256/23*4*(1-\x)*(\x-1/4)});
		\draw[domain=9/32:1,smooth,variable=\x,blue,very thick] plot ({\x},{\x*(1-1*256/23*4*(1-\x)*(9/32-1/4)});
		\filldraw [red] (0.14,0) circle (0.2pt) node[anchor=north] {$\stratcl$};
		\filldraw [black] (0.25,0) circle (0.2pt) node[anchor=north] {\!\!$p$};
		\filldraw [red] (9/32,0) circle (0.2pt) node[anchor=north] {\,\,\,\,$\stratcr$};
	\end{tikzpicture}
    \caption{Graph of the  map $\updmap^*$.}
\end{subfigure}
\caption{Truncation of imitation revision rates of a game with Nash equilibrium $\eq=\frac 14$. [{\bf (a)} and {\bf (b)}] The perturbed imitation rates and the update map with $\xi=6$, $\step=1$ and maximal possible value of $\eta$ for \eqref{eq:PPI_map_pert} ($\eta=\frac{16}{9}$). [{\bf (c)} and {\bf (d)}] Truncation of {\bf (a)} at level $\gamma=\frac{3}{4}$ and the resulting update map \eqref{eq:PPI_truncated_map2}. [{\bf (e)} and {\bf (f)}] Truncation of {\bf (a)} at level $\gamma=p+\frac{p^2}{2}=\frac{9}{32}$ and the update map with $\eta$ increased to its maximal value for \eqref{eq:PPI_truncated_map2} ($\eta=\frac{256}{23}$). For the map $\updmap^*$ from {\bf (f)} Nash equilibrium $\eq$ is repelling.
}\label{fig:Fstar}
\end{figure}

\subsection{Skeleton of the proof of Theorem~\ref{thm:truncatedPPIchaos}}
Let $\game\equiv\gamefull$ be a game defined by \eqref{eq:game} with the interior Nash equilibrium $\eq$. 
First, note that for $\eq=\frac12$ we put $\gamma:=0$ (or $\gamma:=1$). Then the maps $\updmap^{*}$ take form \eqref{eq:PPI_map_pert} and Theorem~\ref{thm:truncatedPPIchaos} follows from Corollary~\ref{cor:perturbed_PPI_repelling}.
Thus, it remains to consider the cases $\eq\in(0,\frac12)$ and $\widetilde{\eq}\in(\frac12,1)$. The rest of the proof follows similar steps as the proof of Theorem~\ref{thm:perturbedPPIchaos}. All details, especially proofs of auxiliary results, can be found in Appendix~\ref{app:proof-thm3}.

\vspace{0.2cm}

\noindent{\bf Step 1. Setting parameters for which \eqref{eq:PPI_truncated_map} and \eqref{eq:PPI_truncated_map2} form a family of interval maps.}
The modifications \eqref{eq:truncated_cond_imit_rate} and \eqref{eq:truncated_cond_imit_rate2} of conditional imitation rates \eqref{eq:PPI_cond_imit_rates_pert} allow us to extend the set of parameters $\Delta_{\eq}$ for which the game dynamics is well-defined. Namely, we will show that for an appropriate choice of $\gamma$, the sets
\[
 \Delta^{*}_p := \left\{ (\nu_1,\nu_2,\nu_3) \in (0,\infty)^3 : \nu_1 \leqslant \frac{4}{\eq(2-2\eq-\eq^2)(\gainb-\gaind)}, \nu_2 \leqslant \frac{4}{\eq(\gainb-\gaind)} \text{ and } \nu_3 \in (0,1] \right\}
\]
and
\[
\begin{aligned}
 \Gamma^{*}_p := \left\{ (\nu_1,\nu_2,\nu_3) \in (0,\infty)^3 :\!\!\!\!\!\!\!\!\phantom{\frac{1}{1^1}}\right. &\nu_1 \leqslant \frac{4\eq}{(1-\eq)^2(\gainb-\gaind)},\\
 &\!\left.\nu_2 \leqslant \frac{4\eq}{(1-\eq)^2(-1+4\eq-\eq^2)(\gainb-\gaind)} \text{ and } \nu_3 \in (0,1] \right\}
 \end{aligned}
\]
describe the parameters $(\eta,\xi,\step)$, for which the pair $([0,1],\updmap^{*})$ forms a dynamical system. We will use the set $\Delta^{*}_p$ in all three steps of the proof, while the set $\Gamma^{*}_p$ will be needed in Step 3.

In this step, we focus on the case $\eq \in (0,\frac 12)$. For $\widetilde{\eq}\in(\frac{1}{2},1)$, we will again use the topological conjugacy argument (see Step 3).


\begin{lemma}\label{cor:f_maps_truncated}
Let $\eq\in(0,\frac{1}{2})$ be the Nash equilibrium of the game $\game$.
If $\gamma=\eq+\frac{\eq^2}{2}$, then for any parameters $(\eta,\xi,\step)\in\Delta^{*}_p$ we have $\updmap^{*}([0,1])=[0,1]$. In particular, $\updmap^{*}$ is an interval map.
\end{lemma}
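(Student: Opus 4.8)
The plan is to reduce the surjectivity statement $\updmap^{*}([0,1])=[0,1]$ to a pair of one-sided pointwise bounds, and then verify each bound separately at an extreme choice of parameters, exactly as in the two-step scheme used for Lemma~\ref{cor:PPI_interval_map}. First I would record continuity: the only candidate breakpoints are $x=\eq$ and $x=\gamma$, and at both the adjacent pieces agree (at $\eq$ both branches of $\updmap$ equal $\eq$, while at $\gamma$ the branch $x(1-\step\eta\frac{\gainb-\gaind}{\eq}(1-x)(x-\eq))$ and the truncated branch $x(1-\step\eta\frac{\gainb-\gaind}{\eq}(1-x)(\gamma-\eq))$ coincide). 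Since $\updmap^{*}(0)=0$ and $\updmap^{*}(1)=1$, the continuous image $\updmap^{*}([0,1])$ is a closed interval $[m,M]$ with $m\le 0\le 1\le M$; hence $\updmap^{*}([0,1])=[0,1]$ is \emph{equivalent} to the self-map property $0\le\updmap^{*}(x)\le 1$ for every $x$. This turns the lemma into proving an upper and a lower bound.

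Next I would exploit monotonicity in the parameters to decouple the two bounds. On $[0,\eq)$ we have $\updmap^{*}=\updmap$ lying above the diagonal, with $\updmap^{*}(x)$ increasing in the product $\step\xi$; on $[\eq,1]$ the map lies below the diagonal and $\updmap^{*}(x)$ is decreasing in $\step\eta$. Consequently the upper bound $\updmap^{*}\le 1$ is worst at $\step=1$, $\xi=\frac{4}{\eq(\gainb-\gaind)}$ and depends only on $\xi$, whereas the lower bound $\updmap^{*}\ge 0$ is worst at $\step=1$, $\eta=\frac{4}{\eq(2-2\eq-\eq^{2})(\gainb-\gaind)}$ and depends only on $\eta$. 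It therefore suffices to check each inequality at its own maximal parameter value, and monotonicity then propagates it to all $(\eta,\xi,\step)\in\Delta^{*}_{p}$.

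The upper bound is immediate on $[\eq,1]$, where $\updmap^{*}(x)\le x\le 1$. On $[0,\eq)$, at $\step=1$ and $\xi=\frac{4}{\eq(\gainb-\gaind)}$ one gets $\updmap^{*}(x)\le x+\frac{4}{\eq^{2}}\,x(1-x)(\eq-x)$, and since $x(\eq-x)\le\eq^{2}/4$ this is at most $x+(1-x)=1$. This mirrors the estimate of Lemma~\ref{cor:PPI_interval_map} and uses nothing about $\gamma$.

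The lower bound is the crux, and it is exactly what pins down the truncation level $\gamma=\eq+\tfrac{\eq^{2}}{2}$. On $[0,\eq)$ we have $\updmap^{*}\ge x\ge 0$, so I focus on $[\eq,1]$ at $\step=1$ and $\eta=\frac{4}{\eq(2-2\eq-\eq^{2})(\gainb-\gaind)}$; write $c:=\eta\frac{\gainb-\gaind}{\eq}=\frac{4}{\eq^{2}(2-2\eq-\eq^{2})}$. The decisive identity is that with this $\gamma$ one has $c(\gamma-\eq)(1-\gamma)=1$ (here $\gamma-\eq=\eq^{2}/2$ and $2-2\eq-\eq^{2}=2(1-\eq-\tfrac{\eq^{2}}{2})$ cancel), so $\updmap^{*}(\gamma)=\gamma\bigl(1-c(\gamma-\eq)(1-\gamma)\bigr)=0$. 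I then argue the minimum over $[\eq,1]$ is attained at $\gamma$ and equals $0$: on $[\eq,\gamma]$ the factor $(1-x)(x-\eq)$ is increasing (because $\gamma<\tfrac{1+\eq}{2}$, equivalently $1-\eq-\eq^{2}>0$, which holds for $\eq<\tfrac12$), so $c(1-x)(x-\eq)\le c(1-\gamma)(\gamma-\eq)=1$ and hence $\updmap^{*}(x)\ge 0$; on $[\gamma,1]$, writing $K:=c(\gamma-\eq)$ so that $1-\gamma=1/K$, the inequality $1-K(1-x)\ge 0$ is equivalent to $x\ge\gamma$, which holds throughout. Thus $\updmap^{*}\ge 0$, with equality only at $x=\gamma$. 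I expect this lower bound to be the main obstacle: it is where the special value of $\gamma$ enters, the threshold for $\eta$ is tight (so the bound is an equality at $\gamma$), and one must confirm that \emph{both} subintervals $[\eq,\gamma]$ and $[\gamma,1]$ stay nonnegative, not merely the single truncation point.
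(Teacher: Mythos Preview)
Your proof is correct and follows essentially the same strategy as the paper: reduce to the extremal parameter choice, exploit the key identity $c(\gamma-\eq)(1-\gamma)=1$ that forces $\updmap^{*}_{max}(\gamma)=0$, and sandwich the general $\updmap^{*}$ against this extremal map. The only difference is packaging: the paper routes the extremal analysis through Lemma~\ref{prop:f_max_truncated} (establishing the full bimodal shape of $\updmap^{*}_{max}$ via derivative computations), while you bypass that by bounding $x(\eq-x)\le\eq^{2}/4$ and using monotonicity of $(1-x)(x-\eq)$ on $[\eq,\gamma]$ directly --- a slightly more economical route for this lemma alone, though the paper needs the bimodal structure anyway for Proposition~\ref{Thm3:step2}.
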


The proof of Lemma \ref{cor:f_maps_truncated} proceeds similarly to that of Lemma \ref{cor:PPI_interval_map}: first we prove the claim for the maximal possible values of $\eta$, $\xi $, $\step$ and then extend it to all parameters from the set $\Delta^{*}_p$.\footnote{See Lemma \ref{prop:f_max_truncated} in Appendix \ref{app:proof-thm3} for other useful properties of the family of maps \eqref{eq:PPI_truncated_map2}.}

\vspace{0.2cm}

\noindent{\bf Step 2. Proof of Theorem \ref{thm:truncatedPPIchaos} for $\eq\in(0,\frac{1}{2})$.}
Using Proposition \ref{lem:chaos_cond}, we show that for any $\eq\in(0,\frac12)$ and an appropriate choice of parameters $\gamma$, $\eta$ and $\xi$ in \eqref{eq:PPI_truncated_map2} there is a threshold value $\step_{\eq}$ above which the game dynamics is guaranteed to be chaotic. Furthermore, the increased range of the parameter $\eta$ allows us to choose $\step_{\eq}$ in such a way that the repelling of the Nash equilibrium $\eq$ is also ensured.
\begin{proposition}\label{Thm3:step2}
Let $\eq\in(0,\frac{1}{2})$ be the Nash equilibrium of the game $\game$ and let $\gamma=\eq+\frac{\eq^2}{2}$, $\eta = \frac{4}{\eq(2-2\eq-\eq^2)(\gainb-\gaind)}$ and $\xi = \frac{4}{\eq(\gainb-\gaind)}$. Then there exists $\step_{\eq} \in (0,1)$ such that the map $\updmap^{*}$ is Li-Yorke chaotic, has periodic orbit of any period and the point $\eq$ is repelling for each $\step\in(\step_{\eq},1]$.
\end{proposition}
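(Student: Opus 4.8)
The plan is to exploit the fact that, at the endpoint $\step=1$ with the prescribed (maximal) parameters, the map $\updmap^{*}$ takes remarkably clean values at two natural test points, and then to propagate the resulting strict inequalities down to an interval $(\step_{\eq},1]$ by continuity in $\step$. Since $\gamma=\eq+\tfrac{\eq^2}{2}\in(\eq,1)$, the map $\updmap^{*}$ is given by \eqref{eq:PPI_truncated_map2}: it coincides with $\updmap$ from \eqref{eq:PPI_map_pert} on $[0,\gamma)$ and equals the truncated branch $x\bigl(1-\step\eta\tfrac{\gainb-\gaind}{\eq}(1-x)(\gamma-\eq)\bigr)$ on $[\gamma,1]$. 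Because $(\eta,\xi,\step)\in\Delta^{*}_{\eq}$ for every $\step\in(0,1]$, Lemma~\ref{cor:f_maps_truncated} guarantees that $\updmap^{*}$ is a genuine self-map of $[0,1]$; a short sign check on each branch shows it is bimodal of imitative type (increasing, then decreasing, then increasing) with $0,\eq,1$ as its only fixed points, and with its right turning point located at $\gamma$.

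For the chaos part I would apply Proposition~\ref{lem:chaos_cond} in its imitative form (conditions \ref{lem:chaos_cond1}, \ref{lem:chaos_cond2} together with $f^{2}(z_l)>z_r$) at the \emph{fixed} test points $z_l:=\eq/2$ and $z_r:=\gamma$, noting $0<z_l<z_r<1$. The decisive observation is the behaviour at $\step=1$: using $\xi\tfrac{\gainb-\gaind}{\eq}=\tfrac{4}{\eq^{2}}$ one computes $\updmap^{*}(\eq/2)=\tfrac{\eq}{2}\bigl(1+\tfrac{2}{\eq}(1-\tfrac{\eq}{2})\bigr)=1$, while the identity $2-2\eq-\eq^{2}=2(1-\gamma)$ collapses the truncation coefficient to $1$ and gives $\updmap^{*}(\gamma)=\gamma(1-\step)=0$. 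Thus at $\step=1$ we have $\updmap^{*}(z_r)=0<z_l$, $\updmap^{*}(z_l)=1>z_r$ and $(\updmap^{*})^{2}(z_l)=\updmap^{*}(1)=1>z_r$, all strict. For general $\step$ these quantities are elementary: $\updmap^{*}(\gamma)=\gamma(1-\step)$ and $\updmap^{*}(\eq/2)=\tfrac{\eq}{2}(1-\step)+\step$, so I would solve the inequalities explicitly, obtaining $\updmap^{*}(\gamma)<\eq/2$ for $\step>\tfrac{1+\eq}{2+\eq}$ and $\updmap^{*}(\eq/2)>\gamma$ for $\step>\tfrac{\eq(1+\eq)}{2-\eq}$, each strictly below $1$. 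Proposition~\ref{lem:chaos_cond} then yields a point with $\updmap^{*}(x)<x<(\updmap^{*})^{3}(x)$, hence a period-$3$ orbit by \cite{li1982odd}; Sharkovsky's theorem \cite{sha} gives orbits of all periods and \cite{liyorke} gives Li–Yorke chaos.

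For the repelling of $\eq$ I would read off the one-sided derivatives from the two branches adjacent to $\eq$. The left branch gives $(\updmap^{*})'_-(\eq)=1-\step\xi(\gainb-\gaind)(1-\eq)=1-\tfrac{4\step(1-\eq)}{\eq}$ and the right branch gives $(\updmap^{*})'_+(\eq)=1-\step\eta(\gainb-\gaind)(1-\eq)=1-\tfrac{4\step(1-\eq)}{\eq(2-2\eq-\eq^{2})}$. Each is $<-1$ precisely once $\step$ exceeds $\tfrac{\eq}{2(1-\eq)}$, respectively $\tfrac{\eq(2-2\eq-\eq^{2})}{2(1-\eq)}$; the elementary estimate $\eq^{3}+2\eq^{2}-4\eq+2>0$ on $(0,\tfrac12)$ shows both thresholds lie in $(0,1)$. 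Taking $\step_{\eq}$ to be the maximum of the finitely many thresholds produced above then yields the claimed $\step_{\eq}\in(0,1)$ for which chaos, orbits of all periods, and repelling of $\eq$ hold simultaneously on $(\step_{\eq},1]$.

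The step I expect to be the genuine obstacle is the second-iterate condition $(\updmap^{*})^{2}(\eq/2)>\gamma$. At $\step=1$ it is degenerate, since $\updmap^{*}(\eq/2)=1$ is exactly the boundary fixed point; for $\step<1$ one must first verify that $\updmap^{*}(\eq/2)=\tfrac{\eq}{2}(1-\step)+\step$ lands in the increasing third lap $(\gamma,1)$ — which is where the earlier threshold $\tfrac{\eq(1+\eq)}{2-\eq}$ comes from — and then that the image of this value under the increasing branch stays above $\gamma$. This reduces to a polynomial inequality in $\step$ (cubic, the branch being quadratic with argument linear in $\step$), and extracting a clean closed-form threshold strictly below $1$ is the only real bookkeeping in the argument; its \emph{existence}, however, follows at once from continuity together with the strict inequality $(\updmap^{*})^{2}(\eq/2)=1>\gamma$ at $\step=1$.
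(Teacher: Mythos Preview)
Your proof is correct and follows essentially the same architecture as the paper's: the same test points $z_l=\eq/2$ and $z_r=\gamma$, the same explicit thresholds for $\updmap^{*}(\gamma)<\eq/2$ and $\updmap^{*}(\eq/2)>\gamma$ (these are precisely the paper's $\step^{*}_1=\tfrac{\eq+1}{\eq+2}$ and $\step^{*}_2=\tfrac{\eq(\eq+1)}{2-\eq}$), and the same one-sided derivative computations yielding the repelling thresholds $\step^{*}_4=\tfrac{\eq}{2(1-\eq)}$ and $\step^{*}_5=\tfrac{\eq(2-2\eq-\eq^{2})}{2(1-\eq)}$.

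The one substantive difference is the second-iterate condition $(\updmap^{*})^{2}(\eq/2)>\gamma$. The paper handles it by computing the Lipschitz constant $L=\sup_{x\in(\gamma,1)}(\updmap^{*})'(x)=1+\tfrac{2\step}{2-2\eq-\eq^{2}}$ on the third lap and bounding $|1-(\updmap^{*})^{2}(\eq/2)|\leq L\,|1-\updmap^{*}(\eq/2)|$, which produces the explicit closed-form threshold $\step^{*}_3$. You instead observe that at $\step=1$ the inequality is strict ($(\updmap^{*})^{2}(\eq/2)=1>\gamma$) and invoke joint continuity of $\updmap^{*}$ in $(\step,x)$ to pull this back to an interval $(\step_{\eq},1]$. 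Your route is more elementary and entirely sufficient for the existence statement; the paper's Lipschitz estimate buys an explicit formula for the threshold, which is not required by the proposition as stated.
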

The above result completes the proof of Theorem~\ref{thm:truncatedPPIchaos} for $\eq\in(0,\frac{1}{2})$.


\vspace{0.2cm}

\noindent{\bf Step 3. Proof of Theorem \ref{thm:truncatedPPIchaos} for $\widetilde{\eq}\in(\frac{1}{2},1)$.}
Consider a game $\widetilde{\mathcal{G}}\equiv\widetilde{\mathcal{G}}(\mathcal{A},\widetilde{\payv})$ defined by \eqref{eq:game} with the interior Nash equilibrium $\widetilde{\eq}\in(\frac{1}{2},1)$. Denote $\eq:=1-\widetilde{\eq}\in(0,\frac{1}{2})$ and $\gamma:=\eq+\frac{\eq^2}{2}$. Then there exists a game $\game\equiv\gamefull$ with the interior Nash equilibrium $\eq$ such that for $\widetilde{\gamma}:=1-\gamma$ and any triplet $(\eta,\xi,\step)$, the condition $(\eta,\xi,\step)\in\Delta^{*}_p$ is equivalent to $(\xi,\eta,\step)\in\Gamma^{*}_{\widetilde{p}}$. Moreover, the conditional imitation rates defined as
\begin{equation}\label{eq:thm3_conjugacy}
\widetilde{\iswitch}^{\,*}_{AB}(\stratx):=\iswitch_{BA}^{*}(1-\stratx) \quad\text{and}\quad \widetilde{\iswitch}^{\,*}_{BA}(\stratx):=\iswitch^{*}_{AB}(1-\stratx),
\end{equation}
where $\iswitch^{*}_{AB}$ and $\iswitch^{*}_{BA}$ are given by \eqref{eq:truncated_cond_imit_rate2}, lead to the maps of following form (cf. \eqref{eq:PPI_truncated_map}):

\begin{equation}\label{thm3:step3:update_map}
	\widetilde{\updmap}^{*}
    (\stratx)=
	\begin{cases}
		\stratx\left(1+\step\eta\frac{\gainb-\gaind}{\widetilde{\eq}}(1-\stratx)(\widetilde{\eq}-\widetilde{\gamma})\right), &\text{for}\ \stratx\in[0,\widetilde{\gamma}]\\
		\widetilde{\updmap}(x), &\text{for}\ \stratx\in(\gamma,1]
	\end{cases},
\end{equation}
where $\widetilde{\updmap}$ is given in \eqref{thm2:step3:update_map}.
\begin{proposition}\label{Thm3:step3}
Let $\widetilde{\eq} \in (\frac 12, 1)$ be the Nash equilibrium of the game $\widetilde{\game}$ and let $\widetilde{\gamma}=\widetilde{\eq}-\frac{(1-\widetilde{\eq})^2}{2}$, $\eta=\frac{4\widetilde{\eq}}{(1-\widetilde{\eq})^2(-1+4\widetilde{\eq}-\widetilde{\eq}^2)(\gainb-\gaind)}$ and $\xi = \frac{4\widetilde{\eq}}{(1-\widetilde{\eq})^2(\gainb-\gaind)}$. Then there exists $\step_{\widetilde{\eq}} \in (0,1)$ such that the map $\widetilde{\updmap}^{*}$ defined by \eqref{thm3:step3:update_map} is Li-Yorke chaotic, has periodic orbit of any period and the point $\widetilde{\eq}$ is repelling for each $\step\in(\step_{\widetilde{\eq}},1]$.
\end{proposition}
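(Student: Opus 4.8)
The plan is to reduce the statement to Proposition~\ref{Thm3:step2} via the topological conjugacy $h(\stratx)=1-\stratx$, exactly as in the Step~3 arguments for Theorems~\ref{thm:anticoord-chaos} and~\ref{thm:perturbedPPIchaos}. Given $\widetilde{\game}$ with $\widetilde{\eq}\in(\frac12,1)$, I first pass to the game $\game\equiv\gamefull$ obtained by interchanging the two strategy labels. Relabelling sends the payoff block $(\gaina,\gainb,\gainc,\gaind)$ to $(\gaind,\gainc,\gainb,\gaina)$, so $\game$ has interior Nash equilibrium $\eq=1-\widetilde{\eq}\in(0,\frac12)$ and gain difference $(\gainb-\gaind)_{\game}=(\gainc-\gaina)_{\widetilde{\game}}=(\gainb-\gaind)_{\widetilde{\game}}\,\frac{1-\widetilde{\eq}}{\widetilde{\eq}}$, where the last equality uses \eqref{eq:eq}. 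Putting $\gamma:=\eq+\frac{\eq^2}{2}$ gives $\widetilde{\gamma}=1-\gamma=\widetilde{\eq}-\frac{(1-\widetilde{\eq})^2}{2}$, matching the truncation level in the statement.

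Next I would check the parameter correspondence announced before the proposition, namely $(\eta,\xi,\step)\in\Delta^{*}_{\eq}\Leftrightarrow(\xi,\eta,\step)\in\Gamma^{*}_{\widetilde{\eq}}$. After substituting the expression for $(\gainb-\gaind)_{\game}$, the $\nu_2$-bound of $\Delta^{*}_{\eq}$ turns into the $\nu_1$-bound of $\Gamma^{*}_{\widetilde{\eq}}$, while the $\nu_1$-bound of $\Delta^{*}_{\eq}$ turns into the $\nu_2$-bound of $\Gamma^{*}_{\widetilde{\eq}}$ thanks to the identity $2-2\eq-\eq^2=-1+4\widetilde{\eq}-\widetilde{\eq}^2$, valid under $\eq=1-\widetilde{\eq}$ — precisely the extra factor distinguishing the two $\nu_2$-bounds. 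Consequently the constants $\eta,\xi$ prescribed here for $\widetilde{\game}$ are the swap of the maximal constants used for $\game$ in Proposition~\ref{Thm3:step2}, the data of $\game$ lies in $\Delta^{*}_{\eq}$, and Lemma~\ref{cor:f_maps_truncated} yields $\updmap^{*}([0,1])=[0,1]$.

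With these choices I would then verify the conjugacy identity $\widetilde{\updmap}^{*}=h\circ\updmap^{*}\circ h$, i.e.\ $\widetilde{\updmap}^{*}(\stratx)=1-\updmap^{*}(1-\stratx)$ for all $\stratx\in[0,1]$. This is a branchwise computation: the increasing branch of $\widetilde{\updmap}^{*}$ on $[0,\widetilde{\gamma}]$ in \eqref{thm3:step3:update_map} corresponds under $h$ to the truncated branch of $\updmap^{*}$ on $[\gamma,1]$ in \eqref{eq:PPI_truncated_map2}, and matching the multiplicative coefficients reduces exactly to the gain-difference relation above (the factors $\gamma-\eq=\frac{\eq^2}{2}$ and $\widetilde{\eq}-\widetilde{\gamma}=\frac{(1-\widetilde{\eq})^2}{2}$ coincide since $\eq=1-\widetilde{\eq}$); the remaining laps match in the same way. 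As $h$ is an involutive homeomorphism of $[0,1]$ and $\updmap^{*}$ is a self-map, it follows that $\widetilde{\updmap}^{*}$ is an interval self-map and that $([0,1],\widetilde{\updmap}^{*})$ and $([0,1],\updmap^{*})$ are topologically conjugate.

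Finally I would transfer the conclusions. Li-Yorke chaos and the presence of periodic orbits of every period are invariants of topological conjugacy, so both pass from $\updmap^{*}$ (Proposition~\ref{Thm3:step2}) to $\widetilde{\updmap}^{*}$. Moreover $h$ maps the fixed point $\eq$ to $1-\eq=\widetilde{\eq}$ and, being affine with $|h'|\equiv1$, preserves the magnitudes of the one-sided derivatives; hence the repelling of $\eq$ gives the repelling of $\widetilde{\eq}$. Taking $\step_{\widetilde{\eq}}:=\step_{\eq}$ from Proposition~\ref{Thm3:step2} then yields the claim for every $\step\in(\step_{\widetilde{\eq}},1]$. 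The only delicate point is bookkeeping the label swap consistently — that truncating $\iswitch_{AB}$ above $\gamma$ for $\game$ corresponds to truncating $\widetilde{\iswitch}_{BA}$ below $\widetilde{\gamma}$ for $\widetilde{\game}$, and that $\eta$ and $\xi$ exchange the regions they govern — but no genuinely new dynamical input is required beyond the $\eq\in(0,\frac12)$ case.
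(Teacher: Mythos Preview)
Your proposal is correct and follows essentially the same route as the paper: both arguments pass to the mirror game $\game$ with Nash equilibrium $\eq=1-\widetilde{\eq}$, verify the parameter correspondence $(\eta,\xi,\step)\in\Delta^{*}_{\eq}\Leftrightarrow(\xi,\eta,\step)\in\Gamma^{*}_{\widetilde{\eq}}$, establish that $\widetilde{\updmap}^{*}$ and $\updmap^{*}$ are topologically conjugate via $\stratx\mapsto 1-\stratx$, and then import Li--Yorke chaos, periodic orbits of every period, and the repelling of the equilibrium from Proposition~\ref{Thm3:step2}. The paper packages the conjugacy step into a citation of Lemma~\ref{lem:symmetry} (via the relation~\eqref{eq:thm3_conjugacy} on the truncated imitation rates), whereas you spell out the branchwise verification and the gain-difference identity $(\gainb-\gaind)_{\game}=(\gainb-\gaind)_{\widetilde{\game}}\frac{1-\widetilde{\eq}}{\widetilde{\eq}}$ explicitly; but the underlying mechanism is identical. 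Two cosmetic remarks: the symbol $h$ is already used in the paper for $\iswitch_{BA}-\iswitch_{AB}$, so it would be cleaner to call the conjugacy $\varphi$; and the branch of $\widetilde{\updmap}^{*}$ on $[0,\widetilde{\gamma}]$ is logistic-shaped rather than monotone, though this has no bearing on the algebraic conjugacy check.
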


The proof of Proposition~\ref{Thm3:step3} is based on the fact that there is a topological conjugacy between the dynamical systems formed from maps \eqref{eq:PPI_truncated_map} and those given by \eqref{eq:PPI_truncated_map2}. This relation is established by~\eqref{eq:thm3_conjugacy}. Thus, from the previously obtained results for $\eq<\frac12$, Proposition~\ref{Thm3:step3} follows and the proof of Theorem~\ref{thm:truncatedPPIchaos} is completed.

\section{Conclusions and future work}
\label{sec:concl}
In this work we show that the microeconomic idea of a revision protocol exhibits complex
behavior in discrete time even in the simplest of games --- a $2\times 2$ symmetric random matching
anti-coordination game. This is true both for imitative and innovative protocols.  In fact, chaotic behavior, when the length of the unit time interval between periods in a discrete time horizon is large, is encoded into the imitative dynamics. Moreover, we show that revision protocols for which complex behavior can be seen is not far from the famous pairwise proportional imitation model. Thus, one should expect complex behavior of discrete time revision driven game dynamics even in very simple evolutionary game-theoretic settings.
These findings show the possibility that the {\it invisible hand of the market}  can be in deep delirium and produce senseless, non-optimal outcomes. 
There is an established work, see e.g. \cite{burke2014entry}, which describes equilibrium overshoot in markets due to the fact that institutional actors such as corporations can change their policies according to the yearly reports, thus they are correlated and can act simultaneously at the end of the year. Our results point to another microeconomic source of possible instability of the population dynamics: instability may be a direct consequence of comparisons. 
These findings are in line with recent results from multiagent reinforcement learning problems, where complex behavior can be observed also in simple games \cite{2017arXiv170109043P,BCFKMP21,bielawski2024memory}. 
 
Finally, we are convinced that our results are robust. We already know that if one considers
more strategies, 
the chaotic behavior of agents following the multiplicative weights update algorithm
(a learning model merged in continuous time with PPI by the replicator equation) can be detected \cite{CFMP2019}. An open
question to follow is whether similar results can be obtained for coordination games when agents are prone to some errors  and fixed points of the dynamics are no longer Nash equilibria but quantal response equilibria? 
Moreover, one may ask if it is possible to get chaotic behavior for other well-known innovative dynamics like pairwise comparison (Smith) dynamics introduced by \eqref{eq:Smithprot} or Brown-von Neumann-Nash dynamics \cite{San10,brown1950solutions} or even best response dynamics. This would probably require a more complex structure of the game.

\vskip 0.3in
{\bf Acknowledgements.} Jakub Bielawski, Łukasz Cholewa and Fryderyk Falniowski greatfully acknowledge the support from National Science Centre, Poland, grant nr. 2023/51/B/HS4/01343.
\vskip 0.2in
{\bf Funding.} National Science Centre, Poland, grant nr. 2023/51/B/HS4/01343.
\vskip 0.2in
{\bf Data Availability.} No data is associated with the manuscript.
\vskip 0.2in
{\bf Conflict of interest.} The authors have no conflict of interest to declare.

\bibliographystyle{abbrvnat} 
\bibliography{ms,Bibliography,IEEEabrv}

\section*{Appendix}
\label{sec:appendix}
\appendix


\section{Proof of Theorem \ref{thm:anticoord-chaos}}\label{app:proof-thm1}

\begin{proof}[Proof of Lemma \ref{cor:lin_bimodal1}]
First, observe that if a continuous bimodal map $\bimap \colon[0,1]\to[0,1]$ with critical points $\stratcl<\stratcr$, satisfies conditions:
\begin{multicols}{3}
	\begin{enumerate}[$(1'')$]
		\item\label{prop:cond1} $\bimap(\stratcr)>\stratcl$,
		\item\label{prop:cond2} $\bimap(\stratcl)=0$,
		\item\label{prop:cond3} $\bimap(0)>\stratcr$,
	\end{enumerate}
    \end{multicols}
\noindent then, by Proposition \ref{lem:chaos_cond}, $\bimap(x)<x<\bimap^3(x)$ for some point $x\in(\stratcl,\stratcr)$.
We will show that the map $f$ in \eqref{eq:f_form} 
satisfies the conditions \ref{prop:cond1}-\ref{prop:cond3} above.

Observe that a map $\bimap$ of the form \eqref{PLBmap} is a continuous self-map of the interval $[0,1]$ if and only if it satisfies the following conditions:
\begin{equation}\label{eq:f_conditions}
	\begin{aligned}
		\bimap(0)&=\alpha_1\in[0,1],\;\qquad \bimap(\stratcl)=\beta_1\stratcl+\alpha_1=\beta_2\stratcl+\alpha_2\in[0,1],\\
		\bimap(\stratcr)&=\beta_2\stratcr+\alpha_2=\beta_3\stratcr+\alpha_3\in[0,1],\;\qquad \bimap(1)=\beta_3+\alpha_3\in[0,1].
	\end{aligned}
\end{equation}
Hence
\[
\alpha_2=-\beta_2\stratcl+\beta_1\stratcl+\alpha_1 \;\; \text{and} \;\; \alpha_3=-\beta_3\stratcr+\beta_2\stratcr-\beta_2\stratcl+\beta_1\stratcl+\alpha_1,
\]
which allows us to eliminate parameters $\alpha_2$ and $\alpha_3$. So in fact the map $\bimap$ is of the form
\begin{equation*}
	\bimap(x)=\begin{cases}
		\beta_1x+\alpha_1, &\text{for}\ x\in[0,\stratcl),\\
		\beta_2(x-\stratcl)+\beta_1\stratcl+\alpha_1, &\text{for}\ x\in [\stratcl,\stratcr),\\
		\beta_3(x-\stratcr)+\beta_2(\stratcr-\stratcl)+\beta_1\stratcl+\alpha_1, &\text{for}\ x\in [\stratcr,1].
	\end{cases}
\end{equation*}
Now observe that
$\bimap(\stratcl)=0$  if and only if $\alpha_1=-\beta_1\stratcl$,
which simplify the form of map $\bimap$ to the form \eqref{eq:f_form}.
Furthermore, the following equivalences hold
$$
\bimap(0)=-\beta_1\stratcl\in(\stratcr,1]\iff\beta_1\in B_1:=\left[ -\frac{1}{\stratcl},-\frac{\stratcr}{\stratcl}\right)
$$
and
$$
\bimap(\stratcr)=\beta_2(\stratcr-\stratcl)\in(\stratcl,1]\iff\beta_2\in B_2:=\left(\frac{\stratcl}{\stratcr-\stratcl},\frac{1}{\stratcr-\stratcl}\right].
$$
Let $\beta_2\in B_2$. Then we obtain
$$
\beta_3(1-\stratcr)+\stratcl<\bimap(1)=\beta_3(1-\stratcr)+\beta_2(\stratcr-\stratcl)\leq\beta_3(1-\stratcr)+1<1.
$$
So for any 
$$
\beta_3\in B_3:=\left[ -\frac{\stratcl}{1-\stratcr},0\right) 
$$
we have $\bimap(1)\in[0,1]$.

From the above considerations it follows that for any parameters $(\beta_1,\beta_2,\beta_3)\in B_1\times B_2\times B_3$ a map of the form~\eqref{eq:f_form} satisfies the conditions~\eqref{eq:f_conditions} as well as \ref{prop:cond1}-\ref{prop:cond3} from the beginning of the proof.

\end{proof}

\begin{proof}[Proof of Proposition \ref{prop:inn_switch_rates}]
Note that for any bimodal map $\bimap$ of the form~\eqref{eq:f_form} we have $\bimap(0)=-\beta_1\stratcl>0$ and $\bimap(\stratcl)=0$, so $\bimap$ has a fixed point 
$$
x^*:=\frac{\beta_1\stratcl}{\beta_1-1}\in(0,\stratcl).
$$
Moreover, $x^*$ is a unique fixed point of the map $\bimap$ if and only if 
$\bimap(\stratcr)=\beta_2(\stratcr-\stratcl)<\stratcr$,
i.e. $\beta_2\in(0,\frac{\stratcr}{\stratcr-\stratcl})$. 
With $\stratcl=\frac{\eq}{1-\eq}$ and $\stratcr=2\eq$, and since $p\in(0,\frac{1}{2})$, we obtain that $\eq$ belongs to the first lap, that is $0<p<\stratcl<\stratcr<1$.

Next, observe that
$$
x^*=p\iff (\beta_1-1)p=\beta_1\stratcl\iff\beta_1=\frac{p-1}{p}
$$
and
\[
\frac{p-1}{p}\in\left[-\frac{1}{\stratcl},-\frac{\stratcr}{\stratcl}\right)=\left[\frac{p-1}{p},\,2(p-1)\right).
\]
Therefore, the map 
\begin{equation}\label{eq:map_f_form}
	\updmap(x)=
	\begin{cases}
		\frac{p-1}{p}\, x+1, &\text{for}\ x\in[0,\frac{p}{1-p}),\\
		\beta_2(x-\frac{p}{1-p}), &\text{for}\ x\in [\frac{p}{1-p},2p),\\
		\beta_3(x-2p)+\beta_2\frac{p(1-2p)}{1-p}, &\text{for}\ x\in [2p,1],
	\end{cases}
\end{equation}
defined by the parameters $\beta_2 \in \left(\frac{1}{1-2p},\frac{2(1-p)}{1-2p}\right)$ and $\beta_3 \in \left[ -\frac{p}{(1-2p)(1-p)},0\right)$,
has the form \eqref{eq:f_form} from Lemma \ref{cor:lin_bimodal1}.
Moreover, as $\eq<1/2$, for every point $x\in(0,\frac{p}{1-p})$ we have $\updmap'(x)=\frac{p-1}{p}<-1$.
Therefore, the fixed point $p$ is repelling.
Since $u_A(x)>u_B(x)$ if and only if $x<p$, we obtain that
\begin{equation*}
\begin{aligned}
 \switch_{AB}(x) = 1-\frac{\updmap(x)}{x}
&=
\begin{cases}
	\frac{1}{p}-\frac{1}{x}  , &\text{for}\ x\in[p,\frac{p}{1-p}),\\
	1-\beta_2\left(1-\frac{p}{(1-p)x} \right)   , &\text{for}\ x\in[\frac{p}{1-p},2p),\\
	1-\beta_3\left(1-\frac{2p}{x} \right)-\beta_2\frac{b(1-2b)}{(1-b)x}  , &\text{for}\ x\in[2p,1],\\
\end{cases}\\
\end{aligned}
\end{equation*}
 for $x\in[p,1]$, and $\switch_{AB}(x) = 0$ for $x\in[0,p)$, while the conditional switch rate $\switch_{BA}$ is given by
\begin{equation*}
 \switch_{BA}(x) = \frac{\updmap(x)-x}{1-x} 
 = \frac{p-x}{p(1-x)}
\end{equation*}
 for $x\in[0,p)$, and $\switch_{BA}(x) = 0$ for $x\in[p,1]$.

Since the map $\switch_{AB}$ is continuous on $[0,1]$ and has bounded derivative on each of intervals $(0,p)$, $(p,\stratcl)$, $(\stratcl,\stratcr)$ and $(\stratcr,1)$, it is Lipschitz continuous on $[0,1]$. By similar argument $\switch_{BA}$ is Lipschitz continuous on $[0,1]$ as well.\footnote{Figure~\ref{fig:switch_rates} shows the graphs of $\switch_{AB}$ in \eqref{rho12inn} and $\switch_{BA}$ in \eqref{rho21inn} for example parameters $\eq=0{.}2$, $\beta_2 = 2 \in \left(\frac{1}{1-2\eq},\frac{2(1-\eq)}{1-2\eq}\right) = \left(\frac{5}{3},\frac{8}{3}\right)$ and $\beta_3 = -\frac{1}{3} \in \left[ -\frac{\eq}{(1-2\eq)(1-\eq)},0\right) = \left[ -\frac{5}{12},0\right)$.} 

Finally, recall that $u_A(x)>u_B(x)$ if and only if $x<p$. So for every point $x\in[0,1]$ the switch rates $\switch_{AB}$ in \eqref{rho12inn} and $\switch_{BA}$ in \eqref{rho21inn} satisfy conditions
\begin{equation}\label{eq:switch_rates_sgn}
\begin{aligned}
\sign\left( \switch_{BA}(x)\right)&=\sign\left(\left[u_A(x)-u_B(x) \right]_+  \right),\\
\sign\left( \switch_{AB}(x)\right)&=\sign\left(\left[u_B(x)-u_A(x) \right]_+  \right), 
\end{aligned}
\end{equation}
where $[d]_+=\max\{0,d\}$. This implies that the maps $\switch_{AB}$ and $\switch_{BA}$ indeed form an innovative revision protocol of innovative dynamics. The proof of Proposition~\ref{prop:inn_switch_rates} is completed. 
\end{proof}

\begin{proof}[Proof of Proposition \ref{prop:chaos_inn_p>}]
 
 We first show that if \eqref{def_tilda_switch} holds then the dynamical systems $([0,1],\updmap)$  and $([0,1],\widetilde{\updmap})$, defined by \eqref{eq:game_dynamics-PC} for $\game$ and $\widetilde{\game}$ respectively, are topologically conjugate by the homeomorphism $\varphi(\stratx)=1-\stratx$.
 So assume that \eqref{def_tilda_switch} holds. Then
	\begin{equation}\label{fiffi}
		\begin{aligned}
			\left(\varphi^{-1}\circ \updmap\circ \varphi\right)(\stratx)&=1-\updmap(1-\stratx)\\
			&=
			\begin{cases}
				1-\left(1-\stratx+\step \stratx\switch_{BA}(1-\stratx)\right) , &\text{ for } 1-\stratx\in[0,\eq),\\
				1-\left(1-\stratx\right)\left(1-\step\switch_{AB}(1-\stratx)\right) , &\text{ for } 1-\stratx\in[\eq,1],\\
			\end{cases}\\
			&=
			\begin{cases}
				\stratx\left(1-\step\widetilde{\switch}_{AB}(\stratx)\right) , &\text{ for } \stratx\in(\widetilde{\eq},1],\\
				\stratx+\step\left(1-\stratx\right)\widetilde{\switch}_{BA}(\stratx) , & \text{ for }\stratx\in[0,\widetilde{\eq}],\\
			\end{cases}\\
			&=\widetilde{\updmap}(\stratx).
		\end{aligned}
	\end{equation}
Therefore, the map $\varphi(\stratx)=1-\stratx$ is a topological conjugacy between the dynamical systems $([0,1],\updmap)$ and $([0,1],\widetilde{\updmap})$.

Note that the maps $\widetilde{\switch}_{AB}$ and $\widetilde{\switch}_{BA}$ in \eqref{def_tilda_switch} are Lipschitz continuous. From \eqref{eq:switch_rates_sgn} and the fact that 

\begin{equation}\label{eq:payoff_vector_equiv}
\widetilde{\pay}_{1}(\stratx)>\widetilde{\pay}_{2}(\stratx)\iff \stratx<\widetilde{\eq}\iff1-\stratx>\eq\iff\pay_{2}(1-\stratx)>\pay_{1}(1-\stratx),
\end{equation}
we obtain conditions
$$
\begin{aligned}
	\sign\left( \widetilde{\switch}_{BA}(x)\right)&=\sign\left( \switch_{AB}(1-x)\right)=\sign\left(\left[u_B(1-x)-u_A(1-x) \right]_+  \right)=\sign\left(\left[\widetilde{u}_A(x)-\widetilde{u}_B(x) \right]_+  \right),\\
	\sign\left( \widetilde{\switch}_{AB}(x)\right)&=\sign\left( \switch_{BA}(1-x)\right)=\sign\left(\left[u_A(1-x)-u_B(1-x) \right]_+  \right)=\sign\left(\left[\widetilde{u}_B(x)-\widetilde{u}_A(x) \right]_+  \right), 
\end{aligned}
$$
so $\widetilde{\switch}_{AB}$ and $\widetilde{\switch}_{BA}$ are well-defined switch rates of an innovative revision protocol.
In particular, the map $\widetilde{\updmap}$ is a piecewise linear bimodal map with local minimum at $\stratclalt:=\varphi(\stratcr)$ and local maximum at $\stratcralt:=\varphi(\stratcl)$, and is also Li-Yorke chaotic. Moreover, by topological conjugacy argument $\widetilde{p}$ is the unique fixed point of $\widetilde{\updmap}$ and that $\widetilde{p}$ is repelling (see Figure~\ref{fig:g_and_gtilde}).
\end{proof}



\section{Proof of Theorem \ref{thm:anticoord-chaos} for imitative revision protocols}\label{app:imitative}


For the proofs of the results describing the behavior of the agents following an imitative revision protocol we will need the following lemma. It allows us to bind the cases when the Nash equilibrium is $p \in (0, \frac 12)$ and when it is $\widetilde{p} \in (\frac 12,1)$.

\begin{lemma}\label{lem:symmetry}
Let $\updmap$ and $\widetilde{\updmap}$ be the maps of form~\eqref{eq:game_dynamics_imit} corresponding to conditional imitation rates $\iswitch_{BA}$, $\iswitch_{AB}$ and $\widetilde{\iswitch}_{BA}$, $\widetilde{\iswitch}_{AB}$ that satisfy the following condition:
    \begin{equation}\label{eq:imit_symmetry}
	\widetilde{\iswitch}_{BA}(\stratx)-\widetilde{\iswitch}_{AB}(\stratx)=\iswitch_{AB}(1-\stratx)-\iswitch_{BA}(1-\stratx),
	\end{equation}
	for every point $\stratx\in[0,1]$. Assume that $\updmap$ is a self-map of the interval $[0,1]$. Then $\widetilde{\updmap}$ is also a self-map of $[0,1]$. Moreover, the dynamical systems $([0,1],\updmap)$ and $([0,1],\widetilde{\updmap})$ are topologically conjugate by the homeomorphism $\varphi(x)=1-x$.
\end{lemma}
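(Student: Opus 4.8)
The plan is to verify directly that $\varphi(\stratx)=1-\stratx$ conjugates the two systems, and then to read off the self-map property as a formal consequence of that conjugacy.

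First I would record the explicit shape of both update maps. By \eqref{eq:game_dynamics_imit}, writing $h(\stratx)=\iswitch_{BA}(\stratx)-\iswitch_{AB}(\stratx)$ and $\widetilde{h}(\stratx)=\widetilde{\iswitch}_{BA}(\stratx)-\widetilde{\iswitch}_{AB}(\stratx)$, one has $\updmap(\stratx)=\stratx\big(1+\step(1-\stratx)h(\stratx)\big)$ and $\widetilde{\updmap}(\stratx)=\stratx\big(1+\step(1-\stratx)\widetilde{h}(\stratx)\big)$. The hypothesis \eqref{eq:imit_symmetry} is then precisely the identity $\widetilde{h}(\stratx)=-h(1-\stratx)$, and this is the form in which I intend to use it.

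Second, I would note that $\varphi(\stratx)=1-\stratx$ is an involutive homeomorphism of $[0,1]$, so $\varphi^{-1}=\varphi$, and compute the conjugate $\varphi\circ\updmap\circ\varphi$ at an arbitrary $\stratx\in[0,1]$. Substituting $1-\stratx$ into $\updmap$ yields $\updmap(1-\stratx)=(1-\stratx)\big(1+\step\stratx\,h(1-\stratx)\big)$; applying $\varphi$ and expanding gives
\[
1-(1-\stratx)\big(1+\step\stratx\,h(1-\stratx)\big)=\stratx\big(1-\step(1-\stratx)h(1-\stratx)\big).
\]
By the rewritten hypothesis $\widetilde{h}(\stratx)=-h(1-\stratx)$, the right-hand side equals $\stratx\big(1+\step(1-\stratx)\widetilde{h}(\stratx)\big)=\widetilde{\updmap}(\stratx)$. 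This single algebraic identity, $\widetilde{\updmap}=\varphi\circ\updmap\circ\varphi$, is the crux of the lemma.

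Third, I would draw both conclusions from this identity. Since $\updmap$ maps $[0,1]$ into itself and $\varphi$ is a homeomorphism of $[0,1]$, the composition $\widetilde{\updmap}=\varphi\circ\updmap\circ\varphi$ automatically maps $[0,1]$ into itself, which gives the self-map claim with no further work. The same relation, written as $\widetilde{\updmap}=\varphi^{-1}\circ\updmap\circ\varphi$ with $\varphi$ a homeomorphism, is by definition a topological conjugacy between $([0,1],\updmap)$ and $([0,1],\widetilde{\updmap})$. I do not expect a genuine obstacle here: the only point requiring care is the bookkeeping in the expansion, where the cancellation $1-(1-\stratx)=\stratx$ must be tracked so that the argument $1-\stratx$ of $h$ is preserved and the sign flips exactly to match \eqref{eq:imit_symmetry}; everything else is formal.
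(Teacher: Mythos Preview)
Your proposal is correct and follows essentially the same approach as the paper: rewrite \eqref{eq:imit_symmetry} as $\widetilde{h}(\stratx)=-h(1-\stratx)$, then verify the algebraic identity $(\varphi^{-1}\circ\updmap\circ\varphi)(\stratx)=\stratx\big(1-\step(1-\stratx)h(1-\stratx)\big)=\widetilde{\updmap}(\stratx)$ directly. Your version is simply more explicit about the self-map consequence and the involutive nature of $\varphi$, which the paper leaves implicit.
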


\begin{proof}
Assume that the conditional imitation rates satisfy the condition~\eqref{eq:imit_symmetry}. Then for every point $\stratx\in[0,1]$ we have $\widetilde{h}(\stratx)=-h(1-\stratx)$ and consequently
\begin{equation*}
		\left(\varphi^{-1}\circ \updmap\circ \varphi\right)(\stratx)=\stratx\big(1-\delta(1-\stratx)h(1-\stratx)\big)=\stratx\big(1+\delta(1-\stratx)\widetilde{h}(\stratx)\big)=\widetilde{\updmap}(\stratx),
\end{equation*}
which completes the proof.
\end{proof}

In the proof of Theorem \ref{thm:anticoord-chaos} for the imitative case we will also assume that $\updmap \in \mathcal{B}_L$, that is, the update map is piecewise linear and bimodal. 
We will show the proof of Theorem \ref{thm:anticoord-chaos} in three steps.

\vspace{0.2cm}

\noindent {\bf Step 1. Construction of a family of bimodal maps that satisfy the assumptions of Proposition ~\ref{lem:chaos_cond}.}

\vspace{0.2cm}

Let $\bimap\colon[0,1]\to[0,1]$ be a bimodal map with local maximum at $\stratcl$ and local minimum at $\stratcr$, where $0<\stratcl<\stratcr<1$ (i.e., the map $\bimap$ is increasing on $(0,\stratcl)$ and $(\stratcr,1)$). One can observe the following consequence of Proposition \ref{lem:chaos_cond}.

\begin{lemma}\label{cor:lin_bimodal2}
	Let $0<\stratcl<\stratcr<1$ and $\bimap \in \mathcal{B}_L$ be a map of the form 
    \begin{equation}\label{eq:g_form}
	\bimap(x)=\begin{cases}
		\frac{x}{\stratcl}, &\text{for}\ x\in[0,\stratcl),\\
		1-\beta_2(x-\stratcl), &\text{for}\ x\in [\stratcl,\stratcr),\\
		1-\beta_2(\stratcr-\stratcl)\left(1-\frac{x-\stratcr}{1-\stratcr} \right), &\text{for}\ x\in [\stratcr,1],
	\end{cases}
\end{equation}
    where $\beta_2\in\widetilde{B}_2:=\left(\frac{1-\stratcl}{\stratcr-\stratcl},\frac{1}{\stratcr-\stratcl} \right]$.
	Then $\bimap$ is a well-defined bimodal map with local maximum at $\stratcl$ and local minimum at $\stratcr$. Moreover, there is a point $x\in(\stratcl,\stratcr)$ such that $\bimap(x)<x<\bimap^3(x)$.
\end{lemma}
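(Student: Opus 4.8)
The plan is to follow the same strategy as in the proof of Lemma~\ref{cor:lin_bimodal1}, but now invoking the unprimed branch of Proposition~\ref{lem:chaos_cond}, since the map in \eqref{eq:g_form} is increasing–decreasing–increasing rather than decreasing–increasing–decreasing. Accordingly, I would take $z_l=\stratcl$ and $z_r=\stratcr$ and aim to verify conditions \ref{lem:chaos_cond1}, \ref{lem:chaos_cond2} together with $\bimap^2(z_l)>z_r$.

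First I would check that $\bimap$ is a well-defined continuous self-map of $[0,1]$. Continuity at the breakpoints is immediate from the prescribed form: the first lap gives $\bimap(\stratcl^-)=\stratcl/\stratcl=1$, matching $\bimap(\stratcl)=1$ on the second lap, while the second lap gives the left limit $1-\beta_2(\stratcr-\stratcl)$ at $\stratcr$, matching the third lap there. The endpoints yield $\bimap(0)=0$ and $\bimap(1)=1$, the two extra fixed points characteristic of imitative dynamics. Thus $\bimap$ increases from $0$ to $1$ on $[0,\stratcl]$, decreases to its minimum $1-\beta_2(\stratcr-\stratcl)$ at $\stratcr$, and increases back to $1$ on $[\stratcr,1]$; hence it is bimodal with local maximum at $\stratcl$ and local minimum at $\stratcr$. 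The only constraint for $\bimap([0,1])\subseteq[0,1]$ is that the minimum be nonnegative, i.e. $\beta_2(\stratcr-\stratcl)\le 1$, which is exactly the upper bound $\beta_2\le\frac{1}{\stratcr-\stratcl}$ in the definition of $\widetilde{B}_2$.

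Next I would verify the hypotheses of Proposition~\ref{lem:chaos_cond}. Condition~\ref{lem:chaos_cond2} reads $\bimap(\stratcl)>\stratcr$, which holds since $\bimap(\stratcl)=1>\stratcr$. Condition~\ref{lem:chaos_cond1} reads $\bimap(\stratcr)<\stratcl$; as $\bimap(\stratcr)=1-\beta_2(\stratcr-\stratcl)$, this is equivalent to $\beta_2(\stratcr-\stratcl)>1-\stratcl$, that is $\beta_2>\frac{1-\stratcl}{\stratcr-\stratcl}$, precisely the (open) lower bound of $\widetilde{B}_2$. Finally, the requirement $\bimap^2(z_l)>z_r$ becomes $\bimap^2(\stratcl)=\bimap(\bimap(\stratcl))=\bimap(1)=1>\stratcr$, which holds. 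Proposition~\ref{lem:chaos_cond} then supplies a point $x\in(\stratcl,\stratcr)$ with $\bimap(x)<x<\bimap^3(x)$, completing the argument.

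There is essentially no hard step here; the only thing to watch is that the two endpoints of $\widetilde{B}_2$ correspond to two distinct requirements—the upper bound to well-definedness (the minimum staying in $[0,1]$) and the lower bound to condition~\ref{lem:chaos_cond1}—and that $\bimap(1)=1$ renders the second-iterate condition automatic. This last observation, that $1$ is a fixed point so $\bimap^2(\stratcl)=1$, is what lets the proof conclude without any genuine computation of a second iterate.
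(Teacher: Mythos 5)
Your proof is correct and follows essentially the same route as the paper: both arguments apply Proposition~\ref{lem:chaos_cond} with $z_l=\stratcl$, $z_r=\stratcr$ via the three conditions $\bimap(\stratcr)<\stratcl$, $\bimap(\stratcl)=1>\stratcr$ and $\bimap^2(\stratcl)=\bimap(1)=1>\stratcr$, with the two endpoints of $\widetilde{B}_2$ accounting for exactly the same two requirements you identify. The only cosmetic difference is that the paper reaches these facts by writing $\bimap=1-g$ for a map $g$ from the family of Lemma~\ref{cor:lin_bimodal1}, whereas you verify them directly on $\bimap$, which is if anything more streamlined.
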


\begin{proof}
First, observe that if a continuous bimodal map $f \colon[0,1]\to[0,1]$ with critical points $\stratcl<\stratcr$, satisfies conditions
\begin{multicols}{3}
	\begin{enumerate}[$(1'')$]
		\item\label{prop:cond1prim} $f(\stratcr)<\stratcl$,
		\item\label{prop:cond2prim} $f(\stratcl)=1$,
		\item\label{prop:cond3prim} $f(1)>\stratcr$,
	\end{enumerate}
    \end{multicols}
	then, by Proposition \ref{lem:chaos_cond}, $f(x)<x<f^3(x)$ for some point $x\in(\stratcl,\stratcr)$. 

Now, let $g\colon[0,1]\to[0,1]$ be a map of the form \eqref{eq:f_form} which additionally satisfies the following conditions:
$$ 
 g(0)=1,\quad g(1)=0\quad\text{and}\quad g(\stratcr)\in(1-\stratcl,1].
$$
Using the form of the map $g$ we obtain equivalences
\begin{equation*}
	\begin{aligned}
		g(0)=1&\iff \beta_1=-\frac{1}{\stratcl},\\
		g(1)=0&\iff \beta_3=-\frac{\beta_2(\stratcr-\stratcl)}{1-\stratcr},\\
		g(\stratcr)\in(1-\stratcl,1]&\iff\beta_2\in\widetilde{B}_2:=\left(\frac{1-\stratcl}{\stratcr-\stratcl},\frac{1}{\stratcr-\stratcl} \right].
	\end{aligned}
\end{equation*}
In particular, the form of the map $g$ reduces to
\begin{equation}\label{eq:f_form2}
	g(x)=\begin{cases}
		-\frac{x}{\stratcl}+1, &\text{for}\ x\in[0,\stratcl),\\
		\beta_2(x-\stratcl), &\text{for}\ x\in [\stratcl,\stratcr),\\
		\beta_2(\stratcr-\stratcl)\left(1-\frac{x-\stratcr}{1-\stratcr} \right), &\text{for}\ x\in [\stratcr,1],
	\end{cases}
\end{equation}
so it depends only on the parameters $0<\stratcl<\stratcr<1$ and $\beta_2\in\widetilde{B}_2$.

Next, observe that for the map $\bimap$ in \eqref{eq:g_form} and the map $g$ in \eqref{eq:f_form2} we have $\bimap(x)=1-g(x)$
(cf. Figure~\ref{fig:f_and_g}). Thus, $\bimap$ is a piecewise linear bimodal map with local maximum at $\stratcl$ and local minimum at $\stratcr$ such that
$$
\bimap(0)=1-g(0)=0,\quad \bimap(1)=1-g(1)=1,\quad \bimap(\stratcl)=1-g(\stratcl)=1
$$
and 
$$
\bimap(\stratcr)=1-g(\stratcr)\in[0,\stratcl).
$$
Therefore the map $\bimap$ satisfies conditions \ref{prop:cond1prim}--\ref{prop:cond3prim} from the beginning of the proof.
\end{proof}

\begin{figure}[!h]
	\centering
	\begin{tikzpicture}[scale=5.3]
		\draw[thick] (0,0) -- (1,0);
		\draw[thick] (0,0) -- (0,1);
		\draw (0,1) node[left]{1};
		\draw (1,0) node[below]{1};
		\draw (-0.02,0) node[below]{0}; 
		\draw[thick,domain=0:1,variable=\y] plot ({1},{\y});
		\draw[thick,domain=0:1,variable=\x] plot ({\x},{1});
		\draw[domain=0:1,smooth,variable=\x,black,dashed] plot ({\x},{\x});
		\draw[domain=0:0.2,smooth,variable=\x,blue,very thick] plot ({\x},{-\x/0.2+1});
		\draw[domain=0.2:0.6,smooth,variable=\x,blue,very thick] plot ({\x},{2.3*(\x-0.2)});
		\draw[domain=0.6:1,smooth,variable=\x,blue,very thick] plot ({\x},{2.3*(0.6-0.2)*(1-(\x-0.6)/(1-0.6))});
		\filldraw [red] (0.2,0) circle (0.2pt) node[anchor=north] {$\stratcl$};
		\filldraw [red] (0.6,0) circle (0.2pt) node[anchor=north] {$\stratcr$};
	\end{tikzpicture}
	\qquad
	\begin{tikzpicture}[scale=5.3]
		\draw[thick] (0,0) -- (1,0);
		\draw[thick] (0,0) -- (0,1);
		\draw (0,1) node[left]{1};
		\draw (1,0) node[below]{1};
		\draw (-0.02,0) node[below]{0}; 
		\draw[thick,domain=0:1,variable=\y] plot ({1},{\y});
		\draw[thick,domain=0:1,variable=\x] plot ({\x},{1});
		\draw[domain=0:1,smooth,variable=\x,black,dashed] plot ({\x},{\x});
		\draw[domain=0:0.2,smooth,variable=\x,blue,very thick] plot ({\x},{\x/0.2});
		\draw[domain=0.2:0.6,smooth,variable=\x,blue,very thick] plot ({\x},{1-2.3*(\x-0.2)});
		\draw[domain=0.6:1,smooth,variable=\x,blue,very thick] plot ({\x},{1-2.3*(0.6-0.2)*(1-(\x-0.6)/(1-0.6))});
		\filldraw [red] (0.2,0) circle (0.2pt) node[anchor=north] {$\stratcl$};
		\filldraw [red] (0.6,0) circle (0.2pt) node[anchor=north] {$\stratcr$};
	\end{tikzpicture}
	\caption{Piecewise linear bimodal maps $g$ of the form~\eqref{eq:f_form2} (on the left) and $f$ of the form~\eqref{eq:g_form} (on the right) for the parameters $\stratcl=0{.}2$, $\stratcr=0{.}6$ and $\beta_2=2{.}3$.}
	\label{fig:f_and_g}
\end{figure}

\vspace{0.2cm}

\noindent{\bf Step 2. Proof of Theorem \ref{thm:anticoord-chaos} for $p\in(0,\frac{1}{2})$.}

\vspace{0.2cm}

We will again consider a $2\times 2$ anti-coordination game $\mathcal{G}\equiv\mathcal{G}(\mathcal{A},v)$ defined by \eqref{eq:game} with a Nash equilibrium $p\in(0,\frac{1}{2})$. We reduce the number of parameters of the model by denoting
\begin{equation}\label{imit:clcrp}
\stratcl:=\frac{p}{2}\quad\text{and}\quad \stratcr:=p+\frac{p^2}{2}.
\end{equation}

\begin{proposition}\label{prop:f_imit_chaotic}
Let $p \in (0,\frac 12)$ and let $\stratcl$ and $\stratcr$ be defined above. Define the switch rates:
$$
\begin{aligned}
\switch_{BA}(x)&=x\iswitch_{BA}(x):=\begin{cases}
	\frac{(2-p)x}{p(1-x)}, &\text{for}\ x\in[0,\frac{p}{2}),\\
	\frac{(x-p)(p-2)}{p(1-x)}, &\text{for}\ x\in[\frac{p}{2},p),\\
	0, &\text{for}\ x\in[p,1],
\end{cases}\\
\switch_{AB}(x)&=(1-x)\iswitch_{AB}(x):=
\begin{cases}
	0, &\text{for}\ x\in[0,p),\\
	\frac{(x-p)(2-p)}{px}, &\text{for}\ x\in[p,p+\frac{p^2}{2}),\\
	\frac{p(p-2)(1-x)}{(p^2+2p-2)x}, &\text{for}\ x\in[p+\frac{p^2}{2},1].\\
\end{cases}
\end{aligned}
$$
Then the switch rates $\switch_{BA}$ and $\switch_{AB}$ are Lipschitz continuous on $[0,1]$.
The map $\updmap$ from \eqref{eq:game_dynamics_imit} has the following form
\begin{equation}\label{eq:map_g_form}
	\updmap(x)=\begin{cases}
		\frac{2}{p}\, x, &\text{for}\ x\in[0,\frac{p}{2}),\\
		-\frac{2(1-p)}{p}\,x +2-p, &\text{for}\ x\in [\frac{p}{2},p+\frac{p^2}{2}),\\
		\frac{2(1-p^2)}{p^2+2p-2}(1-x)+1 , &\text{for}\ x\in [b+\frac{p^2}{2},1].
	\end{cases}
\end{equation}
The map $\updmap$ is piecewise linear, the dynamical system induced by $\updmap$ is Li-Yorke chaotic and has exactly one fixed point $p$ in the open interval $(0,1)$ which is repelling.
\end{proposition}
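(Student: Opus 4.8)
The plan is to reuse the template of the innovative construction in Proposition~\ref{prop:inn_switch_rates}, but arranged so that the update map is \emph{increasing} on its two outer laps---forcing $0$ and $1$ to be fixed points, as imitation requires---and to recognise it as a member of the family \eqref{eq:g_form} handled by Lemma~\ref{cor:lin_bimodal2}. I take $\step=1$ throughout. The first task is to check that the stated rates reproduce \eqref{eq:map_g_form}: I would substitute the piecewise rates into the imitative update rule $\updmap(x)=x+(1-x)\switch_{BA}(x)-x\switch_{AB}(x)$, using that $\pay_A(x)>\pay_B(x)$ precisely when $x<p$ (from \eqref{payoffs} and \eqref{eq:eq}), so that only $\switch_{BA}$ is active on $[0,p)$ and only $\switch_{AB}$ on $(p,1]$. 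On each lap the prefactors $(1-x)$ and $x$ cancel the denominators of the rates, leaving affine expressions: one finds $\updmap(x)=2x/p$ on $[0,\tfrac{p}{2})$, a single line $-\tfrac{2(1-p)}{p}x+2-p$ on $[\tfrac{p}{2},p+\tfrac{p^2}{2})$ (the sub-pieces on either side of $p$ glue together), and the third line of \eqref{eq:map_g_form} on $[p+\tfrac{p^2}{2},1]$, where the simplification uses $1-\stratcr=-\tfrac12(p^2+2p-2)$.

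Next I would match \eqref{eq:map_g_form} with \eqref{eq:g_form} taking $\stratcl=p/2$, $\stratcr=p+p^2/2$ and $\beta_2=\tfrac{2(1-p)}{p}$, and check the admissibility condition $\beta_2\in\widetilde{B}_2=\bigl(\tfrac{1-\stratcl}{\stratcr-\stratcl},\tfrac{1}{\stratcr-\stratcl}\bigr]$. Since $\stratcr-\stratcl=\tfrac{p(1+p)}{2}$, writing all three quantities over the common denominator $p(1+p)$ reduces the upper bound to $2-2p^2\le 2$ (always true) and the lower bound to $2-2p^2>2-p$, i.e. $p<\tfrac12$, which is exactly the hypothesis. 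Lemma~\ref{cor:lin_bimodal2} then certifies that $\updmap$ is a well-defined bimodal self-map with local maximum at $\stratcl$, local minimum at $\stratcr$, and a point $x\in(\stratcl,\stratcr)$ with $\updmap(x)<x<\updmap^3(x)$; by the implications recorded after Proposition~\ref{lem:chaos_cond} in Section~\ref{sec:discrete} this yields a period-$3$ orbit, periodic orbits of all periods, and Li--Yorke chaos.

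For the remaining claims I would argue as follows. Each rate is continuous across the breakpoints $p/2,p,p+p^2/2$ (the one-sided values agree, both vanishing at $p$), and is a rational function whose denominator stays bounded away from $0$ on its domain---$1-x\ge 1-p>\tfrac12$ wherever $\switch_{BA}\neq0$, and $x\ge p>0$ wherever $\switch_{AB}\neq0$---so each rate is $C^1$ with bounded derivative on every closed subinterval and hence Lipschitz. Solving $\updmap(x)=x$ lap by lap: the first lap (slope $2/p>1$) meets the diagonal only at $0$; the middle lap yields the single root $x=p$, which indeed lies in $[\tfrac{p}{2},p+\tfrac{p^2}{2})$; and the third lap meets the diagonal only at $1$, because $\tfrac{2(1-p^2)}{p^2+2p-2}+1=\tfrac{p(2-p)}{p^2+2p-2}\neq 0$ (numerator positive, denominator negative for $p<\tfrac12$). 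Thus $p$ is the only fixed point in $(0,1)$; it sits in the interior of the middle lap, where $\updmap'\equiv-\tfrac{2(1-p)}{p}$, so $\lvert\updmap'(p)\rvert=\tfrac{2(1-p)}{p}>1$ (since $p<\tfrac23$) and $p$ is repelling.

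The genuine subtlety is not any single computation but that the whole construction is tuned to the single inequality $p<\tfrac12$: the same hypothesis places $\beta_2$ inside $\widetilde{B}_2$ (hence makes Lemma~\ref{cor:lin_bimodal2} applicable) \emph{and} makes $p$ repelling, and one must consistently track the sign of $p^2+2p-2$ (negative on the relevant range) when simplifying the outer lap and the third fixed-point equation. The remaining algebra---the cancellations against $(1-x)$ and $x$, and the gluing of the two middle sub-pieces into one line---is routine but must be carried out consistently across all four sub-intervals.
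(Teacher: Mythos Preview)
Your proposal is correct and follows essentially the same approach as the paper: identify \eqref{eq:map_g_form} with the family \eqref{eq:g_form} via $\stratcl=\tfrac{p}{2}$, $\stratcr=p+\tfrac{p^2}{2}$, $\beta_2=\tfrac{2(1-p)}{p}$, verify $\beta_2\in\widetilde{B}_2$ (which reduces to $p<\tfrac12$), and invoke Lemma~\ref{cor:lin_bimodal2} and the Li--Yorke machinery of Section~\ref{sec:discrete}; the Lipschitz and repelling-fixed-point arguments are also the same. The only organisational difference is that the paper works in the reverse direction---it starts from the target map, solves $h(x)=(\updmap(x)-x)/(x(1-x))$ to extract $\iswitch_{BA},\iswitch_{AB}$, and then records the switch rates---whereas you take the rates as given and verify they reproduce \eqref{eq:map_g_form}; this is the natural order given how the proposition is stated.
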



\begin{proof}

Let $\updmap$ be a map of the form \eqref{eq:g_form}. From the proof of Lemma \ref{cor:lin_bimodal2} we know that
 $\updmap(\stratcl)=1$ and $\updmap(\stratcr)<\stratcl<\stratcr$. Observe that these conditions guarantee that in the interval $(\stratcl,\stratcr)$ the map $\updmap$ has a unique fixed point
$$
x^*:=\frac{1+\beta_2\stratcl}{1+\beta_2}.
$$
By \eqref{imit:clcrp}
we obtain the inequalities $0<\stratcl<p<\stratcr<1$, because $p\in(0,\frac{1}{2})$. Moreover, note that
$$
x^*=p\iff (1+\beta_2)p=1+\beta_2\stratcl\iff\beta_2=\frac{1-p}{p-\stratcl}\iff\beta_2=\frac{2(1-p)}{p}
$$
and
$$
\frac{2(1-p)}{p}\in\left(\frac{1-\stratcl}{\stratcr-\stratcl},\frac{1}{\stratcr-\stratcl} \right]=\left(\frac{2-p}{p(1+p)},\frac{2}{p(1+p)} \right].
$$
Thus the map $\updmap$ from \eqref{eq:g_form} is given by
\begin{equation*}
	\begin{aligned}
	\updmap(x)&=\begin{cases}
		\frac{x}{\stratcl}, &\text{for}\ x\in[0,\stratcl),\\
		1-\frac{2(1-p)}{p}(x-\stratcl), &\text{for}\ x\in [\stratcl,\stratcr),\\
		1-\frac{2(1-p)}{p}(\stratcr-\stratcl)\left(1-\frac{x-\stratcr}{1-\stratcr} \right), &\text{for}\ x\in [\stratcr,1],
	\end{cases}\\
	&=\begin{cases}
		\frac{2}{p}\, x, &\text{for}\ x\in[0,\frac{p}{2}),\\
		-\frac{2(1-p)}{p}\,x +2-p, &\text{for}\ x\in [\frac{p}{2},p+\frac{p^2}{2}),\\
		\frac{2(1-p^2)}{p^2+2p-2}(1-x)+1 , &\text{for}\ x\in [p+\frac{p^2}{2},1].
	\end{cases}
	\end{aligned}
\end{equation*}
The map $\updmap$ satisfies the assumptions of Lemma \ref{cor:lin_bimodal2}, thus it is Li-Yorke chaotic. From its form we deduce that $\updmap$ is piecewise linear and that $p$ is the only fixed point in the open interval $(0,1)$.  Moreover, for every point $x\in(\stratcl,\stratcr)$ we have
$$
\updmap'(x)=-\frac{2(1-p)}{p}<-1,
$$
because $0<p<\frac{1}{2}<\frac{2}{3}$. Therefore the fixed point $p$ is repelling.

Our next goal is to find a map $h(x)$ for which the map $\updmap$ can be presented in the form \eqref{eq:game_dynamics_imit}
where $\delta>0$. Let us put $\delta:=1$ and observe that for every point $x\in(0,1)$ simple calculations give
$$
h(x)=\frac{\updmap(x)}{x(1-x)}-\frac{1}{(1-x)}=\frac{\updmap(x)-x}{x(1-x)}.
$$
Furthermore we define
$$
\begin{aligned}
	h(0)&:=\lim\limits_{x\to0^+}h(x)=\lim\limits_{x\to0^+}\frac{\frac{2x}{p}-x}{ x(1-x)}=\frac{2-p}{p},\\
	h(1)&:=\lim\limits_{x\to1^-}h(x)=\lim\limits_{x\to1^-}\frac{\frac{2(1-p^2)}{p^2+2p-2}(1-x)+1-x}{x(1-x)}\\
	&=\lim\limits_{x\to1^-}\frac{\left(1-x\right)\left(\frac{2(1-p^2)}{p^2+2p-2}+1 \right)}{x(1-x)}=\frac{p(2-p)}{p^2+2p-2}.
\end{aligned}
$$
Next we will find the conditional imitation rates $\iswitch_{AB}$ and $\iswitch_{BA}$ for which 
\begin{equation}\label{eq:h_r}
	h(x)=\iswitch_{BA}(x)-\iswitch_{AB}(x),\quad x\in[0,1].
\end{equation}
Let
$$
\begin{aligned}
\iswitch_{BA}(x)&:=\begin{cases}
	h(x), &\text{for}\ x\in[0,p),\\
	0, &\text{for}\ x\in[p,1],\\
\end{cases}
=
\begin{cases}
	\frac{2-p}{p}, &\text{for}\ x=0,\\
	\frac{\updmap(x)-x}{x(1-x)}, &\text{for}\ x\in(0,p),\\
	0, &\text{for}\ x\in[p,1],\\
\end{cases}\\
\iswitch_{AB}(x)&:=\begin{cases}
	0, &\text{for}\ x\in[0,p),\\
	-h(x), &\text{for}\ x\in[p,1],\\
\end{cases}
=
\begin{cases}
	0, &\text{for}\ x\in[0,p),\\
	\frac{x-\updmap(x)}{ x(1-x)}, &\text{for}\ x\in[p,1),\\
	\frac{p(p-2)}{p^2+2p-2}, &\text{for}\ x=1.\\
\end{cases}
\end{aligned}
$$
It follows directly from the definition of $\iswitch_{BA}$ and $\iswitch_{AB}$ that the condition~\eqref{eq:h_r} is satisfied. Furthermore, the following equivalences hold (cf.~\eqref{eq:payoff_vector_equiv})
\begin{equation}\label{eq:cond_imit_rates_monotone}
u_A(x)\geq u_B(x) \iff x\leq p \iff \iswitch_{BA}(x)\geq \iswitch_{AB}(x),
\end{equation}
so the maps $\iswitch_{BA}$ and $\iswitch_{AB}$ also satisfy the condition~\eqref{eq:net_cond_imit_rates}. Using the form~\eqref{eq:map_g_form} of the map $\updmap$ we get
$$
\begin{aligned}
\iswitch_{BA}(x)&=\begin{cases}
	\frac{2-p}{p(1-x)}, &\text{for}\ x\in[0,\frac{p}{2}),\\
	\frac{(x-p)(p-2)}{px(1-x)}, &\text{for}\ x\in[\frac{p}{2},p),\\
	0, &\text{for}\ x\in[p,1],
\end{cases}\\
\iswitch_{AB}(x)&=
\begin{cases}
	0, &\text{for}\ x\in[0,p),\\
	\frac{(x-p)(2-p)}{px(1-x)}, &\text{for}\ x\in[p,p+\frac{p^2}{2}),\\
	\frac{p(p-2)}{(p^2+2p-2)x}, &\text{for}\ x\in[p+\frac{p^2}{2},1].\\
\end{cases}
\end{aligned}
$$
Observe that maps $\iswitch_{BA}$ and $\iswitch_{AB}$ are continuous on $[0,1]$ and have bounded derivative on each interval $(0,\frac{p}{2})$, $(\frac{p}{2},p)$, $(p,1)$ and $(0,p)$, $(p,p+\frac{p^2}{2})$, $(p+\frac{p^2}{2},1)$, respectively. Hence they are Lipschitz continuous on $[0,1]$. Thus the maps $\iswitch_{BA}$, $\iswitch_{AB}$ are indeed conditional imitation rates and the corresponding switch rates
$$
\begin{aligned}
\switch_{BA}(x)&:=x\iswitch_{BA}(x)=\begin{cases}
	\frac{(2-p)x}{p(1-x)}, &\text{for}\ x\in[0,\frac{p}{2}),\\
	\frac{(x-p)(p-2)}{p(1-x)}, &\text{for}\ x\in[\frac{p}{2},p),\\
	0, &\text{for}\ x\in[p,1],
\end{cases}\\
\switch_{AB}(x)&:=(1-x)\iswitch_{AB}(x)=
\begin{cases}
	0, &\text{for}\ x\in[0,p),\\
	\frac{(x-p)(2-p)}{px}, &\text{for}\ x\in[p,p+\frac{p^2}{2}),\\
	\frac{p(p-2)(1-x)}{(p^2+2p-2)x}, &\text{for}\ x\in[p+\frac{p^2}{2},1].\\
\end{cases}
\end{aligned}
$$
form an imitative revision protocol.
\end{proof}

\vspace{0.2cm}

\noindent{\bf Step 3. Proof of Theorem~\ref{thm:anticoord-chaos} for $\widetilde{p}\in(\frac{1}{2},1)$.}

\vspace{0.2cm}

Now, we consider game $\widetilde{\mathcal{G}}\equiv\widetilde{\mathcal{G}}(\mathcal{A},\widetilde{\payv})$ with the unique (symmetric) Nash equilibrium $\widetilde{\eq}\in(\frac{1}{2},1)$. 
Then, similarly as in the innovative case, there exists a population game $\game \equiv \gamefull$ with the unique Nash equilibrium $\eq=1-\widetilde{\eq}\in (0,\frac 12)$ and the conditional imitation rates $\iswitch_{AB}$, $\iswitch_{BA}$ from Proposition \ref{prop:f_imit_chaotic}. Thus, to complete the proof of Theorem \ref{thm:anticoord-chaos} for the imitative case it is sufficient to show the following fact.

\begin{proposition}\label{prop:chaos_imit_p>}
Let $\widetilde{\mathcal{G}}\equiv\widetilde{\mathcal{G}}(\mathcal{A},\widetilde{v})$ be a $2\times 2$ anti-coordination game defined by \eqref{eq:game} with a Nash equilibrium $\widetilde{\eq}\in(\frac{1}{2},1)$. Define
\begin{equation}\label{def_tilda_switch_imit}
    \widetilde{\iswitch}_{BA}(x):=\iswitch_{AB}(1-x),\quad\text{and}\qquad\widetilde{\iswitch}_{AB}(x):=\iswitch_{BA}(1-x),
\end{equation}
for every point $\stratx\in[0,1]$, where $\iswitch_{AB}$ and $\iswitch_{BA}$ are given in Proposition \ref{prop:f_imit_chaotic}. Then the map
\[
\widetilde{\updmap}(\stratx)=\stratx\big(1+\step(1-\stratx)\widetilde{h}(\stratx)\big),
\]
with $\widetilde{h}(\stratx)=\widetilde{\iswitch}_{BA}(\stratx)-\widetilde{\iswitch}_{AB}(\stratx)$
is a piecewise linear bimodal map with local minimum at $\stratclalt:=1-\stratcr$ and local maximum at $\stratcralt:=1-\stratcl$, and the dynamical system induced by $\widetilde{\updmap}$ is Li-Yorke chaotic. Moreover, $\widetilde{\eq}$ is the unique fixed point of $\widetilde{\updmap}$ and it is repelling.
\end{proposition}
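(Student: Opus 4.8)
The plan is to realise $\widetilde{\updmap}$ as a topological conjugate of the chaotic map $\updmap$ already produced in Proposition~\ref{prop:f_imit_chaotic} for the companion game $\game\equiv\gamefull$ with Nash equilibrium $\eq=1-\widetilde{\eq}\in(0,\tfrac12)$, and then to transport every asserted feature across the conjugacy. The mechanism is Lemma~\ref{lem:symmetry}, whose hypothesis \eqref{eq:imit_symmetry} is met automatically: subtracting the two defining relations in \eqref{def_tilda_switch_imit} gives, for every $\stratx\in[0,1]$,
\[
\widetilde{\iswitch}_{BA}(\stratx)-\widetilde{\iswitch}_{AB}(\stratx)=\iswitch_{AB}(1-\stratx)-\iswitch_{BA}(1-\stratx),
\]
which is \eqref{eq:imit_symmetry} verbatim. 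Since Proposition~\ref{prop:f_imit_chaotic} guarantees that $\updmap$ is a self-map of $[0,1]$, Lemma~\ref{lem:symmetry} yields at once that $\widetilde{\updmap}$ is also a self-map of $[0,1]$ and that $\varphi(\stratx)=1-\stratx$ is a topological conjugacy between $([0,1],\updmap)$ and $([0,1],\widetilde{\updmap})$.

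With the conjugacy established the rest is bookkeeping. First I would confirm that $\widetilde{\iswitch}_{AB},\widetilde{\iswitch}_{BA}$ are bona fide conditional imitation rates: Lipschitz continuity is preserved under precomposition with the affine map $\stratx\mapsto1-\stratx$, and the net-rate monotonicity \eqref{eq:net_cond_imit_rates} follows from \eqref{eq:cond_imit_rates_monotone} together with the anti-coordination equivalence $\widetilde{\pay}_A(\stratx)\geq\widetilde{\pay}_B(\stratx)\iff\stratx\leq\widetilde{\eq}\iff1-\stratx\geq\eq$, exactly as in \eqref{eq:payoff_vector_equiv}. Li--Yorke chaos and the existence of periodic orbits of every period are invariants of topological conjugacy, so they transfer from $\updmap$ to $\widetilde{\updmap}$. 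Piecewise linearity is preserved because $\widetilde{\updmap}=\varphi\circ\updmap\circ\varphi$ is a composition of $\updmap$ with the affine involution $\varphi$; the critical points $\stratcl,\stratcr$ of $\updmap$ fixed by \eqref{imit:clcrp} are carried to $\varphi(\stratcl)=\stratcralt$ and $\varphi(\stratcr)=\stratclalt$, and since $\varphi$ reverses orientation the local maximum of $\updmap$ at $\stratcl$ becomes a local minimum of $\widetilde{\updmap}$ while its local minimum at $\stratcr$ becomes a local maximum, so $\widetilde{\updmap}$ is again bimodal with critical points at $1-\stratcr$ and $1-\stratcl$.

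For the fixed-point assertions, $\varphi$ maps the fixed-point set of $\updmap$ bijectively onto that of $\widetilde{\updmap}$: the interior fixed point $\eq$ is sent to $\varphi(\eq)=1-\eq=\widetilde{\eq}$, the unique interior fixed point of $\widetilde{\updmap}$, while the boundary fixed points $0$ and $1$ are merely interchanged. Repelling then follows from the chain rule: by \eqref{imit:clcrp} the point $\eq$ lies in the interior of the middle linear lap of $\updmap$ (cf.~\eqref{eq:map_g_form}), so that $\updmap$ is differentiable there; writing $\widetilde{\updmap}=\varphi\circ\updmap\circ\varphi$ with $\varphi'\equiv-1$, the two factors of $-1$ cancel and $\widetilde{\updmap}'(\widetilde{\eq})=\updmap'(\eq)=-\tfrac{2(1-\eq)}{\eq}$, whose modulus exceeds $1$ for $\eq\in(0,\tfrac12)$; hence $\widetilde{\eq}$ is repelling.

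There is no substantial obstacle here, since the whole argument is a specialisation of Lemma~\ref{lem:symmetry}; the single point demanding care is the orientation-reversal of $\varphi$, which interchanges the two types of critical point and so must be tracked when identifying which of $1-\stratcr$, $1-\stratcl$ carries the local maximum and which the local minimum of $\widetilde{\updmap}$.
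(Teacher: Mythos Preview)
Your proof is correct and follows the same route as the paper: verify that \eqref{def_tilda_switch_imit} yields the symmetry hypothesis \eqref{eq:imit_symmetry}, invoke Lemma~\ref{lem:symmetry} to obtain the conjugacy $\widetilde{\updmap}=\varphi\circ\updmap\circ\varphi$ with $\varphi(\stratx)=1-\stratx$, and then transport piecewise linearity, bimodality, Li--Yorke chaos, and the repelling fixed point from Proposition~\ref{prop:f_imit_chaotic}. Your treatment is in fact slightly more explicit than the paper's---you compute $\widetilde{\updmap}'(\widetilde{\eq})=\updmap'(\eq)$ via the chain rule rather than appealing to conjugacy alone, and you correctly note that $0$ and $1$ are also (boundary) fixed points so that $\widetilde{\eq}$ is only the unique \emph{interior} one; your identification of which of $\stratclalt,\stratcralt$ carries the local maximum versus minimum agrees with the paper's proof (and with Figure~\ref{fig:g_and_gtilde}), even though the wording of the proposition statement itself has these swapped.
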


\begin{proof}
Note that the maps $\widetilde{\iswitch}_{AB}$ and $\widetilde{\iswitch}_{BA}$ are Lipschitz continuous. Moreover, it follows from \eqref{eq:payoff_vector_equiv} and \eqref{eq:cond_imit_rates_monotone} that
$$
\widetilde{\pay}_{1}(\stratx)\geq\widetilde{\pay}_{2}(\stratx)\iff \pay_{2}(1-\stratx)\geq\pay_{1}(1-\stratx)
\iff \iswitch_{BA}(1-x)\leq \iswitch_{AB}(1-x)\iff \widetilde{\iswitch}_{BA}(x)\geq \widetilde{\iswitch}_{AB}(x),
$$ 
so $\widetilde{\iswitch}_{AB}$ and $\widetilde{\iswitch}_{BA}$ are well-defined conditional imitation rates. Then the update map $\widetilde{\updmap}$ is given by
\begin{equation}\label{app:ftilde}
 \widetilde{\updmap}(x)=x\big(1+\delta(1-x)\widetilde{h}(x)\big)=x\big(1+\delta(1-x)(\widetilde{\iswitch}_{BA}(x)-\widetilde{\iswitch}_{AB}(x))\big).
\end{equation}

Next, observe that for every point $x\in[0,1]$ the relation~\eqref{eq:imit_symmetry} is satisfied. Thus, by  Lemma~\ref{lem:symmetry}, the map $\widetilde{\updmap}$ in \eqref{app:ftilde} is topologically conjugate to the map $\updmap$ (given by \eqref{eq:map_g_form}) by the homeomorphism $\varphi(x)=1-x$, i.e.
$$
\widetilde{\updmap}(x)=\left(\varphi^{-1}\circ \updmap\circ \varphi\right)(x).
$$ 
As a consequence $\widetilde{\updmap}$ is a Li-Yorke chaotic piecewise linear bimodal map with local maximum at $\stratclalt:=\varphi(\stratcr)$ and local minimum at $\stratcralt:=\varphi(\stratcl)$ (see Figure~\ref{fig:g_and_gtilde}). Moreover, $\widetilde{p}$ is a repelling fixed point of $\widetilde{\updmap}$.
\end{proof}

Since the case $p=\frac{1}{2}$ was shown in \cite{falniowski2024discrete},
the proof of the Theorem~\ref{thm:anticoord-chaos} for the imitative case is completed.

\section{Proofs from Section~\ref{sec:thm2}}\label{app:proof-thm2}

\begin{proof}[Proof of Proposition \ref{thm:imitation12}]
Let us consider game dynamics induced by the conditional imitation rates $\iswitch_{AB}$ and $\iswitch_{BA}$, that is
$$
\updmap(x)=x\big(1+\step(1-x)h(x)\big),
$$
where $h(x)=\iswitch_{BA}(x)-\iswitch_{AB}(x)$. From the condition \eqref{eq:net_cond_imit_rates} and the form of map $\updmap$, we have
$$
\begin{aligned}
x\in[0,p)& \implies h(x)>0 \implies \updmap(x)>x,\\
x\in(p,1]&\implies h(x)<0\implies \updmap(x)<x.
\end{aligned}
$$
Moreover, by \eqref{eq:iswitchimit} we have $h(1-x)=-h(x)$ and consequently
\begin{equation}\label{eq:f_imit_symmetry}
\begin{aligned}
1-\updmap(1-x)&=1-(1-x)\big(1+\step xh(1-x)\big)=1-(1-x)\big(1-\step xh(x)\big)\\
&=x+\step xh(x)-\step x^2h(x)=x\big(1+\step(1-x)h(x)\big)=\updmap(x).
\end{aligned}
\end{equation}
Since the map $\updmap$ is continuous on the interval $[0,1]$, there exists a point $z_l^\step\in[0,p]$ such that $\updmap(x)\leq \updmap(z_l^\step)$ for any $x\in[0,p]$. Let us denote $z_r^\step:=1-z_l^\step$. Then, it follows from \eqref{eq:f_imit_symmetry} that $\updmap(x)\geq \updmap(z_r^\step)$ for any point $x\in[p,1]$, and that $\updmap(z_r^\step)=0\iff \updmap(z_l^\step)=1$. 
But $\updmap(z_l^\step)=1$ when $\step=\frac{1}{\stratcl^{\step} (1-h(\stratcl^\step))}$.
Thus, for $\step^*:=\frac{1}{\stratcl^{\step} (1-h(\stratcl^\step))}$ we have $\updmap([0,1])=[0,1]$ and the points $z_l^{\step^*}$, $z_r^{\step^*}$ satisfy the assumptions of Proposition \ref{lem:chaos_cond}. By   \citet{li1982odd} the map $\updmap$ has periodic orbit of period 3, and by \cite{liyorke} the map $\updmap$ is Li-Yorke chaotic and has a periodic point of any period.
\end{proof}

\begin{lemma}\label{prop:f_max}
Let $\updmap_{max}$ be the map from \eqref{eq:PPI_map_pert} defined by the parameters $(\eta,\xi,\step) = \left( \frac{4p}{(1-p)^2(b-d)},\frac{4}{p(b-d)},1 \right)$.
	For any interior Nash equilibrium $p\in(0,\frac{1}{2}]$ the map $\updmap_{max}$ has local maximum at $\stratcl=\frac{p}{2}$, local minimum at $\stratcr=\frac{p+1}{2}$ and the image of $\updmap_{max}$ is equal to $[0,1]$. Moreover
	\begin{enumerate}[$(1)$]
		\item\label{prop:f_max_1} if $p\in[\frac{1}{5},\frac{1}{2}]$, then $\updmap_{max}$ is a bimodal map;
		\item\label{prop:f_max_2} if $p\in(0,\frac{1}{5})$, then $\updmap_{max}$ has an additional local maximum at $x_p:=\frac{p+1}{6}$.
	\end{enumerate}
\end{lemma}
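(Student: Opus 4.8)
The plan is to substitute the prescribed parameters into \eqref{eq:PPI_map_pert}, reducing $\updmap_{max}$ to a piecewise cubic in which $p$ is the only surviving constant, and then to locate and classify its critical points by elementary calculus. With $\step=1$, $\xi=\frac{4}{p(b-d)}$ and $\eta=\frac{4p}{(1-p)^2(b-d)}$, the combinations $\step\xi\frac{b-d}{p}$ and $\step\eta\frac{b-d}{p}$ collapse to $\frac{4}{p^2}$ and $\frac{4}{(1-p)^2}$ respectively, so that
\[
\updmap_{max}(x)=
\begin{cases}
x\bigl(1+\tfrac{4}{p^2}(1-x)(p-x)\bigr), & x\in[0,p),\\
x\bigl(1-\tfrac{4}{(1-p)^2}(1-x)(x-p)\bigr), & x\in[p,1].
\end{cases}
\]

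First I would differentiate each branch. On $[0,p)$ the derivative is $1+\frac{4}{p^2}\bigl(3x^2-2(1+p)x+p\bigr)$, and setting it to zero gives a quadratic whose discriminant simplifies to the perfect square $(2-p)^2$; its roots are $x=\frac p2$ and $x=\frac{4+p}{6}$. Since $p<\frac12$, the second root exceeds $p$ and so lies outside this branch's domain, leaving $\frac p2$ as the only interior critical point there. An analogous computation on $[p,1]$ yields $3x^2-2(1+p)x+p+\frac{(1-p)^2}{4}=0$ with discriminant $(1+p)^2$, whose roots are $\frac{1+p}{2}$ and $\frac{1+p}{6}$. The root $\frac{1+p}2$ always lies in $[p,1]$, whereas $\frac{1+p}{6}\ge p$ exactly when $p\le\frac15$. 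Evaluating the (linear) second derivatives $\updmap_{max}''$ on each branch classifies $\frac p2$ and $\frac{1+p}6$ as local maxima and $\frac{1+p}2$ as a local minimum. I expect the main bookkeeping effort to be tracking which of these roots falls inside which branch's domain.

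Next I would pin down the image. Direct evaluation gives $\updmap_{max}(0)=0$, $\updmap_{max}(1)=1$, $\updmap_{max}(p)=p$, and the telescoping cancellations $\updmap_{max}(\tfrac p2)=1$ and $\updmap_{max}(\tfrac{1+p}2)=0$. Hence the maximal value $1$ is attained at $\frac p2$ and the minimal value $0$ at $\frac{1+p}2$; by continuity and the intermediate value theorem, $\updmap_{max}([0,1])=[0,1]$, which gives that part of the claim at once.

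Finally I would settle the monotonicity pattern through the one-sided derivatives at the junction $x=p$, which evaluate to $\updmap_{max}'(p^-)=\frac{5p-4}{p}$ and $\updmap_{max}'(p^+)=\frac{1-5p}{1-p}$. For $p\in[\frac15,\frac12]$ both are $\le 0$, so $\updmap_{max}$ decreases across $p$; combined with the critical-point inventory it is increasing on $[0,\frac p2]$, decreasing on $[\frac p2,\frac{1+p}2]$ and increasing on $[\frac{1+p}2,1]$, i.e. bimodal with the stated extrema, proving \ref{prop:f_max_1}. For $p\in(0,\frac15)$ the right derivative $\frac{1-5p}{1-p}$ is positive, creating a corner local minimum at $x=p$ and reviving $x_p=\frac{1+p}6$ as a genuine interior local maximum, giving \ref{prop:f_max_2}. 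The only delicate point, which I would handle explicitly, is the boundary value $p=\frac15$: there $\frac{1+p}6=p$ and $\updmap_{max}'(p^+)=0$, so the extra critical point degenerates into the junction without reversing the decreasing trend, and bimodality is preserved — correctly placing $p=\frac15$ in the first case.
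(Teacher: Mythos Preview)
Your proposal is correct and follows essentially the same approach as the paper: substitute the parameters to reduce $\updmap_{max}$ to a piecewise cubic depending only on $p$, locate the critical points on each branch by solving the resulting quadratics (obtaining $\frac{p}{2}$, $\frac{4+p}{6}$ on the left and $\frac{1+p}{6}$, $\frac{1+p}{2}$ on the right), and then decide which of these fall inside the relevant domain. The paper argues monotonicity by determining the sign of $\updmap'_{max}$ on each subinterval directly, whereas you classify the critical points via the second derivative and use the one-sided derivatives $\frac{5p-4}{p}$, $\frac{1-5p}{1-p}$ at the junction $x=p$; both routes give the same monotonicity pattern and the same bifurcation at $p=\frac{1}{5}$. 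Your explicit treatment of the boundary value $p=\frac{1}{5}$ and of the corner at $x=p$ is a nice touch that the paper leaves implicit.
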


\begin{proof}
At first observe that for $x\in(0,p)$ the map $\updmap_{max}$ takes the form
$$
\updmap_{max}(x)
=x+\frac{4}{p^2}\left(x^3-px^2-x^2+px \right), 
$$
so
$$
\updmap'_{max}(x)=1+\frac{4}{p^2}\left(3x^2-2px-2x+p \right).
$$
Hence
$$
\begin{aligned}
	\updmap'_{max}(x)<0&\iff 12x^2-8(p+1)x+p(p+4)<0\\
	&\iff \frac{p}{2}<x<\frac{4+p}{6}.
\end{aligned}
$$
Since $p\in(0,\frac{1}{2}]$, the following inequalities hold 
$$
0<\frac{p}{2}<p<\frac{4+p}{6}<1.
$$
Therefore, the map $\updmap_{max}$ is increasing on $(0,\stratcl)$, decreasing on $(\stratcl,p)$ and has a local maximum at $\stratcl$. Similarly, for $x\in(p,1)$ the map $\updmap_{max}$ is given by
$$
\updmap_{max}(x)
=x+\frac{4}{(1-p)^2}\left(x^3-px^2-x^2+px \right),
$$
thus
$$
\updmap'_{max}(x)=1+\frac{4}{(1-p)^2}\left(3x^2-2px-2x+p \right).
$$
Next, note that
$$
\begin{aligned}
	\updmap'_{max}(x)<0&\iff 12x^2-8(p+1)x+(p+1)^2<0\\
	&\iff\frac{p+1}{6}<x<\frac{p+1}{2}\\
	&\iff x_p<x<\stratcr.
\end{aligned}
$$
Let us consider two cases
\begin{itemize}
	\item if $p\in[\frac{1}{5},\frac{1}{2}]$, then $x_p\leq p<\stratcr$ and the map $\updmap_{max}$ is decreasing on $(p,\stratcr)$. Therefore, by the continuity, it is decreasing on the whole interval $(\stratcl,\stratcr)$.
	\item if $p\in(0,\frac{1}{5})$, then $p<x_p<\stratcr$, the map $\updmap_{max}$ is increasing on $(p,x_p)$ and decreasing on $(x_p,\stratcr)$. In particular, $x_p$ is a local maximum of $\updmap_{max}$.
\end{itemize}
Observe that in both cases $\updmap_{max}$ is increasing on $(\stratcr,1)$ and has a local minimum at $\stratcr$. 

In order to prove the equality $\updmap_{max}([0,1])=[0,1]$, recall that $\updmap_{max}(0)=0$, $\updmap_{max}(1)=1$ and 
$$
\begin{aligned}
x\in[0,p]&\implies \updmap_{max}(x)\geq x,\\
x\in[p,1]&\implies \updmap_{max}(x)\leq x.
\end{aligned}
$$
Hence, it follows from the above considerations that $\updmap_{max}([0,1])=[0,1]$ if and only if $\updmap_{max}(\stratcl)\leq1$ and $\updmap_{max}(\stratcr)\geq0$. We obtain
$$
\updmap_{max}(\stratcl)=\updmap_{max}\left(\frac{p}{2}\right)=\frac{p}{2}\left(1+\frac{4}{p^2}\left(1-\frac{p}{2}\right) \left( p-\frac{p}{2}\right) \right)=1
$$
and
$$
\updmap_{max}(\stratcr)=\updmap_{max}\left(\frac{p+1}{2}\right)=\frac{p+1}{2}\left(1-\frac{4}{(1-p)^2}\left( 1-\frac{p+1}{2}\right) \left( \frac{p+1}{2}-p\right) \right)=0,
$$
which completes the proof.
\end{proof}

\begin{proof}[Proof of Lemma \ref{cor:PPI_interval_map}]
Let $\updmap_{max}$ be the map from \eqref{eq:PPI_map_pert} defined by the parameters $(\eta,\xi,\step):=\left( \frac{4p}{(1-p)^2(b-d)},\frac{4}{p(b-d)},1 \right)$.
First, from Lemma \ref{prop:f_max} we have that $\updmap_{max}([0,1])=[0,1]$.
Second, it follows directly from~\eqref{eq:PPI_map_pert} that for any $(\eta,\xi,\step)\in\Delta_p$ the map $\updmap$ is continuous on the interval $[0,1]$ and differentiable for all points $x\in(0,1)$ except (at most) the equilibrium $\eq$. Moreover, $\updmap(0)=0$, $\updmap(1)=1$, $\updmap(p)=p$ and the following inequalities hold: 
\begin{equation}\label{eq:PPI_f_max_ineq}
\begin{aligned}
	&x\leq \updmap(x)\leq \updmap_{max}(x),\quad\text{for}\quad x\in[0,p]\\
	&x\geq \updmap(x)\geq \updmap_{max}(x),\quad\text{for}\quad x\in[p,1].
\end{aligned}
\end{equation}
The equality $\updmap_{max}([0,1])=[0,1]$ together with the inequalities~\eqref{eq:PPI_f_max_ineq} compete the proof.
\end{proof}

\begin{proof}[Proof of Proposition \ref{Thm2:step2}]
    Let $\game$ be a population game defined by~\eqref{eq:game} with the interior Nash equilibrium $p\in(0,\frac{1}{2})$ and denote
    $$
    \step_1:=\frac{1}{p+1},\quad\step_2:=\frac{1}{2-p},\quad\step_3:=\frac{1}{8}\left(p+3+ \sqrt{\frac{-p^3-4p^2-13p+34}{2-p}}\right).
    $$
    Since $p\in(0,\frac{1}{2})$, simple calculations give $\step_2\leq\step_1$ and
    \begin{equation}\label{step_p}
    \step_p:=\max\left\lbrace\step_1,\step_2,\step_3\right\rbrace=\max\left\lbrace\step_1,\step_3  \right\rbrace<1.
    \end{equation}
    Let $\updmap$ be a map of the form~\eqref{eq:PPI_map_pert} that describes the game dynamics of $\game$ introduced by the perturbed PPI protocol~\eqref{eq:PPI_cond_imit_rates_pert}. Consider the points $\stratcl=\frac{p}{2}$ and $\stratcr=\frac{p+1}{2}$ from Lemma~\ref{prop:f_max} (i.e. the local maximum and minimum of the map $\updmap_{max}$, respectively). Recall that
 	$$
 	0<\stratcl<p<\stratcr<1.
 	$$
    We will show that for the parameters $(\eta,\xi) = \left( \frac{4p}{(1-p)^2(b-d)},\frac{4}{p(b-d)} \right)$ and time step size $\step\in(\step_p,1]$ there exists a point $x\in(\stratcl,\stratcr)$ such that $\updmap(x)<x<\updmap^3(x)$. First, observe that it is sufficient to show that the points $\stratcl$ and $\stratcr$ satisfy assumptions of Proposition~\ref{lem:chaos_cond}. We will show that
	 $\updmap(\stratcr)<\stratcl$, $\updmap(\stratcl)>\stratcr$,
		and $\updmap^2(\stratcl)>\stratcr$.
	Thus, the rest of the proof is devoted to showing that these inequalities  hold. It follows from $\step>\step_1$ and $\step>\step_2$ that
	$$
	\begin{aligned}
		\updmap(\stratcr)&=\frac{p+1}{2}\left(1-\frac{4\step}{(1-p)^2}\left( 1-\frac{p+1}{2}\right) \left( \frac{p+1}{2}-p\right) \right)=\frac{p+1}{2}\left(1-\step\right)\\
		&<\frac{p+1}{2}\left(1-\step_1\right)=\frac{p+1}{2}\left(1-\frac{1}{p+1}\right)=\frac{p}{2}=\stratcl,\\
		\updmap(\stratcl)&=\frac{p}{2}\left(1+\frac{4\step}{p^2}\left(1-\frac{p}{2}\right) \left( p-\frac{p}{2}\right) \right) =\frac{p}{2}+\step\left(1-\frac{p}{2} \right)\\
		&>\frac{p}{2}+\step_2\left(1-\frac{p}{2} \right)=\frac{p}{2}+\frac{1}{2-p}\left(1-\frac{p}{2} \right)=\frac{p+1}{2}=\stratcr.
	\end{aligned}
	$$
	Next, note that for $x\in(p,1)$ we have
	$$
	\updmap'(x)=1+\frac{4\step}{(1-p)^2}\left(3x^2-2px-2x+p \right), 
	$$
	so
	$$
	K:=\sup_{x\in(p,1)}\updmap'(x)=1+\frac{4\step}{(1-p)^2}\sup_{x\in(p,1)}\left(3x^2-2px-2x+p \right)=1+\frac{4\step}{1-p}.
	$$
	Since $\updmap(\stratcl)\in(\stratcr,1]\subset(p,1]$ and $\updmap(1)=1$, we obtain
	$$
	\left|1-\updmap^2(\stratcl)\right|=\left|\updmap(1)-\updmap^2(\stratcl)\right|\leq K\left|1-\updmap(\stratcl) \right|.
	$$
	Furthermore, the following equivalences hold
	$$
	\begin{aligned}
		K\left|1-\updmap(\stratcl) \right|<\left|1-\stratcr \right| &\iff\left(1+\frac{4\step}{1-p} \right)\left(1-\frac{p}{2}-\step\left(1-\frac{p}{2} \right)\right)< 1-\frac{p+1}{2}\\
		&\iff\frac{1-p+4\step}{1-p}\cdot\frac{2-p}{2}\left(1-\step\right)< \frac{1-p}{2}\\
		&\iff4\step^2-(p+3)\step-\frac{1-p}{2-p} >0.
	\end{aligned}
	$$
    Hence the inequality $|1-\updmap^2(\stratcl)|<|1-\stratcr|$ is satisfied for every
	$$
	\step>\frac{1}{8}\left(p+3+ \sqrt{\frac{-p^3-4p^2-13p+34}{2-p}}\right)=\step_3.
	$$
	Finally, let us note that the inequality $|1-\updmap^2(\stratcl)|<|1-\stratcr|$ implies $\stratcr<\updmap^2(\stratcl)$. Applying the results of Section \ref{sec:discrete} completes the proof.

\end{proof}

\begin{proof}[Proof of Proposition \ref{Thm2:step3}]
Let $\widetilde{\mathcal{G}}\equiv\widetilde{\mathcal{G}}(\mathcal{A},\widetilde{\payv})$ be a game defined by \eqref{eq:game} with $\widetilde{\eq}\in(\frac{1}{2},1)$ and denote $\eq:=1-\widetilde{\eq}\in(0,\frac{1}{2})$. Then the interior Nash equilibrium of the game $\game\equiv\gamefull$ given by the payoff matrix
\begin{equation}\label{game:conjugacy}
\begin{array}{l|cc}
	&A	&B\\
\hline
A	&\gaind		&\gainc\\
B	&\gainb	& \gaina
\end{array}  
\end{equation}
is equal to $\eq$. Moreover, for any triplet $(\eta,\xi,\step)\in (0,\infty)^3$, we have $(\eta,\xi,\step)\in\Delta_{\eq}$ if and only if $(\xi,\eta,\step)\in\Delta_{\widetilde{\eq}}$. 

Let $\iswitch_{AB}$ and $\iswitch_{BA}$ be the conditional imitation rates of the game $\game$ given by~\eqref{eq:PPI_cond_imit_rates_pert}. From  Lemma~\ref{cor:PPI_interval_map} it follows that $\updmap$ in \eqref{eq:PPI_map_pert} is an interval map for any parameters $(\eta,\xi,\step)\in\Delta_{\eq}$. Furthermore, the conditional imitation rates $\widetilde{\iswitch}_{AB}$ and $\widetilde{\iswitch}_{BA}$ of the game $\widetilde{\game}$ defined by \eqref{PPIiswitch-tilda} satisfy the condition~\eqref{eq:imit_symmetry}. So, the corresponding maps $\updmap$ and $\widetilde{\updmap}$ satisfy the assumptions of Lemma~\ref{lem:symmetry}. Therefore, $\widetilde{\updmap}$ is also an interval map, and the dynamical systems $([0,1],\updmap)$ and $([0,1],\widetilde{\updmap})$ are topologically conjugate. From Proposition \ref{Thm2:step2} we conclude that there exists $\step_{\widetilde{p}} \in (0,1)$ such that for $\xi = \frac{4\widetilde{p}}{(1-\widetilde{p})^2(b-d)}$, $\eta = \frac{4}{\widetilde{p}(b-d)}$ and $\step\in(\step_{\widetilde{p}},1]$ the map $\widetilde{\updmap}$ is Li-Yorke chaotic and has periodic orbit of any period.
\end{proof}

\begin{proof}[Proof of Lemma \ref{prop:dynamics_PPI}]
For $p\in(0,\frac{1}{2})$ we denote
\[
 \step_4:=\frac{p}{2(1-p)} \qquad \text{ and } \qquad \step_5:=\frac{1-p}{2p}.
\]
	Let us compute the one-sided derivatives of the map $\updmap$ at the point $p$:
		$$
	\begin{aligned}
         \updmap'_-(p)&=\lim\limits_{x\to p^-}\frac{\updmap(x)-\updmap(p)}{x-p}=\lim\limits_{x\to p^-}\frac{x\left(1+\frac{4\step}{p^2}(1-x)(p-x)\right)-p}{x-p}\\
		&=\lim\limits_{x\to p^-}\left(1-\frac{4\step}{p^2}\,x(1-x)\right)=1-\frac{4\step}{p}(1-p),\\
		\updmap'_+(p)&=\lim\limits_{x\to p^+}\frac{\updmap(x)-\updmap(p)}{x-p}=\lim\limits_{x\to p^+}\frac{x\left(1-\frac{4\step}{(1-p)^2}(1-x)(x-p)\right)-p}{x-p}\\
		&=\lim\limits_{x\to p^+}\left(1-\frac{4\step}{(1-p)^2}\,x(1-x)\right)=1-\frac{4\step p}{1-p}.
	\end{aligned}
	$$
        Note that 
        $$
        \step_4 = \frac{p}{2(1-p)}<1\iff p<\frac{2}{3}
        $$
        and for $\step>\step_4$, we obtain
        $$
        \updmap'_-(p)=1-\frac{4\step}{p}(1-p)<1-\frac{4\step_4}{p}(1-p)=1-\frac{4}{p}\cdot\frac{p(1-p)}{2(1-p)}=-1.
        $$
	Next, observe that
	$$
	\updmap'_+(p)<-1\iff1-\frac{4\step p}{1-p}<-1\iff2(1-p)<4\step p\iff\step>\frac{1-p}{2p}
	$$
	and
	$$
	\step_5 = \frac{1-p}{2p}<1\iff p>\frac{1}{3}.
	$$
	Hence, for $p\in(\frac{1}{3},\frac{1}{2})$ the interval $(\step_5,1]$ is well-defined and $\step>\step_5$ implies $\updmap'_+(p)<-1$. On the other hand, for $p\in(0,\frac{1}{3}]$ we get the inequalities
	$$
	0<\frac{4\step p}{1-p}\leq\frac{4p}{1-p}\leq2.
	$$
	So $|\updmap'_+(p)|\leq1$ for every parameter $\step\in(0,1]$. Thus, the proof is complete.
\end{proof}

\begin{proof}[Proof of Corollary \ref{cor:perturbed_PPI_repelling}]
Let $\game$ be a population game defined by~\eqref{eq:game} with the interior Nash equilibrium $p\in(0,\frac{1}{2})$ and let $\updmap$ be the map from \eqref{eq:PPI_map_pert} with $(\eta,\xi) = \left( \frac{4p}{(1-p)^2(b-d)},\frac{4}{p(b-d)} \right)$.
Recall that
$$
\step_1=\frac{1}{p+1},\quad\step_2=\frac{1}{2-p},\quad\step_3=\frac{1}{8}\left(p+3+ \sqrt{\frac{-p^3-4p^2-13p+34}{2-p}}\right)
$$
and
$$
\step_4=\frac{p}{2(1-p)},\quad\step_5=\frac{1-p}{2p}.
$$
Observe that for any point $p\in(\frac{1}{3},\frac{1}{2})$ we have $\max\{\step_1,\step_2,\step_3,\step_4,\step_5\}=\max\{\step_3,\step_5\}$. Therefore, it follows from the proof of Theorem~\ref{thm:perturbedPPIchaos}, that for any $\step>\max\{\step_3,\step_5\}$ the map $\updmap$ is Li-Yorke chaotic and has a periodic point of every period. From Lemma~\ref{prop:dynamics_PPI} we conclude that the point $p$ is repelling. By the conjugacy argument we complete the proof for $\eq\in (\frac 13,\frac 23)$.
\end{proof}

\section{Proofs of results from Section~\ref{sec:thm3}}\label{app:proof-thm3}

\begin{lemma}\label{prop:f_max_truncated}
Assume that $\eq\in(0,\frac12)$ and let $\updmap^{*}_{max}$ be the map of form \eqref{eq:PPI_truncated_map2} defined by the parameters $\gamma=\eq+\frac{\eq^2}{2}$ and $(\eta,\xi,\step)=\left(\frac{4}{\eq(2-2\eq-\eq^2)(\gainb-\gaind)},\frac{4}{\eq(\gainb-\gaind)},1\right)$. Then the image of the map $\updmap^{*}_{max}$ is equal to $[0,1]$. Moreover, $\updmap^{*}_{max}\colon[0,1]\to[0,1]$ is a bimodal map with the local maximum at $\stratcl=\frac{\eq}{2}$ and the local minimum at $\gamma$.
\end{lemma}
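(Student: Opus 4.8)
The plan is to substitute the prescribed parameters into \eqref{eq:PPI_truncated_map2} and study the three branches in turn. With $\step=1$, $\xi=\frac{4}{\eq(\gainb-\gaind)}$, $\eta=\frac{4}{\eq(2-2\eq-\eq^2)(\gainb-\gaind)}$ and $\gamma=\eq+\frac{\eq^2}{2}$, the prefactor $\step\xi\frac{\gainb-\gaind}{\eq}$ collapses to $\frac{4}{\eq^2}$ and $\step\eta\frac{\gainb-\gaind}{\eq}$ to $L:=\frac{4}{\eq^2(2-2\eq-\eq^2)}$, so that
\[
\updmap^{*}_{max}(x)=\begin{cases}
x\bigl(1+\tfrac{4}{\eq^2}(1-x)(\eq-x)\bigr), &x\in[0,\eq),\\
x\bigl(1-L(1-x)(x-\eq)\bigr), &x\in[\eq,\gamma),\\
x\bigl(1-L\tfrac{\eq^2}{2}(1-x)\bigr), &x\in[\gamma,1].
\end{cases}
\]
I would first record that $\updmap^{*}_{max}$ is continuous (the two interior junctions match because the relevant factor vanishes at $\eq$ and because $\gamma-\eq=\frac{\eq^2}{2}$) and that $\updmap^{*}_{max}(0)=0$, $\updmap^{*}_{max}(1)=1$. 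The decisive observation is that the first branch is \emph{identical} to the first branch of the map $\updmap_{max}$ from Lemma~\ref{prop:f_max}, since the value of $\xi$ is the same; hence I may quote from there that $\updmap^{*}_{max}$ is increasing on $(0,\frac{\eq}{2})$, decreasing on $(\frac{\eq}{2},\eq)$, has a local maximum at $\stratcl=\frac{\eq}{2}$, and satisfies $\updmap^{*}_{max}(\frac{\eq}{2})=1$.

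For the third branch, $\updmap^{*}_{max}(x)=x-L\frac{\eq^2}{2}x(1-x)$ has affine derivative $1-K+2Kx$ with $K:=L\frac{\eq^2}{2}=\frac{2}{2-2\eq-\eq^2}$, which is increasing in $x$; evaluating at the left endpoint $\gamma$ and simplifying gives $\frac{\eq(2+\eq)}{2-2\eq-\eq^2}>0$, so this branch is increasing on $[\gamma,1]$ and $\updmap^{*}_{max}(1)=1$. The crux is the middle branch, where the derivative equals $1+L\bigl(3x^2-2(1+\eq)x+\eq\bigr)$, a convex (upward) parabola in $x$. The plan is to show it is nonpositive at both endpoints $x=\eq$ and $x=\gamma$ and then invoke convexity: an upward parabola lies below the chord through its endpoint values, so nonpositivity at the endpoints forces the derivative to be $\le0$ on all of $[\eq,\gamma]$, making the branch decreasing. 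At $x=\eq$ the derivative equals $1-L\eq(1-\eq)$, and its nonpositivity reduces to $\eq^3+2\eq^2-6\eq+4\ge0$; at $x=\gamma$ it reduces, after clearing the positive denominator, to $\eq^3+3\eq^2+\eq-2\le0$. Both are routine to verify for every $\eq\in(0,\frac12)$ by checking the monotonicity of the two cubics on that interval.

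Combining the branches, $\updmap^{*}_{max}$ increases on $(0,\frac{\eq}{2})$, decreases on $(\frac{\eq}{2},\gamma)$ and increases on $(\gamma,1)$, so it is bimodal with local maximum at $\stratcl=\frac{\eq}{2}$ and local minimum at $\gamma$. For the image, the parameters have been tuned precisely so that the two critical values hit the boundary: the identity $\updmap^{*}_{max}(\frac{\eq}{2})=1$ is already in hand, while a direct substitution, using $2-2\eq-\eq^2=2\bigl(1-\eq-\frac{\eq^2}{2}\bigr)$, gives $\updmap^{*}_{max}(\gamma)=0$. Together with $\updmap^{*}_{max}(0)=0$, $\updmap^{*}_{max}(1)=1$, continuity and the monotonicity pattern, the map runs monotonically through $0\to1\to0\to1$ on its successive branches, whence $\updmap^{*}_{max}([0,1])=[0,1]$. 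I expect the main obstacle to be the middle-branch monotonicity: the two endpoint sign computations are exactly where the special choices of $\eta$ and $\gamma$ are used, and the convexity reduction is what keeps the argument down to two scalar polynomial inequalities rather than a full analysis of a cubic derivative.
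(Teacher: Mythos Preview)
Your proof is correct and follows essentially the same three-branch analysis as the paper: quote Lemma~\ref{prop:f_max} on $[0,\eq)$, show the middle branch is decreasing on $(\eq,\gamma)$, show the third branch is increasing on $(\gamma,1)$, and verify the critical values $\updmap^{*}_{max}(\stratcl)=1$ and $\updmap^{*}_{max}(\gamma)=0$. The only real difference is in the middle branch: the paper computes the two roots $x_0,x_1$ of the quadratic $12x^2-8(\eq+1)x-\eq(\eq^3+2\eq^2-2\eq-4)$ and asserts $(\eq,\gamma)\subset(x_0,x_1)$, whereas you evaluate the (convex) quadratic derivative at the two endpoints $\eq$ and $\gamma$, reduce the sign conditions to the cubic inequalities $\eq^3+2\eq^2-6\eq+4\ge0$ and $\eq^3+3\eq^2+\eq-2\le0$, and check these directly on $(0,\tfrac12)$. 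Your variant avoids the explicit (and somewhat opaque) root expressions and replaces the paper's unproved inclusion claim with two transparent scalar checks; the paper's route, on the other hand, identifies the full interval on which the derivative is negative, which is slightly more informative but not needed here.
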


\begin{proof}
	
	Since for any $x\in[0,p]$ we have $\updmap^{*}_{max}(x)=\updmap_{max}(x)$, it follows from the proof of Lemma~\ref{prop:f_max} that the map $\updmap^{*}_{max}$ is increasing on $(0,\stratcl)$, decreasing on $(\stratcl,p)$ and has a local maximum at $\stratcl$ with $\updmap^{*}_{max}(\stratcl)=1$. Therefore it suffices to investigate the behavior of $\updmap^{*}_{max}$ to the right from the fixed point $p$. 
	
	Firstly, let $x\in(p,\gamma)$ and observe that
	$$
	\begin{aligned}
	\updmap^{*}_{max}(x)
	&=x+\frac{4}{p^2(2-2p-p^2)}\left(x^3-px^2-x^2+px \right).
	\end{aligned}
	$$
	Thus,
	$$
	(\updmap^{*}_{max})'(x)=1+\frac{4}{p^2(2-2p-p^2)}\left(3x^2-2px-2x+p \right),
	$$
	which leads to the following equivalence
	$$
	(\updmap^{*}_{max})'(x)<0\iff 12x^2-8(p+1)x-p(p^3+2p^2-2p-4)<0.
	$$
    The roots of the LHS of the last inequality are given by
	\[x_0=\frac{2p+2-\sqrt{3p^4+6p^3-2p^2-4p+4}}{6}, \;\; x_1=\frac{2p+2+\sqrt{3p^4+6p^3-2p^2-4p+4}}{6}.
	\]
	So, we conclude that $(\updmap^{*}_{max})'(x)<0$ if and only if $x_0 <x<x_1$. Note that $p\in(0,\frac{1}{2})$ implies the inclusion $(p,\gamma)\subset(x_0,x_1)$. So $(\updmap^{*}_{max})'(x)<0$ for every point $x\in(p,\gamma)$. Since the map $\updmap^{*}_{max}$ is decreasing on the intervals $(\stratcl,p)$ and $(p,\gamma)$, then by continuity it is decreasing on the interval $(\stratcl,\gamma)$.
	
	Next, let $x\in(\gamma,1)$. In this case the map $\updmap^{*}_{max}$ takes the form
	$$
	\updmap^{*}_{max}(x)=x\left(1-\frac{4}{p^2(2-2p-p^2)}\cdot\frac{p^2}{2}(1-x)\right)=x+\frac{2}{2-2p-p^2}(x^2-x).
	$$
	So
	$$
	(\updmap^{*}_{max})'(x)=1+\frac{2}{2-2p-p^2}(2x-1)
	$$
	and consequently
	$$
	(\updmap^{*}_{max})'(x)>0\iff4x-2p-p^2>0\iff x>\frac{p}{2}+\frac{p^2}{4}=\frac{\gamma}{2}.
	$$
	Thus the map $\updmap^{*}_{max}$ is increasing on the interval $(\gamma,1)$ and has a local minimum at $\gamma$. Finally let us note that
	$$
	\updmap^{*}_{max}(\gamma)=\gamma\left(1-\frac{2}{2-2p-p^2}(1-\gamma)\right)=\left(p+\frac{p^2}{2}\right) \left(1-\frac{2}{2-2p-p^2}\left(1-p-\frac{p^2}{2}\right)\right)=0.
	$$
	Recall that $\updmap^{*}_{max}(x)\geq x$ for $x\in[0,p]$, $\updmap_{max}(x)\leq x$ for $x\in[p,1]$ and $\updmap^{*}_{max}(0)=0$, $\updmap^{*}_{max}(1)=1$. Therefore we have shown that $\updmap^{*}_{max}([0,1])=[0,1]$ and $\updmap^{*}_{max}$ is a bimodal map with local maximum and minimum at $\stratcl$ and $\gamma$, respectively.
\end{proof}

\begin{proof}[Proof of Lemma \ref{cor:f_maps_truncated}]


Let $\updmap^{*}_{max}$ be the map from Lemma~\ref{prop:f_max_truncated}. Observe that for $\gamma=\eq+\frac{\eq^2}{2}$ and any triplet $(\eta,\xi,\step)\in\Delta_p^*$ the map $\updmap^{*}$ of the form \eqref{eq:PPI_truncated_map2} satisfy the inequalities analogous to~\eqref{eq:PPI_f_max_ineq}, i.e.
\begin{equation}\label{eq:PPI_f_max_trun_ineq}
	\begin{aligned}
		&x\leq \updmap^{*}(x)\leq \updmap^{*}_{max}(x),\quad\text{for}\quad x\in[0,p]\\
		&x\geq \updmap^{*}(x)\geq \updmap^{*}_{max}(x),\quad\text{for}\quad x\in[p,1].
	\end{aligned}
\end{equation}
As a simple consequence of Lemma~\ref{prop:f_max_truncated} and inequalities~\eqref{eq:PPI_f_max_trun_ineq} we get $\updmap^{*}([0,1])=[0,1]$.
\end{proof}

\begin{proof}[Proof of Proposition \ref{Thm3:step2}]
Since the proof of Proposition~\ref{Thm3:step2} proceeds similarly to the proof of Proposition~\ref{Thm2:step2} and Lemma \ref{prop:dynamics_PPI}, we will skip all calculations and present only its outline.

Let $\game$ be a population game defined by~\eqref{eq:game} with the interior Nash equilibrium $p\in(0,\frac{1}{2})$. We denote
$$
\step^*_1:=\frac{p+1}{p+2},\quad\step^*_2:=\frac{p(p+1)}{2-p},\quad\step^*_3:=\frac{1}{4}\left(p^2+2p+ \sqrt{\frac{p\left(p^4+10p^3+20p^2-8p-16\right) }{p-2}}\right)
$$
and
$$
\step^*_4:=\step_4=\frac{p}{2(1-p)},\quad \step^*_5:=\frac{p(2-2p-p^2)}{2(1-p)}.
$$
Since $p\in(0,\frac{1}{2})$, we obtain the inequalities $\step^*_i<\step^*_1$ for $i=2,4,5$. Therefore
$$
\step^*_p:=\max\left\lbrace\step^*_1,\step^*_2,\step^*_3,\step^*_4,\step^*_5  \right\rbrace=\max\left\lbrace\step^*_1,\step^*_3  \right\rbrace.
$$
Note that $\step^*_p<1$.

Let $\updmap^{*}$ be a map of the form~\eqref{eq:PPI_truncated_map2} defined by the parameters $\gamma=\eq+\frac{\eq^2}{2}$, $\eta = \frac{4}{\eq(2-2\eq-\eq^2)(\gainb-\gaind)}$, $\xi = \frac{4}{\eq(\gainb-\gaind)}$ and $\step\in(\step^*_p,1]$. Consider the points $\stratcl=\frac{p}{2}$ and $\gamma$ (i.e. the local maximum and minimum of the map $\updmap^{*}_{max}$ from Lemma~\ref{prop:f_max_truncated}). Clearly, we have
 	$$
 	0<\stratcl<p<\gamma<1.
 	$$
Similarly as in the proof of Proposition~\ref{Thm2:step2}, the condition $\step>\max\{\step^*_1,\step^*_2\}$ ensures that the inequalities $\updmap^{*}(\gamma)<\stratcl$ and $\updmap^{*}(\stratcl)>\gamma$ hold. Furthermore, for any $\step>\step^*_3$ we obtain
$$
    \left|1-(\updmap^{*})^2(\stratcl)\right|=\left|\updmap^{*}(1)-(\updmap^{*})^2(\stratcl)\right|\leq L\left|1-\updmap^{*}(\stratcl) \right|<\left|1-\gamma \right|,
$$
where
$$
    L:=\sup_{x\in(\gamma,1)}(\updmap^{*})'(x)=1+\frac{2\step}{2-2p-p^2}.
$$
Hence $(\updmap^{*})^2(\stratcl)>\gamma$. So the points $\stratcl$ and $\gamma$ satisfy assumptions of Proposition~\ref{lem:chaos_cond}. Consequently, the map $\updmap^{*}$ is Li-Yorke chaotic and has a periodic point of any period $n\in\mathbb{N}$ (see Section \ref{sec:discrete}).

    Finally, let us observe that the one-sided derivatives at the point $p$ satisfy the inequalities $(\updmap^{*})'_-(p)<-1$ and $(\updmap^{*})'_+(p)<-1$ for any $\step>\max\{\step^*_4,\step^*_5\}$ (cf. the proof of Lemma \ref{prop:dynamics_PPI}), hence the point $p$ is repelling for $\updmap^{*}$. 
\end{proof}

\begin{proof}[Proof of Proposition \ref{Thm3:step3}]
Let $\widetilde{\mathcal{G}}\equiv\widetilde{\mathcal{G}}(\mathcal{A},\widetilde{\payv})$ and $\game\equiv\gamefull$ be as in the proof of Proposition~\ref{Thm2:step3}. Denote $\gamma:=\eq+\frac{\eq^2}{2}$. Then for $\widetilde{\gamma}:=1-\gamma$ and any triplet $(\eta,\xi,\step)\in(0,\infty)^3$, the condition $(\eta,\xi,\step)\in\Delta^{*}_p$ is equivalent to $(\xi,\eta,\step)\in\Gamma^{*}_{\widetilde{p}}$.

Let $\iswitch^*_{AB}$, $\iswitch^*_{BA}$ and $\widetilde{\iswitch}^{\,*}_{AB}$, $\widetilde{\iswitch}^{\,*}_{BA}$ be the conditional imitation rates of the games $\game$ and $\widetilde{\game}$ given by~\eqref{eq:truncated_cond_imit_rate2} and \eqref{eq:thm3_conjugacy}, respectively. Then, application of Lemma~\ref{cor:f_maps_truncated}, Lemma~\ref{lem:symmetry} and Proposition~\ref{Thm3:step2} completes the proof.
\end{proof}
\end{document}